\newtheorem{theorem}{Theorem}
\newtheorem{prop}[theorem]{Proposition}
\newtheorem{lemma}[theorem]{Lemma}
\theoremstyle{definition}
\newtheorem{defn}{Definition}
\newtheorem{remark}[defn]{Remark}
\newtheorem{remarks}[defn]{Remarks}
\newcommand{\var}   {{\rm Var} \mspace{1mu}}
\newcommand{\cov}   {{\rm Cov} \mspace{1mu}}
\newcommand{\eqlaw} {\stackrel{\mathcal{D}}{=}}
\begin{document}

\title{Opinion dynamics on directed complex networks}

\author{Nicolas Fraiman \and Tzu-Chi Lin \and Mariana Olvera-Cravioto}

\maketitle
\begin{abstract}
We propose and analyze a mathematical model for the evolution of opinions on directed complex networks. Our model generalizes the popular DeGroot and Friedkin-Johnsen models by allowing vertices to have attributes that may influence the opinion dynamics. We start by establishing sufficient conditions for the existence of a stationary opinion distribution on any fixed graph, and then provide an increasingly detailed characterization of its behavior by considering a sequence of directed random graphs having a local weak limit. Our most explicit results are obtained for graph sequences whose local weak limit is a marked Galton-Watson tree, in which case our model can be used to explain a variety of phenomena, e.g., conditions under which consensus can be achieved, mechanisms in which opinions can become polarized, and the effect of disruptive stubborn agents on the formation of opinions. 
\vspace{5mm}

\noindent {\em Keywords: } opinion dynamics, social networks, consensus/polarization, stubborn agents, stochastic process, directed graphs. 

\end{abstract}

\section{Introduction}\label{sec:intro}

Today's era of political polarization has fueled great interest in the development of models for the evolution of opinions on social networks. A topic that has been of interest to social scientists and mathematicians for a long time, now requires a new generation of models that can move away from the traditional notion of consensus to the more complex phenomenon of polarization. The fast growth that the field of complex networks has experienced in the last couple of decades can now be used to provide tractable models that incorporate both the random nature of human interactions and the complex structure of social networks, yielding models that have high explanatory power in terms of commonly accepted features such as personal relationships, individual traits, and the influence of the media. 

This paper presents a model for the evolution of opinions on a social network where individuals are periodically influenced by their acquaintances in the network, by the signals they receive from the media, and by their own personal beliefs. It is worth pointing out that we think of a social network broadly, in a way that includes both person to person relations as well as virtual connections (e.g., Facebook, X). The goal of our model is to explain how opinions evolve over time and reach a stable distribution, which may look like consensus in some cases, but that may also exhibit a high degree of polarization in others. 

Our approach starts with the definition of a stochastic process on a finite directed graph, where we can give simple and natural conditions for the existence of a stationary distribution for the opinions held by individuals in the network. On small graphs, this stationary distribution would naturally depend to great extent on the specific graph on which the opinions are evolving. However, today's social networks are extremely large, and random phenomena on large complex networks can be analyzed using techniques from random graph theory. In the context of our opinion model, this means that we can study in great detail the stationary distribution of the opinion of a typical individual, or equivalently, the proportion of individuals having a given opinion, in a large class of random graph models. Finally, our last set of results prove that for a specific class of random graph models, the stationary opinion can be shown to achieve high levels of consensus under some conditions, and high levels of polarization under others. Moreover, these conditions depend on the types of signals the media broadcasts, the strength of the influence of close neighbors, and the propensity of individuals to choose to listen only to opinions that agree with their own beliefs, i.e., the cognitive phenomenon known as {\em confirmation bias} \cite{nickerson1998confirmation}. A lengthy discussion of related opinion dynamics models is given in Section~\ref{sec:literature}.

The remainder of the paper is organized as follows:
Section~\ref{sec:model} includes the description of the opinion model on a fixed graph, and Section~\ref{sec:stationary_bahavior} discusses its stationary distribution.  Section~\ref{sec:typical_bahavior} introduces the notion of a typical vertex in a sequence of locally convergent random graphs. Section~\ref{sec:explicit_characterizations} gives an explicit characterization of the typical opinion when the graph's local weak limit is a tree, and then specializes to a marked Galton-Watson tree. Subsections~\ref{sec:consensus} through~\ref{sec:memory} discuss different types of behaviors that the model can capture under various parameter settings. We also include in Section~\ref{sec:literature} a literature review, and Section~\ref{sec:proofs} contains most of the proofs. Finally, the appendix includes additional proofs for the mean and variance formulas in the most general case.
\section{Model Description}\label{sec:model}

Our model for a social network consists of a directed graph where individuals are represented by its vertices and the edges correspond to relationships, e.g., family relatives, friends, co-workers, etc. Since we believe that trust is not always symmetric between any two individuals, these acquaintances are assumed to be directed, with an edge $(i,j)$ meaning that individual $j$ trusts, or listens, to the opinion of individual $i$. In graph terminology, the set of inbound neighbors of vertex $j$ corresponds to the group of people that he/she is willing to listen to, and the in-degree $d_j^-$ of vertex $j$ corresponds to the cardinality of this set.  Symmetrically, vertex $i$ has a set of outbound neighbors who will listen to his/her opinion, and the cardinality of this set is its out-degree $d_i^+$. For a fixed graph, we will use $G(V,E)$ to denote the directed graph having vertex set $V$ and set of directed edges $E$. 

Opinions on the social network are assumed to evolve over time at discrete time-steps. We focus on one topic at a time, which can take values on the continuous interval $[-1, 1]$ (the multi topic model will be considered in a future work). The opinion of individual $i$ at time $k$ will be denoted as $R_i^{(k)}$. Each individual is assumed to have a set of attributes, including: a ``natural" predisposition on the topic being discussed, which we will call the internal opinion, and a set of weights that determine the amount of trust that they give to each of their inbound neighbors. In terms of the graph, these individual characteristics become vertex attributes that remain constant throughout time. In addition, we assume the entire social network is exposed to media signals that may influence their opinions on the topic.  However, what type of media signals each individual chooses to listen to may depend on their vertex attributes, hence allowing us to model a special kind of confirmation bias known as selective exposure. 

We assume that at each time step, each individual in the network listens to the opinions of all its inbound neighbors and weighs them according to the trust he/she has given to  these neighbors. Simultaneously, each individual is exposed to the external signals (e.g. media signal) for that time step, and then proceeds to update their own opinion by taking into account:
\begin{enumerate} \itemsep 1pt \topsep 0pt
\item the weighted average of their inbound neighbors' opinions,
\item the external signal received during that period, and
\item their own opinion from the previous time step.
\end{enumerate}
The resulting recursion can be written as:
\begin{equation} \label{eq:OpinionRec}
R_i^{(k+1)} = \sum_{r=1}^{d_i^-} c(i,r) R_{\ell(i,r)}^{(k)} + W_i^{(k)} + (1-c -d) R_i^{(k)}, \qquad i \in V, \qquad k \geq 0,
\end{equation}
where $\ell(i,r)$ is the vertex label of the $r$th inbound neighbor of individual $i$, $c(i,r)$ is the weight/trust that individual $i$ places on its $r$th inbound neighbor, and $W_i^{(k)}$ is the external signal received by individual $i$ during the $k$th time period; the recursion is initialized with $\{ R_i^{(0)}: i \in V\}$. The neighbor weights are assumed to satisfy:
$$
\sum_{r=1}^{d_i^-} c(i,r) = c \qquad \text{for all } i \in V \text{ with } d_i^- > 0,
$$
and $c \in [0,1)$, $d \in (0,1]$ are parameters satisfying $c+d \leq 1$. Before describing the nature of the external signals $\{ W_i^{(k)}\}$, recall that individuals are allowed to have a set of attributes that may influence what type of external signal they receive. We will use $q_i \in [-1,1]$ to denote the natural propensity or internal opinion of individual $i$. If we consider that individuals with no inbound neighbors are not susceptible to being influenced, we can think of labeling such individuals as {\em stubborn agents}, and include a label of the form $s_i \in \{0,1\}$ as part of their attributes as well. Bots, or artificial individuals whose opinions cannot be influenced but who can influence others can also be modeled using the labels $s_i$. In the sequel we will also consider scenarios where the graph representing the social network is random, in which case we may want to include attributes that can influence an individual's number of inbound/outbound neighbors. For now, we model attributes not directly related to recursion \eqref{eq:OpinionRec} using a generic vector $\mathbf{a}_i$ taking values in some Polish space $\mathcal{S}'$. The full vector of attributes of individual $i$ takes the form:
$$
\mathbf{x}_i = (\mathbf{a}_i, q_i, s_i, d_i^-, c(i,1) 1(d_i^- \geq 1), c(i,2) 1(d_i^- \geq 2), \dots) \in \mathcal{S},
$$
where $\mathcal{S} := \mathcal{S}' \times [-1,1] \times \{0,1\} \times \mathbb{N} \times [0,1]^\infty$.

Now that we have defined the vertex attributes, the external signals $\{ W_i^{(k)}: i \in V, k \geq 1\}$ are assumed to be independent of each other, with the per-individual external signals $\{ W_i^{(k)}: k \geq 1\}$ conditionally i.i.d.~given $\mathbf{x}_i$ for each $i \in V$, and satisfying:
$$
\left| W_i^{(k)} \right| \leq d + c - \sum_{r=1}^{d_i^-} c(i,r).
$$

In the explicit computations in Section~\ref{sec:consensus} onwards, we will use the following form for the external signals:
\begin{equation} \label{eq:MediaSignals}
W_i^{(k)} = q_i \left( c-\sum_{r=1}^{d_i^-} c(i,r) \right) + d Z_i^{(k)},
\end{equation} 
with the random variables $\{Z_i^{(k)}: k \geq 1, i \in V\}$ independent of each other and taking values in $[-1,1]$. We will refer to the $\{Z_i^{(k)}: k \geq 1, i \in V\}$ as the media signals, and point out that the term multiplying $q_i$ is non-zero only for vertices with no inbound neighbors (it replaces the neighbors' influence with the vertex's internal opinion).  

The generality of the external signals as described above allows us great modeling flexibility, as some of the examples below illustrate:
\begin{itemize} \itemsep 1pt \topsep 0pt
\item Choosing the $\{Z_i^{(k)}: k \geq 1, i \in V\}$ to be i.i.d.~random variables independent of everything else, can be interpreted as ``pure noise", or a media signal that is received by all the non-stubborn agents in the network in a similar way, regardless of their natural predispositions. 

\item When $d = 0$, a case we exclude in this work, the model is known as the DeGroot model \cite{degroot1974reaching}. The model with $c+d = 1$, $Z_i^{(k)} = q_i$ for all $k \geq 0$ and all $i \in V$ is known as the Friedkin-Johnsen model \cite{friedkin1990social}. The case where the $\{Z_i^{(k)}: k \geq 1, i \in V\}$ are i.i.d.~and independent of the vertex marks has been studied in \cite{xiao2007distributed,yang2017innovation}.

\item Allowing the distribution of the $\{Z_i^{(k)}: k \geq 1\}$ in \eqref{eq:MediaSignals} to depend on $\mathbf{x}_i$ for each $i \in V$ can result in media signals that are highly correlated with the individual's internal opinion $q_i$, which we use to model selective exposure. 
\item More generally, allowing the  distribution of the $\{Z_i^{(k)}: k \geq 1\}$ in \eqref{eq:MediaSignals} to depend on other parts of the attribute vector $\mathbf{x}_i$, besides the internal opinion $q_i$, can further exploit the individual's characteristics to model a more targeted media signal. 
\end{itemize}

Having now introduced our proposed model for the evolution of opinions on a fixed directed graph $G$, we go on to present our first result regarding the existence of a stationary distribution for the opinions of individuals in the social network. 

\section{Stationary Behavior} \label{sec:stationary_bahavior}

Our opinion model as defined above defines a Markov chain taking values on $[-1,1]^{n}$, where $n = |V|$ is the number of individuals in the social network being modeled, or equivalently, the number of vertices in the graph $G$. Specifically, if we let $\mathbf{R}^{(k)} = (R_1^{(k)}, \dots, R_n^{(k)})$, then the process:
$$
\{ \mathbf{R}^{(k)}: k \geq 0\},
$$
initialized with the vector $\mathbf{R}^{(0)}$, is a Markov chain with time-homogeneous noise vectors $\{ \mathbf{W}^{(k)}: k \geq 1\}$, where $\mathbf{W}^{(k)} = (W_1^{(k)}, \dots, W_n^{(k)})$. 

Hence, the first question one may ask is whether $\{ \mathbf{R}^{(k)}: k \geq 0\}$ has a stationary distribution as $k \to \infty$. The key parameter that ensures the existence of a stationary distribution turns out to be $d > 0$. In other words, as long as the external signals play a role in the formation of opinions, the opinions of all individuals in the social network will attain an equilibrium distribution, which in general will not be concentrated around a single point or vector. 

The existence of a stationary distribution on a fixed graph can be derived from Theorem~1.1 in \cite{Diac_Freed_99}, however, for completeness, we give a short proof in Section~\ref{sec:proofs}. The precise result is given below; $\Rightarrow$ denotes weak convergence on $\mathbb{R}^{|V|}$. 

\begin{theorem}\label{thm:graphlim}
Let $\mathbf{R}^{(k)} = \{R_i^{(k)}: i\in V\}$ satisfy recursion~\eqref{eq:OpinionRec} on a fixed locally finite directed marked graph $G(V,E;\mathscr{A})$ (i.e., $d_i^- + d_i^+ < \infty$ for all $i \in V$) with given vertex attributes $\{\mathbf{x}_i: i \in V\}$. If $d>0$, then there exists a random vector $\mathbf{R} \in [-1,1]^{|V|}$ such that it does not depend on the distribution of $\mathbf{R}^{(0)}$ and
$$
\mathbf{R}^{(k)} \Rightarrow \mathbf{R}, \qquad k \to \infty.
$$
Furthermore, the convergence occurs at a geometric rate. 
\end{theorem}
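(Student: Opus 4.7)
The plan is to view \eqref{eq:OpinionRec} as an affine stochastic iteration whose deterministic linear part is a strict contraction on $\ell^\infty(V)$. First I would rewrite the recursion in matrix form as $\mathbf{R}^{(k+1)} = A \mathbf{R}^{(k)} + \mathbf{W}^{(k)}$, where $A$ is the $|V| \times |V|$ matrix with $A_{ii} = 1-c-d$, $A_{i,\ell(i,r)} = c(i,r)$ for $1 \leq r \leq d_i^-$, and zero elsewhere. Because the neighbor weights sum to $c$ when $d_i^- > 0$ (and to zero otherwise), every row of $A$ has sum at most $1-d$; local finiteness ensures each row has only finitely many nonzero entries, so the $\ell^\infty$ operator norm satisfies $\|A\|_\infty \leq 1-d < 1$, and submultiplicativity gives $\|A^k\|_\infty \leq (1-d)^k$.

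Next I would iterate to obtain
$$\mathbf{R}^{(k)} = A^k \mathbf{R}^{(0)} + \sum_{j=0}^{k-1} A^{k-1-j} \mathbf{W}^{(j)},$$
and apply the standard backward-coupling trick for affine stochastic recursions: conditional on the attributes $\{\mathbf{x}_i\}_{i \in V}$, the noise vectors $\{\mathbf{W}^{(j)}\}_{j \geq 0}$ are i.i.d.\ in time, so the index-reversed sum $\tilde{\mathbf{R}}^{(k)} := A^k \mathbf{R}^{(0)} + \sum_{j=0}^{k-1} A^j \mathbf{W}^{(j)}$ has the same law as $\mathbf{R}^{(k)}$. Since $|W_i^{(j)}| \leq c+d \leq 1$, the $i$-th coordinate of $A^j \mathbf{W}^{(j)}$ is bounded in absolute value by $(1-d)^j$, making the infinite series
$$\mathbf{R} := \sum_{j=0}^{\infty} A^j \mathbf{W}^{(j)}$$
absolutely convergent almost surely in each coordinate. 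The initial-condition term vanishes at the same geometric rate: $|(A^k \mathbf{R}^{(0)})_i| \leq (1-d)^k \|\mathbf{R}^{(0)}\|_\infty \to 0$.

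Combining these ingredients, $\tilde{\mathbf{R}}^{(k)} \to \mathbf{R}$ almost surely (coordinatewise), hence in distribution, and the equality in law $\mathbf{R}^{(k)} \stackrel{d}{=} \tilde{\mathbf{R}}^{(k)}$ yields $\mathbf{R}^{(k)} \Rightarrow \mathbf{R}$, with the limit manifestly independent of $\mathbf{R}^{(0)}$. The membership $\mathbf{R} \in [-1,1]^{|V|}$ is inherited from the a priori bound $\mathbf{R}^{(k)} \in [-1,1]^{|V|}$, which is easily checked by induction from the recursion. The geometric rate follows from
$$\bigl\|\tilde{\mathbf{R}}^{(k)} - \mathbf{R}\bigr\|_\infty \leq (1-d)^k \|\mathbf{R}^{(0)}\|_\infty + \sum_{j \geq k} (1-d)^j = O\bigl((1-d)^k\bigr),$$
which transfers to any reasonable metric for weak convergence on finite-dimensional marginals.

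The main technical delicacy is that $V$ may be countably infinite, so $A$ must be treated as an operator on a sequence space rather than a finite-dimensional linear map; however, non-negativity of entries, local finiteness of the graph, and the uniform row-sum bound $1-d$ together guarantee that $A^k \mathbf{x}$ is well-defined coordinatewise for every bounded $\mathbf{x}$. Weak convergence on $\mathbb{R}^{|V|}$ with the product topology then reduces to convergence of finite-dimensional marginals, which the coordinatewise almost-sure convergence above delivers at once.
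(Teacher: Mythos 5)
Your proof is correct and follows essentially the same route as the paper: the operator $A$ you define is exactly the paper's $\Delta$ with $\|\Delta\|_\infty \le 1-d$, and both arguments hinge on the time-reversed (backward) sum $\sum_j A^j \mathbf{W}^{(j)}$ having the same law as the forward iteration while converging at a geometric rate. The only cosmetic difference is that the paper first shifts the opinions to $[0,2]$ so the reversed sum converges monotonically, whereas you obtain the limit directly from absolute convergence of the geometrically dominated series; both yield the same bound $O((1-d)^k)$ and the same limit independent of $\mathbf{R}^{(0)}$.
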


\begin{remarks} \
\begin{enumerate}[leftmargin=*]
\item As mentioned earlier, the key feature of the model ensuring the existence of a stationary distribution is $d > 0$, since it implies that recursion \eqref{eq:OpinionRec} can be seen as a contraction operator on the underlying graph $G$ under a suitable Wasserstein metric.

\item The graph in Theorem~\ref{thm:graphlim} is only assumed to be locally finite, so it may in fact have an infinite number of vertices. In addition, it is allowed to be disconnected, which is important in our context given the possible presence of stubborn agents. 

\item If we allow $d = 0$, then our model reduces to the well-known DeGroot model \cite{degroot1974reaching}, and the process $\{ \mathbf{R}^{(k)}: k \geq 0\}$ can be thought of as the power iterations of a recursion of the form:
$$
\mathbf{R}^{(k+1)} = \mathbf{R}^{(k)} H = \mathbf{R}^{(0)} H^{k+1},
$$
for some deterministic matrix $H \in [0,1]^{|V|\times|V|}$. Since in this setting the matrix $H$ is usually assumed to be irreducible and stochastic, we have that $\mathbf{R}^{(k)} \to \mathbf{R} = \mathbf{R}^{(0)} \mathbf{v} \mathbf{1}'$ as $k \to \infty$, where $\mathbf{v}$ is the left Perron-Frobenius eigenvector of $H$ and $\mathbf{1}$ is the row vector of ones. Clearly, $\mathbf{R}$ is a deterministic vector that depends on the initial vector of opinions $\mathbf{R}^{(0)}$, and can be interpreted as the limiting vector of (non-random) opinions that individuals in the network will converge to after a large number of interactions. 

\item Unlike the case $d = 0$, the limiting vector $\mathbf{R}$ will be in general random (unless $W_i^{(k)} \equiv w_i$ for all $k \geq 1$ and $i \in V$), and its law corresponds to the stationary distribution of the Markov chain $\{ \mathbf{R}^{(k)}: k \geq 0\}$. This distribution does not depend on the initial vector of opinions $\mathbf{R}^{(0)}$.

\item In general, the components of the vector $\mathbf{R}$ in Theorem~\ref{thm:graphlim} for any fixed graph $G$ will not be either independent, nor identically distributed. The opinions of close neighbors will also tend to be highly correlated, and the topology of the graph $G$ will strongly influence the shape of the distribution of $\mathbf{R}$, especially for small graphs.

\end{enumerate}
\end{remarks}

In order for us to better understand the stationary distribution of $\{\mathbf{R}^{(k)}: k \geq 0\}$, it is useful to assume that the underlying graph is large, since some of the questions we aim to answer (e.g., the existence of consensus or polarization), do not in fact depend on the specific topology of the underlying graph $G$. To continue our analysis, we then replace the fixed graph $G$ with a family of vertex-weighted directed random graphs $\{ G(V_n, E_n; \mathscr{A}_n): n \geq 1\}$. From this point of view, the fixed graph $G$ can be thought of as simply one realization, and any insights about the opinion model that we can obtain for this family of random graphs can be expected to hold with high probability for the fixed graph $G$.

\section{Typical Behavior}\label{sec:typical_bahavior}

As mentioned in the previous section, in order for us to obtain more detailed information about the stationary distribution of the Markov chain $\{ \mathbf{R}^{(k)}: k \geq 0\}$, we will assume from now on that the underlying graph is a realization from a directed random graph model, which we will analyze under the large graph limit as the number of vertices grows to infinity. Under this asymptotic regime, any result we can establish for our opinion model can be interpreted as having a high probability of being true for any sufficiently large graph sampled from the underlying random graph model. 

Although our finer results in the sequel do require that we assume a particular class of random graph models, the results in this section only need the existence of a local weak limit. To start, consider a sequence of vertex-weighted directed random graphs (multigraphs) $\{ G(V_n, E_n; \mathscr{A}_n): n \geq 1\}$, where the $n$th graph in the sequence, $G(V_n, E_n; \mathscr{A}_n)$, has vertex set $V_n = \{ 1, 2, \dots, n\}$ and where each vertex $i \in V_n$ is assumed to have an attribute vector 
$$
\mathbf{Y}_i = (\mathbf{A}_i, Q_i, S_i) \in \mathcal{S}' \times [-1,1] \times \{0,1\}.
$$
The set of attributes is denoted $\mathscr{A}_n = \{ \mathbf{Y}_i: i \in V_n\}$. Since the graph (multigraph) $G(V_n, E_n; \mathscr{A}_n)$ is assumed to be random, the set of edges $E_n$ is not known until the graph is realized, and neither are the in-degrees and out-degrees of the vertices, denoted $D_i^-$ and $D_i^+$, respectively. Since the weights that each individual gives to its inbound neighbors also depends on the in-degree $D_i^-$, we assume that those are sampled after the edges in the graph have been realized. The distribution of the weights is allowed to depend on the vertex attributes $\mathbf{Y}_i$, but the weight vectors
$$
(C(i,1), C(i,2), \dots, C(i,D_i^-)), \quad \sum_{r=1}^{D_i^-} C(i,r) = c, \qquad i \in V_n,
$$
are assumed to be conditionally independent given $\mathscr{A}_n$. Once the set of edges has been sampled, the full mark of vertex $i \in V_n$ takes the familiar form:
$$
\mathbf{X}_i = (\mathbf{A}_i, Q_i, S_i, D_i^-, C(i,1) 1(D_i^- \geq 1), C(i,2) 1(D_i^- \geq 2), \dots) \in \mathcal{S}.
$$
The use of upper case letters is there to emphasize the random nature of these vertex attributes. 

Once we have sampled the random graph $G(V_n, E_n; \mathscr{A}_n)$, we run the Markov chain $\{ \mathbf{R}^{(k)}: k \geq 0\}$ on this fixed graph, starting from an initial vector $\mathbf{R}^{(0)}$ that is assumed to be conditionally independent of the signals given $G(V_n, E_n; \mathscr{A}_n)$. If we use $\mathbf{R} = (R_1, \dots, R_n)$ to denote a random vector distributed according to the stationary distribution of $\{\mathbf{R}^{(k)}: k \geq 0\}$, the object of interest is $R_{I_n}$, where vertex $I_n$ is uniformly chosen in $V_n$. In words, $R_{I_n}$ represents the stationary opinion of a ``typical" individual, but it also represents the average opinion, since conditionally on the graph $G(V_n, E_n; \mathscr{A}_n)$, we have
\begin{equation} \label{eq:LWC}
P\big( R_{I_n} \in A \mid G(V_n, E_n; \mathscr{A}_n) \big) = \frac{1}{n} \sum_{i=1}^n 1( R_i \in A)
\end{equation}
for any measurable set $A \subseteq [-1,1]$. Our use of random graph theory and local weak convergence (see, e.g., \cite{aldous2004objective, Hofstad2}) will allow us to prove that \eqref{eq:LWC} converges as $n \to \infty$ and becomes independent of the particular realization of the graph $G(V_n, E_n; \mathscr{A}_n)$. In other words, the stationary opinion distribution does not depend on the specific social network, provided it is sufficiently large and it is consistent with one of the many random graph models that have local weak limits. 

In order for us to state our main result regarding \eqref{eq:LWC}, we need to define a few concepts from random graph theory. 

\begin{defn}
We say that two simple graphs $G(V, E)$ and $G'(V', E')$ are {\em isomorphic} if there exists a bijection $\theta: V \to V'$ such that edge $(i,j) \in E$ if and only if edge $(\theta(i), \theta(j)) \in E'$.  We say that two multigraphs $G(V,E)$ and $G'(V', E')$ are {\em isomorphic} if there exists a bijection $\theta: V \to V'$ such that $l(i) = l(\theta(i))$ and $e(i,j) = e(\theta(i), \theta(j))$ for all $i \in V$ and all $(i,j) \in E$, where $l(i)$ is the number of self-loops of vertex $i$ and $e(i,j)$ is the number of edges from vertex $i$ to vertex $j$. In both cases, we write $G \simeq G'$.
\end{defn}

\begin{defn}
For any $i\in V_n$, define $G_i^{(k)}(\mathbf{X})$ to be the marked induced subgraph of $G(V_n, E_n; \mathscr{A}_n)$ consisting of vertices $j \in V_n$ such that there exists a directed path from $j$ to $i$ of length at most $k$, along with their marks, in other words, the marked in-component of depth $k$ of vertex $i$. We write $G_i^{(k)}$ for the corresponding graph without the marks. 
\end{defn}

The main assumption that we need in order to prove that \eqref{eq:LWC} converges as $n \to \infty$, is the following notion of a strong coupling \cite{Olvera_21}, which states the existence of a simultaneous construction of the random graph $G(V_n, E_n; \mathscr{A}_n)$ and its local weak limit, which we will denote $\mathcal{G}(\boldsymbol{\mathcal{X}})$, that ensures that the inbound neighborhood of vertex $I_n$ is very similar, including vertex marks, to the inbound neighborhood of a distinguished vertex, called the root $\emptyset$, of a graph $\mathcal{G}_\emptyset(\boldsymbol{\mathcal{X}}) \eqlaw \mathcal{G}(\boldsymbol{\mathcal{X}})$. It is worth mentioning that when we say that $\mathcal{G}(\boldsymbol{\mathcal{X}})$ is the local weak limit of $\{G(V_n, E_n; \mathscr{A}_n): n \geq 1\}$, what we mean is that for each $n \geq 1$ and each vertex $i \in V_n$ we can construct a graph $\mathcal{G}_{\emptyset(i)}(\boldsymbol{\mathcal{X}})$ whose law is equal to that of $\mathcal{G}(\boldsymbol{\mathcal{X}})$ when $i$ is chosen uniformly at random in $V_n$; more precisely,  $\mathcal{G}_{\emptyset(i)}(\boldsymbol{\mathcal{X}}) \eqlaw (\mathcal{G}(\boldsymbol{\mathcal{X}}) | \boldsymbol{\mathcal{Y}}_\emptyset = \mathbf{Y}_i)$, where $\boldsymbol{\mathcal{Y}}_\emptyset$ is the attribute of the root node $\emptyset$. The definition of a strong coupling alludes to the graphs constructed for two vertices uniformly chosen at random in $V_n$.

\begin{defn} \label{def:StrongCoupling}
The sequence $\{ G(V_n, E_n; \mathscr{A}_n): n \geq 1\}$ admits a \emph{strong coupling} with a marked directed graph $\mathcal{G}(\boldsymbol{\mathcal{X}})$ if the following hold:
\begin{itemize}
\item If $I_n$ is a uniformly chosen vertex in $V_n$ then for any fixed $k\geq 1$ and $\epsilon>0$, there exists a coupling between $G_{I_n}^{(k)}(\mathbf{X})$ and  a graph $\mathcal{G}_{\emptyset}(\boldsymbol{\mathcal{X}}) \eqlaw \mathcal{G}(\boldsymbol{\mathcal{X}})$ such that if $\mathcal{G}_{\emptyset}^{(k)}$ denotes the in-component of depth $k$ of the root of $\mathcal{G}_{\emptyset}(\boldsymbol{\mathcal{X}})$, the event
\begin{equation}\label{eq:distk_coupling}
\mathcal{E}_{I_n}^{(k,\epsilon)} = \left\{ \mathcal{G}^{(k)}_{\emptyset}\simeq G_{I_n}^{(k)},\, \theta(\emptyset) = I_n, \,  \bigcap_{i \in \mathcal{G}^{(k)}_{\emptyset}}   \{ \rho(\mathbf{X}_{\theta(i)}, \boldsymbol{\mathcal{X}}_{i} ) \leq \epsilon \} \right\},
\end{equation}
satisfies
$P\left( \mathcal{E}_{I_n}^{(k,\epsilon)} \,\middle|\, \mathscr{A}_n \right) \xrightarrow{P} 1$, as $n \to \infty$, where $\theta$ is the bijection defining $\mathcal{G}_{\emptyset}^{(k)} \simeq G_{I_n}^{(k)}$ and $\rho$ is the metric on the space of full marks $\mathcal{S}$.

\item Given two independent uniformly chosen vertices $I_n, J_n \in V_n$, there exist two independent copies of $\mathcal{G}(\boldsymbol{\mathcal{X}})$, denoted $\mathcal{G}_{\emptyset}(\boldsymbol{\mathcal{X}})$ and $\mathcal{\hat G}_{\hat \emptyset}(\boldsymbol{\mathcal{X}})$, such that the corresponding events $\mathcal{E}_{I_n}^{(k,\epsilon)}$ and $\mathcal{E}_{J_n}^{(k,\epsilon)}$ defined as above using  $\mathcal{G}_{\emptyset}^{(k)}$ and $\mathcal{\hat G}_{\hat \emptyset}^{(k)}$, respectively, satisfy 
$$
P\left( \left. \mathcal{E}_{I_n}^{(k,\epsilon)}\cap \mathcal{E}_{J_n}^{(k,\epsilon)} \right| \mathscr{A}_n \right) \xrightarrow{P} 1, \qquad n \to \infty.
$$
\end{itemize}
\end{defn}

Strong couplings exist for a very large class of sparse random graph models, including: the Erd\H os-R\'enyi model, the Chung-Lu model (given expected degrees model), the Norros-Reittu model (Poissonian random graph), the generalized random graph, the configuration model, the stochastic block-model, and even preferential attachment graphs. We refer the reader to \cite{Olvera_21} for a more detailed discussion on the connection between strong couplings and various notions of local weak convergence for random graphs. We also mention that the notion of a strong coupling extends to the study of dense graphs, whose limit object is an non-locally finite graph, but those fall outside the scope of this paper. 

In addition to the existence of a strong coupling, we need to impose a technical condition on the external signals ensuring that individuals with similar vertex attributes  receive external signals that have similar distributions. The precise condition requires that we define a metric on the vertex attribute space $\mathcal{S}$, which we assume to take the form:
$$
\rho(\mathbf{x},  \mathbf{\tilde x}) = \rho'(\mathbf{a}, \mathbf{\tilde a}) + |q - \tilde q| + |s - \tilde s| + |d^- - \tilde d^-| + \sum_{j=1}^\infty \left| c_j 1(d^- \geq j) - \tilde c_j 1(\tilde d^- \geq j) \right|,
$$
where $\rho'$ is the metric used on $\mathcal{S}'$. The condition on the external signals correspond to a Lipschitz condition under the Wasserstein metric:
$$
d_1(\mu,\nu) = \inf\left\{ E[|X - Y|]: \text{law}(X) = \mu, \, \text{law}(Y) = \nu \right\}.
$$
In the sequel, we use $\nu(\mathbf{x}_i)$ to denote the common probability measure of the sequence $\{ W_i^{(k)}: k \geq 1\}$.

We are now ready to present our main result regarding the convergence of the typical stationary opinion on directed random graphs having local weak limits; $\Rightarrow$ denotes weak convergence on $\mathbb{R}$. 

\begin{theorem}\label{thm:treelim}
Suppose the graph sequence $\{G(V_n, E_n; \mathscr{A}_n): n \geq 1\}$ admits a strong coupling with a marked directed graph $\mathcal{G}(\boldsymbol{\mathcal{X}})$. In addition, assume $d>0$ and suppose the conditional distribution $\nu(\cdot)$ of the external signals given the vertex marks satisfies
$$
d_1(\nu(\mathbf{x}), \nu(\mathbf{\tilde x})) \leq K \rho(\mathbf{x}, \mathbf{\tilde x}),
\quad\text{for all } \mathbf{x},\mathbf{\tilde x}\in\mathcal{S}
$$
for some constant $K < \infty$. Let $\{ \mathbf{R}^{(k)}: k \geq 0\}$ denote the vectors of opinions on the graph $G(V_n, E_n; \mathscr{A}_n)$, and let $I_n$ be uniformly chosen in $V_n$. 
\begin{enumerate}
\item For any fixed $k \geq 0$ there exists a sequence of random variables $\{ \mathcal{R}^{(r)}: 0 \leq r \leq k\}$, whose distribution does not depend on $\mathscr{A}_n$, such that for any bounded and continuous function $f:[-1,1] \to \mathbb{R}$, 
$$
\max_{0 \leq r \leq k} E\left[ \left. \left| R_{I_n}^{(r)} - \mathcal{R}^{(r)} \right| \right| \mathscr{A}_n \right] \xrightarrow{P} 0 \qquad \text{and} \qquad \frac{1}{n} \sum_{i=1}^{n} f( R_i^{(k)}) \xrightarrow{P} E[ f( \mathcal{R}^{(k)}) ], \qquad n \to \infty.
$$

\item Let $\mathbf{R}$ be a vector having the stationary distribution of $\{ \mathbf{R}^{(k)}: k \geq 0\}$.  Then, there exists a random variable $\mathcal{R}^*$, whose distribution does not depend on $\mathscr{A}_n$, such that for any bounded and continuous function $f:[-1,1] \to \mathbb{R}$, 
$$
E\left[ \left. \left| R_{I_n} - \mathcal{R}^* \right| \right| \mathscr{A}_n \right] \xrightarrow{P} 0 \qquad \text{and} \qquad \frac{1}{n} \sum_{i=1}^{n} f( R_i) \xrightarrow{P} E[ f( \mathcal{R}^*) ], \qquad n \to \infty.
$$
Moreover, $\mathcal{R}^{(k)} \Rightarrow \mathcal{R}^*$ as $k \to \infty$ at a geometric rate. 
\end{enumerate}
\end{theorem}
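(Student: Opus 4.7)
The plan is to exploit the locality of recursion~\eqref{eq:OpinionRec}---since $R_i^{(k)}$ depends only on the marked $k$-step in-neighborhood of vertex $i$, the initial opinions there, and the signals generated inside---in order to transport the dynamics across the strong coupling onto the limit graph $\mathcal{G}(\boldsymbol{\mathcal{X}})$. The first step is to define the candidate limit $\mathcal{R}^{(r)}$ intrinsically, by running~\eqref{eq:OpinionRec} on the rooted graph $\mathcal{G}_\emptyset(\boldsymbol{\mathcal{X}})$, with initial opinions drawn as functions of the marks so that at the root the law matches that of $R_{I_n}^{(0)}$ under a uniform vertex choice, and with external signals $\{\mathcal{W}_i^{(r)}\}$ whose conditional law given the marks is $\nu(\cdot)$; take $\mathcal{R}^{(r)}$ to be the opinion at the root after $r$ steps.

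For part~(1), fix $k \geq 0$ and $\epsilon>0$ and invoke the single-vertex strong coupling to realize $G_{I_n}^{(k)}(\mathbf{X})$ and $\mathcal{G}_\emptyset^{(k)}(\boldsymbol{\mathcal{X}})$ on a common probability space with $P(\mathcal{E}_{I_n}^{(k,\epsilon)}\mid\mathscr{A}_n)\xrightarrow{P}1$. On $\mathcal{E}_{I_n}^{(k,\epsilon)}$ the matched marks differ by at most $\epsilon$ under $\rho$, so by the explicit form of $\rho$ each pair $(C(\theta(i),r),\mathcal{C}(i,r))$ and the discrepancies in $Q,S,D^-$ are $O(\epsilon)$; the Lipschitz hypothesis then lets me couple each matched pair of signals via the $d_1$-optimal coupling to obtain $E|W_{\theta(i)}^{(r)}-\mathcal{W}_i^{(r)}|\leq K\epsilon$, independently across vertices and times. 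Iterating~\eqref{eq:OpinionRec} and using that opinions lie in $[-1,1]$ while the linear coefficients are nonnegative with row-sum $1-d\leq 1$, the root error after $r\leq k$ steps is bounded by a sum over the vertices of $\mathcal{G}_\emptyset^{(k)}$ of $O(\epsilon)$ terms. Intersecting with a further event $\{|\mathcal{G}_\emptyset^{(k)}|\leq N\}$---available since the limit graph is locally finite and $N$ can be chosen arbitrarily large---yields
$$
E\bigl[\,|R_{I_n}^{(r)}-\mathcal{R}^{(r)}|\,\bigm|\,\mathscr{A}_n\bigr] \;\leq\; C(k,N)\epsilon + 2P(\mathcal{E}_{I_n}^{(k,\epsilon)c}\mid\mathscr{A}_n) + 2P\bigl(|\mathcal{G}_\emptyset^{(k)}|>N\bigr),
$$
and sending $n\to\infty$, then $\epsilon\to 0$, then $N\to\infty$ gives the first claim. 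The empirical convergence follows by a second-moment argument using the two-vertex part of Definition~\ref{def:StrongCoupling}: writing $\bar f_n = n^{-1}\sum_i f(R_i^{(k)})$, its conditional mean equals $E[f(R_{I_n}^{(k)})\mid\mathscr{A}_n]\to E[f(\mathcal{R}^{(k)})]$ and its conditional second moment equals $E[f(R_{I_n}^{(k)})f(R_{J_n}^{(k)})\mid\mathscr{A}_n]+O(1/n)$, which converges in probability to $E[f(\mathcal{R}^{(k)})]^2$ because the two copies produced by the coupling are independent; Chebyshev finishes.

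For part~(2), I would promote the finite-horizon convergence to the stationary limit via the uniform geometric contraction underlying Theorem~\ref{thm:graphlim}. Each row of the matrix defining~\eqref{eq:OpinionRec} has nonnegative entries summing to $1-d$, so two solutions driven by the same signals satisfy $\|\mathbf{R}^{(k)}-\tilde{\mathbf{R}}^{(k)}\|_\infty \leq (1-d)^k\|\mathbf{R}^{(0)}-\tilde{\mathbf{R}}^{(0)}\|_\infty \leq 2(1-d)^k$ on every graph, including the locally finite limit; this both yields the existence of a stationary root opinion $\mathcal{R}^*$ with $\mathcal{R}^{(k)}\Rightarrow\mathcal{R}^*$ at rate $(1-d)^k$, and gives $|R_{I_n}-R_{I_n}^{(k)}|\leq 2(1-d)^k$ almost surely. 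Given $\delta>0$, choose $k$ with $4(1-d)^k<\delta$ and apply part~(1) at depth $k$; the triangle inequality delivers $E[|R_{I_n}-\mathcal{R}^*|\mid\mathscr{A}_n]<\delta$ with probability tending to $1$, and the empirical statement follows exactly as in part~(1).

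The main obstacle I expect is the bookkeeping of the three limits $n\to\infty$, $\epsilon\to 0$, $N\to\infty$ and the corresponding constant $C(k,N)$. Because the Lipschitz bound propagates signal-distribution errors additively along each coupled edge, $C(k,N)$ grows with $N$, so the neighborhood-size truncation must be performed \emph{before} choosing $\epsilon$, and one must verify that the coupling event still occurs with high conditional probability after intersecting with $\{|\mathcal{G}_\emptyset^{(k)}|\leq N\}$. A secondary subtlety is the uniformity in $n$ of the contraction rate $1-d$ from Theorem~\ref{thm:graphlim}, which is essential for the passage to stationarity but which follows by inspection, since the rate depends only on $d$ and not on $|V|$ or on the topology of the graph.
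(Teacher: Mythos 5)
Your proposal is correct and follows the same overall strategy as the paper: transport the recursion onto the limit graph via the strong coupling, couple the external signals optimally using the Wasserstein--Lipschitz hypothesis, run a second-moment/Chebyshev argument with the two-vertex coupling for the empirical averages, and use the uniform contraction rate $1-d$ to pass to stationarity. Two execution-level differences are worth flagging. First, in part (1) you bound the root error by a sum of $O(\epsilon)$ contributions over all vertices of $\mathcal{G}^{(k)}_\emptyset$, which forces the truncation $\{|\mathcal{G}^{(k)}_\emptyset|\le N\}$ and a third limit $N\to\infty$. The paper avoids this entirely: writing $J^{(t,r)}_i$ for the conditional expected discrepancy at vertex $i$, it shows $\|\mathbf{J}^{(t,t)}\|_\infty \le \|\Delta\|_\infty\,\|\mathbf{J}^{(t,t-1)}\|_\infty + (2+K)\epsilon$ with $\|\Delta\|_\infty\le 1-d$, because the neighbor errors enter weighted by the trust coefficients, which sum to at most $c$; iterating gives a bound $(2+K)\epsilon\sum_{r<t}(1-d)^r$ that is uniform in the neighborhood size, so only the limits $n\to\infty$ and $\epsilon\downarrow 0$ (followed by $\delta\downarrow 0$ for the modulus of continuity) are needed. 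Your version still works but is lossier and requires exactly the order-of-limits care you identify. Second, for part (2) you couple the stationary chain with the finite-horizon chain forward in time from two initial conditions driven by the same signals; the paper instead works with the time-reversed process $\mathbf{B}^{(k)}=\sum_{r=0}^k\bar\Delta^r\mathbf{W}^{(r)}$, proves the two-vertex coupling statement for $\mathbf{B}^{(k)}$ alongside $\mathbf{R}^{(k)}$ in Theorem~\ref{thm:LocalWeakConv}, and defines $\mathcal{R}^*$ as the almost-sure limit of the root coordinate of the analogous backward sums on the limit graph. Both mechanisms give the same geometric error $O((1-d)^k)$; your forward route relies on Theorem~\ref{thm:graphlim} applied to the (possibly infinite) locally finite limit graph for the existence of $\mathcal{R}^*$, which is legitimate since that theorem is stated for such graphs.
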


\begin{remarks}\
\begin{enumerate}[leftmargin=*]
\item The random variable $\mathcal{R}^*$ represents the ``typical" opinion on any sufficiently large graph $G$, assuming that $G$  was sampled from a random graph model admitting a strong coupling. 
\item The distribution of $\mathcal{R}^*$ can also be used to compute the proportion of individuals in $G$ having opinions in a specific range. 
\item As we will see in the following section, when the local limit $\mathcal{G}(\boldsymbol{\mathcal{X}})$ is a marked Galton-Watson process, a lot can be said about the distribution of $\mathcal{R}^*$. 
\item The statement of the theorem for fixed $k$ provides a description of the typical trajectory of the opinion process. The sequence $\{ \mathcal{R}_\emptyset^{(r)}: 0 \leq r \leq k\}$ corresponds to the evolution of the opinions of the root of the local limit $\mathcal{G}_\emptyset(\boldsymbol{\mathcal{X}})$. Similar results for interacting diffusion processes on undirected graphs whose local weak limit is a unimodular Galton-Watson process have been established in \cite{Kavita_etal_2019, Kavita_etal_2020}.
\end{enumerate}
\end{remarks}

\section{Explicit Characterizations}\label{sec:explicit_characterizations}

When the local limit $\mathcal{G}(\boldsymbol{\mathcal{X}})$ is a tree, as is the case for all the examples mentioned in the previous section, the random variable $\mathcal{R}^*$ becomes tractable enough to compute its moments and numerically estimate its distribution function. To give an explicit representation of $\mathcal{R}^*$ it is useful to label nodes in a tree according to their ancestry from the root $\emptyset$. Specifically, a node in generation $k$ (at distance $k$ from the root) will have a label of the form $\mathbf{i} = (i_1, \dots, i_k) \in \mathbb{N}_+^k$, with the convention that $\mathbb{N}_+^0 = \{ \emptyset \}$. We use $(\mathbf{i},j) = (i_1, \dots, i_k, j)$ to denote the concatenation operation,  $|\mathbf{i}|$ to denote the generation of $\mathbf{i}$, and $\mathcal{U} = \bigcup_{k=0}^\infty \mathbb{N}_+^k$ to denote the set of finite sequences; to simplify the notation, for sequences of length one we simply write $\mathbf{i} = i$ instead of $\mathbf{i} = (i)$. A vertex $\mathbf{i}$ will have a full mark of the form:
$$
\boldsymbol{\mathcal{X}}_\mathbf{i} = (\boldsymbol{\mathcal{A}}_\mathbf{i}, \mathcal{Q}_\mathbf{i}, \mathcal{S}_\mathbf{i}, \mathcal{N}_\mathbf{i}, \mathcal{C}_{(\mathbf{i},1)} 1( \mathcal{N}_\mathbf{i}  \geq 1), \mathcal{C}_{(\mathbf{i},2)} 1( \mathcal{N}_\mathbf{i}  \geq 2), \dots) \in \mathcal{S},
$$
and each node $\mathbf{i}$ has a sequence of random signals $\{ \mathcal{W}_\mathbf{i}^{(k)}: k \geq 0\}$ that is conditionally i.i.d.~given $\boldsymbol{\mathcal{X}}_\mathbf{i} = \mathbf{x}$, with common distribution $\nu(\mathbf{x})$.

We can now explicitly write $\mathcal{R}^*$, as the following result shows.

\begin{prop}\label{prop:explicit}
Suppose that the assumptions of Theorem~\ref{thm:treelim} hold with $\mathcal{G}(\boldsymbol{\mathcal{X}})$ a tree, then
\begin{align}\label{eq:sol_memory}
    \mathcal{R}^*: =  \sum_{s=0}^{\infty} \sum_{l=0}^{s}  \sum_{|\mathbf{j}| = l} \Pi_{\mathbf{j}} a_{l,s} \mathcal{W}_{\mathbf{j}}^{(s)},
\end{align}
where $a_{l,s} = \binom{s}{l}(1-c-d)^{s-l}$, and $\Pi_{\mathbf{j}}$ is recursively defined by $\Pi_{(\mathbf{i}, j)} = \Pi_{\mathbf{i}}\mathcal{C}_{(\mathbf{i},j)}$ with $\Pi_\emptyset = 1$.
\end{prop}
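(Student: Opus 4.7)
The plan is to derive \eqref{eq:sol_memory} by unrolling the opinion recursion \eqref{eq:OpinionRec} on the tree $\mathcal{G}(\boldsymbol{\mathcal{X}})$ starting from zero initial conditions and then passing to the limit $k\to\infty$. By the remarks following Theorem~\ref{thm:graphlim} the stationary opinion is independent of the initial condition, and Theorem~\ref{thm:treelim}(2) provides the weak convergence $\mathcal{R}_\emptyset^{(k)} \Rightarrow \mathcal{R}^*$ at a geometric rate, so my task reduces to identifying the distributional limit of the iterates with the right-hand side of \eqref{eq:sol_memory}. I expect the main obstacle to be the combinatorial bookkeeping at the induction step, where path-concatenation in the tree, the time-index shift, and the binomial coefficients $a_{l,s}$ must be coordinated.

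First, I would prove by induction on $k\geq 1$ that, with $\mathcal{R}_\mathbf{i}^{(0)}\equiv 0$,
\[
\mathcal{R}_\emptyset^{(k)} = \sum_{s=0}^{k-1}\sum_{l=0}^{s}\sum_{|\mathbf{j}|=l}\Pi_\mathbf{j}\, a_{l,s}\, \mathcal{W}_\mathbf{j}^{(k-1-s)}.
\]
The base case $k=1$ reduces to $\mathcal{R}_\emptyset^{(1)}=\mathcal{W}_\emptyset^{(0)}$. For the inductive step, I would apply the tree version of \eqref{eq:OpinionRec} at the root,
\[
\mathcal{R}_\emptyset^{(k+1)} = \sum_{j=1}^{\mathcal{N}_\emptyset}\mathcal{C}_{(\emptyset,j)}\mathcal{R}_j^{(k)} + (1-c-d)\mathcal{R}_\emptyset^{(k)} + \mathcal{W}_\emptyset^{(k)},
\]
substitute the induction hypothesis for each $\mathcal{R}_j^{(k)}$ in the subtree $T_j$ rooted at $j$, and use $\mathcal{C}_{(\emptyset,j)}\Pi_{\mathbf{j}'}=\Pi_{(j,\mathbf{j}')}$ to reindex each depth-$l$ vertex of $T_j$ as a depth-$(l+1)$ vertex of the full tree. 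Collecting coefficients by depth then reduces to the Pascal-type identity
\[
a_{l,s+1} = (1-c-d)\, a_{l,s} + a_{l-1,s}, \qquad a_{-1,s}:=0,
\]
which after a shift $s\mapsto s+1$ in the outer index absorbs the $\mathcal{W}_\emptyset^{(k)}$ term (as the $s=0$ contribution) and delivers the formula at level $k+1$.

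Since the signals $\{\mathcal{W}_\mathbf{j}^{(t)}\}_{t\geq 0}$ are conditionally i.i.d.\ in $t$ given the marks and independent across vertices, the time reversal $t\mapsto k-1-t$ preserves their joint law, so $\mathcal{R}_\emptyset^{(k)}$ has the same distribution as the $k$-th partial sum of \eqref{eq:sol_memory}. To pass to the limit I would use the almost sure bound $\sum_{|\mathbf{j}|=l}\Pi_\mathbf{j}\leq c^l$, proved by induction on $l$ from $\sum_{j=1}^{\mathcal{N}_\mathbf{i}}\mathcal{C}_{(\mathbf{i},j)}\leq c$, together with $|\mathcal{W}_\mathbf{j}^{(s)}|\leq 1$ and the binomial collapse $\sum_{l=0}^{s}\binom{s}{l}(1-c-d)^{s-l}c^l = (1-d)^s$. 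This bounds the series in \eqref{eq:sol_memory} absolutely by $\sum_{s\geq 0}(1-d)^s=1/d<\infty$ and yields a geometric tail bound of order $(1-d)^k/d$ on the $L^1$ distance between the $k$-th partial sum and the full series, giving convergence both almost surely and in $L^1$. Combined with Theorem~\ref{thm:treelim}(2) this identifies $\mathcal{R}^*$ with the series in \eqref{eq:sol_memory}, and makes transparent why the hypothesis $d>0$ is essential: it is precisely the geometric contraction rate.
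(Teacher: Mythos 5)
Your proposal is correct and follows essentially the same route as the paper: the heart of both arguments is the induction driven by the Pascal-type identity $a_{l-1,s}+(1-c-d)a_{l,s}=a_{l,s+1}$ together with the reindexing $\mathcal{C}_{(\emptyset,j)}\Pi_{\mathbf{j}'}=\Pi_{(j,\mathbf{j}')}$, the only cosmetic difference being that you induct on the time index $k$ of the unrolled recursion while the paper inducts on the power $s$ of the operator $\Delta$ to show $\Delta^s f(\mathbf{i})=\sum_{l=0}^s\sum_{|\mathbf{j}|=l}\frac{\Pi_{(\mathbf{i},\mathbf{j})}}{\Pi_\mathbf{i}}a_{l,s}f((\mathbf{i},\mathbf{j}))$ and then simply cites the representation $\boldsymbol{\mathcal{R}}\eqlaw\sum_{s=0}^\infty\Delta^s\boldsymbol{\mathcal{W}}^{(s)}$ already established in the proof of Theorem~\ref{thm:graphlim}. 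Your explicit time-reversal and $(1-d)^s$ summability argument re-derives, in a self-contained way, exactly what that citation supplies, so nothing is missing.
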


\begin{remark} \label{rem:SFPE}
If $c+d = 1$, the autoregressive term in recursion \eqref{eq:OpinionRec} disappears, which we will refer to as the no-memory case, and $\mathcal{R}^*$ reduces to:
\begin{align} \label{eq:sol_no_memory}
    \mathcal{R}^*: =  \sum_{s=0}^{\infty}   \sum_{|\mathbf{j}| = s} \Pi_{\mathbf{j}} \mathcal{W}_{\mathbf{j}}^{(s)} =: \sum_{j=1}^{\mathcal{N}_\emptyset} \mathcal{C}_j \mathcal{R}_j + \mathcal{W}_\emptyset^{(0)},
\end{align}
with
\begin{equation} \label{eq:SpecialSol}
\mathcal{R}_j = \sum_{s=1}^\infty \sum_{|\mathbf{i}| = s-1}  \frac{\Pi_{(j,\mathbf{i})}}{\mathcal{C}_j} \mathcal{W}_{(j,\mathbf{i})}^{(s)},
\end{equation}
and the convention that $\Pi_{(j,\mathbf{i})}/\mathcal{C}_j \equiv 1$ if $\mathcal{C}_j  = 0$. Moreover, if $\mathcal{G}(\boldsymbol{\mathcal{X}})$ is a marked Galton-Watson process (as it is for all the models mentioned above with the exception of preferential attachment graphs), the $\{\mathcal{R}_i\}$ are i.i.d.~copies of the so-called special endogenous solution to the branching stochastic fixed-point equation (SFPE):
$$
\mathcal{R} \eqlaw \sum_{j=1}^{\mathcal{N}_1} \mathcal{C}_{(1,j)} \mathcal{R}_j + \mathcal{W}_1^{(1)},
$$
where the $\{\mathcal{R}_j\}$ are i.i.d.~copies of $\mathcal{R}$, independent of $(\mathcal{N}_1, \mathcal{W}_1^{(1)}, \mathcal{C}_{(1,1)}, \mathcal{C}_{(1,2)}, \dots)$. Having this SFPE characterization makes the computation of moments easier and their expressions simpler. This linear SFPE is known in the literature as the smoothing transform, and the study of its solutions dates back to the 70s. A classical survey on branching SFPEs and the notion of endogeny is \cite{Aldo_Band_05}. The full characterization of all the solutions to the smoothing transform was done in \cite{Als_Big_Mei_12,Als_Mei_12}, and it is also there where  \eqref{eq:SpecialSol} was given the name ``special endogenous solution''. 
\end{remark}

From now on we will assume that the local limit $\mathcal{G}(\boldsymbol{\mathcal{X}})$ of the random graph sequence $\{G(V_n, E_n; \mathscr{A}_n): n \geq 1\}$ is a (delayed) marked Galton-Watson tree, i.e., the node marks $\{ \boldsymbol{\mathcal{X}}_\mathbf{i}: \mathbf{i} \in \mathcal{U} \}$ are independent of each other, with $\{ \boldsymbol{\mathcal{X}}_\mathbf{i}: \mathbf{i} \in \mathcal{U}, \mathbf{i} \neq \emptyset \}$ i.i.d. The qualifier ``delayed" refers to the possibility of $\boldsymbol{\mathcal{X}}_\emptyset$ having a different distribution than all other node marks, which is an important feature of most of the random graph models for which strong couplings exist. The different distribution is due to the {\em size-bias} encountered during the exploration of the graph when sampling vertices that are known to be inbound neighbors of another vertex (the Erd\H os-R\'enyi model and all regular graphs are exempt from size-bias).

To emphasize that the local limit is assumed from now on to be a tree, we will denote the marked tree by $\mathcal{T}(\boldsymbol{\mathcal{X}})$. In the next subsections we will use the explicit representation \eqref{eq:sol_memory} and the properties of $\mathcal{T}(\boldsymbol{\mathcal{X}})$ to explain when consensus exists, when polarization occurs, what the presence of stubborn agents can do, and what are the parameters in our model that explain these phenomena. 
\subsection{Consensus}\label{sec:consensus}

The existence of consensus for opinion dynamics on networks has been the main focus of much of the early work \cite{abelson1964mathematical, degroot1974reaching, french1956formal}. For a fixed graph $G$, consensus means that all the components in the vector of stationary opinions $\mathbf{R} = (R_1, \dots, R_{|V|})$ are equal to each other. In the context of a sequence of random graphs $\{G(V_n, E_n; \mathscr{A}_n): n \geq 1\}$ having a local weak limit, consensus would mean that the random variable $R_{I_n}$ defined in \eqref{eq:LWC} is a constant, which under the assumptions of Theorem~\ref{thm:treelim} would be  equivalent to the random variable $\mathcal{R}^*$ being a constant. In other words, we identify consensus with $\var(\mathcal{R}^*)$ being close to zero. 

Before we explain how consensus can occur in our opinion model, note that on a tree its dynamics are described by the recursion:
$$
\mathcal{R}_\mathbf{i}^{(k+1)} = \sum_{j=1}^{\mathcal{N}_\mathbf{i}} \mathcal{C}_{(\mathbf{i}, j)} \mathcal{R}_{(\mathbf{i},j)}^{(k)} + \mathcal{W}_\mathbf{i}^{(k)} + (1-c-d) \mathcal{R}_\mathbf{i}^{(k)}, \qquad \mathbf{i} \in \mathcal{T}(\boldsymbol{\mathcal{X}}), \qquad k \geq 0,
$$
where $\mathcal{R}_\mathbf{i}^{(k)}$ is the opinion of node $\mathbf{i}$ at time $k$. We will also assume throughout this section that the external signals take the form
$$
\mathcal{W}_\mathbf{i}^{(k)} = d \mathcal{Z}_\mathbf{i}^{(k)}  + \mathcal{Q}_\mathbf{i} \left( c - \sum_{j=1}^{\mathcal{N}_\mathbf{i}} \mathcal{C}_{(\mathbf{i},j)} \right),
$$
with $\{ \mathcal{Z}_\mathbf{i}^{(k)}: k \geq 0, \mathbf{i} \in \mathcal{U} \} \subseteq [-1,1]$ conditionally independent given $\mathcal{T}(\boldsymbol{\mathcal{X}})$, and $\{ \mathcal{Z}_\mathbf{i}^{(k)}: k \geq 0 \}$ conditionally i.i.d.~given $\boldsymbol{\mathcal{X}}_\mathbf{i}$. In words, for nodes having a positive in-degree $\mathcal{N}_\mathbf{i}$, the media contributes a proportion $d$ to that node's opinion, while its neighbors contribute a proportion $c$. The effect of the neighbors can be understood as promoting consensus, while the media signals introduce noise. As $d$ grows, the media's influence on the formation of opinions increases, making the stationary opinion mimic the media. 

The result below gives explicit formulas for the mean and variance of $\mathcal{R}^*$ which explain when consensus occurs. We only state the formulas for the no-memory case since the general case is qualitatively the same. We also assume for most of the results in Section~\ref{sec:explicit_characterizations} that all vertices have at least one neighbor, since this significantly simplifies the exposition. The formulas for the general case are included in the appendix. 

\begin{prop}\label{prop:consensus}
Suppose the assumptions of Theorems~\ref{thm:treelim} hold, with a (delayed) marked Galton-Watson tree  $\mathcal{T}(\boldsymbol{\mathcal{X}})$ as the local limit. Then, if $P(\mathcal{N}_\emptyset > 0) = P(\mathcal{N}_1 > 0) = 1$, and $c+d=1$, the mean and variance of $\mathcal{R}^*$ are given by:
    \begin{align*}
    E\left[ \mathcal{R}^* \right] &=d
    E\left[ \mathcal{Z}_\emptyset \right] + cE[\mathcal{Z}_1] ,\\
    \var(\mathcal{R}^*)  &= d^2\var(\mathcal{Z}_\emptyset)  +  \frac{ \rho_2^*d^2}{1-\rho_2}
    \var(\mathcal{Z}_1), 
    \end{align*}
where $\rho_2^* = E\left[\sum_{i = 1}^{\mathcal{N}_\emptyset} \mathcal{C}_i^2\right]$ and $\rho_2 = E\left[\sum_{i = 1}^{\mathcal{N}_1} \mathcal{C}_{(1,i)}^2\right]$.
\end{prop}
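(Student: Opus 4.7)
Since $c+d=1$, Remark~\ref{rem:SFPE} applies and $\mathcal{R}^*=\sum_{j=1}^{\mathcal{N}_\emptyset}\mathcal{C}_j\mathcal{R}_j+\mathcal{W}_\emptyset^{(0)}$, with $\{\mathcal{R}_j\}$ i.i.d.\ copies of the special endogenous solution $\mathcal{R}\eqlaw\sum_{j=1}^{\mathcal{N}_1}\mathcal{C}_{(1,j)}\mathcal{R}_j+\mathcal{W}_1^{(1)}$, independent of the root's offspring data. The hypothesis $P(\mathcal{N}_\emptyset>0)=P(\mathcal{N}_1>0)=1$ forces the deterministic identity $\sum_{j=1}^{\mathcal{N}_\mathbf{i}}\mathcal{C}_{(\mathbf{i},j)}=c$ at every node, which in the media-signal form kills the $\mathcal{Q}_\mathbf{i}(c-\sum\mathcal{C})$ contribution and leaves $\mathcal{W}_\mathbf{i}^{(k)}=d\mathcal{Z}_\mathbf{i}^{(k)}$ throughout. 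The plan is then to read the first two moments of $\mathcal{R}$ off its SFPE and transfer them to $\mathcal{R}^*$ via one additional application of the recursion at the delayed root.

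For the mean, let $\mu=E[\mathcal{R}]$. Taking expectations in the SFPE and using independence of the $\mathcal{R}_j$ from the offspring data together with the a.s.\ identity above gives $\mu=c\mu+dE[\mathcal{Z}_1]$, hence $\mu=E[\mathcal{Z}_1]$ since $d=1-c$. The root recursion then immediately yields $E[\mathcal{R}^*]=c\mu+dE[\mathcal{Z}_\emptyset]=cE[\mathcal{Z}_1]+dE[\mathcal{Z}_\emptyset]$.

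For the variance, write $\sigma^2=\var(\mathcal{R})$ and apply the law of total variance conditional on the attribute $\boldsymbol{\mathcal{X}}_1$. The $\mathcal{R}_j$ are independent of $\boldsymbol{\mathcal{X}}_1$ with common variance $\sigma^2$, while $\mathcal{Z}_1^{(1)}$ is independent of the $\mathcal{R}_j$ with conditional moments determined by $\boldsymbol{\mathcal{X}}_1$. The expected conditional variance is $E\big[\sum_j\mathcal{C}_{(1,j)}^2\big]\sigma^2+d^2E[\var(\mathcal{Z}_1|\boldsymbol{\mathcal{X}}_1)]=\rho_2\sigma^2+d^2E[\var(\mathcal{Z}_1|\boldsymbol{\mathcal{X}}_1)]$, while the variance of the conditional mean $c\mu+dE[\mathcal{Z}_1|\boldsymbol{\mathcal{X}}_1]$ reduces to $d^2\var(E[\mathcal{Z}_1|\boldsymbol{\mathcal{X}}_1])$ because the $c\mu$ piece is constant---the crucial cancellation powered by $\sum\mathcal{C}=c$. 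Summing the two contributions and invoking the law of total variance for $\mathcal{Z}_1$ gives $\sigma^2=\rho_2\sigma^2+d^2\var(\mathcal{Z}_1)$, i.e.\ $\sigma^2=d^2\var(\mathcal{Z}_1)/(1-\rho_2)$. Repeating the identical manipulation at the root with $\mathcal{N}_\emptyset$ in place of $\mathcal{N}_1$ and $\mathcal{Z}_\emptyset$ in place of $\mathcal{Z}_1$ produces $\var(\mathcal{R}^*)=\rho_2^*\sigma^2+d^2\var(\mathcal{Z}_\emptyset)$, which after substituting for $\sigma^2$ is the announced formula.

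The only non-routine step is spotting that the identity $\sum_j\mathcal{C}=c$ eliminates the between-attribute variance of the conditional mean; without this cancellation a $\mu^2\var(\sum_j\mathcal{C}_{(1,j)})$ term would survive and couple the answer to second moments of the partial weight-sums that are not, in general, available from the data specifying $\rho_2$. A minor check is that $\rho_2<1$ so $\sigma^2$ is well defined, which follows from $\sum_j\mathcal{C}_{(1,j)}^2\le c\max_j\mathcal{C}_{(1,j)}\le c^2<1$ a.s. Everything else is a routine application of the tower property.
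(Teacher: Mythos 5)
Your proof is correct and follows essentially the same route as the paper: it starts from the SFPE representation of Remark~\ref{rem:SFPE}, exploits the deterministic identity $\sum_j \mathcal{C}_{(1,j)} = c$ (so that $\mathcal{W}_\mathbf{i}^{(k)} = d\mathcal{Z}_\mathbf{i}^{(k)}$ and the $cE[\mathcal{R}]$ term in the conditional mean carries no variance), and applies the law of total variance first at a generic node and then at the delayed root. The paper merely interleaves this with the conditional formulas needed for Proposition~\ref{prop:polarization}; the substance is identical.
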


\begin{remarks}\
\begin{enumerate}[leftmargin=*]
\item The expression for the variance states that consensus can happen when $d \mathcal{Z}_\emptyset$ and $d \mathcal{Z}_1$ have small variances.

\item When $d$ is small, the consensus is attained through the averaging of the opinions of each node's neighbors, which in the long run leads to everybody on the limiting network (connected since it is assumed to be a tree) having the same opinion. A small $d$ in this case simply means that the media signals play a small role in the formation of opinions, regardless of the nature of the signals. 

\item  Even if $d$ is not very small, consensus can still occur if the media signals $\{\mathcal{Z}_\mathbf{i}^{(k)}: k \geq 0\}$ themselves have a small variance, in other words, if everyone in the network is listening to essentially the same opinion. From a modeling point of view, this case could be used to model a state-controlled media that is highly trusted by the citizens. 

\item For the no-memory case ($c+d = 1$), the characterization of $\mathcal{R}^*$ in terms of the special endogenous solution to the smoothing transform (see Remark~\ref{rem:SFPE}), shows that the stationary opinion cannot be a single point, since no Dirac measure solves the SFPE. 
\end{enumerate}
\end{remarks}

To illustrate the results in this section, we also performed a simulation of an Erd\H os-R\'enyi graph with $n=1000$ vertices and edge probability $p = 0.03$. Vertices with zero in-degree are extremely rare, and have no special attributes if they occur. The vector of weights $(C(i,1), \dots, C(i,D_i^-))$ gives equal weight $c/D_i^-$ to each inbound neighbor. Everyone's initial opinion $R_i^{(0)}$ is chosen to be uniform on $\{-1,1\}$. We then chose a large value of $k$ and iterate \eqref{eq:OpinionRec} to approximate a realization of the stationary vector $\mathbf{R} = (R_1, \dots, R_n)$. We do this for two different choices of $c,d \geq 0$, one with $c+d < 1$ (general case) and one with $c+d = 1$ (no-memory case), with the latter parameters chosen proportionally to the weights given to the neighbors and to the media in the general case (i.e., the contribution of the neighbors is $c/(c+d)$ and that of the media is $d/(c+d)$). The media signals $\{ Z_i^{(k)}: k \geq 0, i \in V_n\}$ are chosen to be i.i.d.~and independent of the vertex attributes, which means that the internal opinions $\{Q_i: i \in V_n\}$ play essentially no role. The plots in Figure~\ref{fig:consensus_media_small_var}, \ref{fig:consensus_small_d}, and \ref{fig:consensus_not_in_the_middle} plot the histograms computed from the simulated values of the components of $\mathbf{R}$, which by Theorem~\ref{thm:treelim} are consistent estimators for the distribution of $\mathcal{R}^*$. 

Figure~\ref{fig:consensus_media_small_var} shows the case where $d$ is ``large", i.e., close to one, and media signals that have very small variance (e.g., highly trusted state-controlled media). As predicted by Proposition~\ref{prop:consensus}, the stationary opinion is concentrated around the mean of the media signals, in this case, zero. 

\begin{figure}
    \centering
    \includegraphics[scale = 0.6, trim={2.3cm 5cm 2.5cm 5cm},clip]{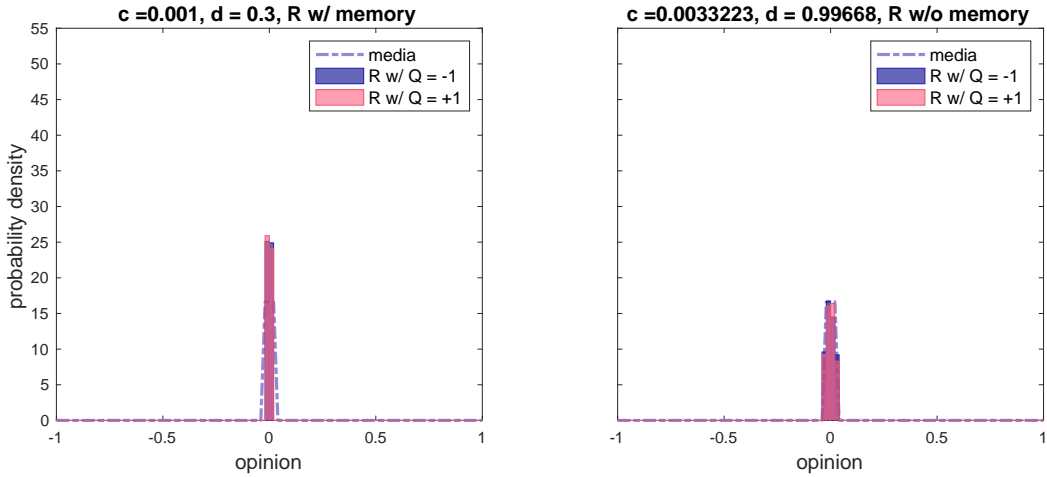}
     \caption{Empirical distribution of opinions in an Erd\H os-R\'enyi graph with $G(1000, 0.03)$ with internal opinion $Q_i \sim \text{Unif}(-1,1)$ and media signals $Z_{i}^{(k)}\sim \text{Unif}(-0.03,0.03)$, independent of the vertex attributes. }
     \label{fig:consensus_media_small_var}
\end{figure}

Figure~\ref{fig:consensus_small_d} and Figure~\ref{fig:consensus_not_in_the_middle} illustrate the case where $d$ is close to zero, i.e., people trust their neighbors more than they trust the media, and the averaging of opinions eventually leads to consensus, regardless of the shape of the media. Figure~\ref{fig:consensus_small_d} depicts media signals that are polarized, since they send signals in $\{-1,1\}$, however, the signals are unrelated to the individuals' attributes, so everyone is exposed to both kinds of extreme signals. Figure~\ref{fig:consensus_not_in_the_middle} shows how the mean of the media signals determines where the consensus is attained. We note that in these experiments, the no-memory case and the general case behave in a very similar way. 

\begin{figure}
    \centering
   \includegraphics[scale = 0.6, trim={2.3cm 5cm 2.5cm 5cm},clip]{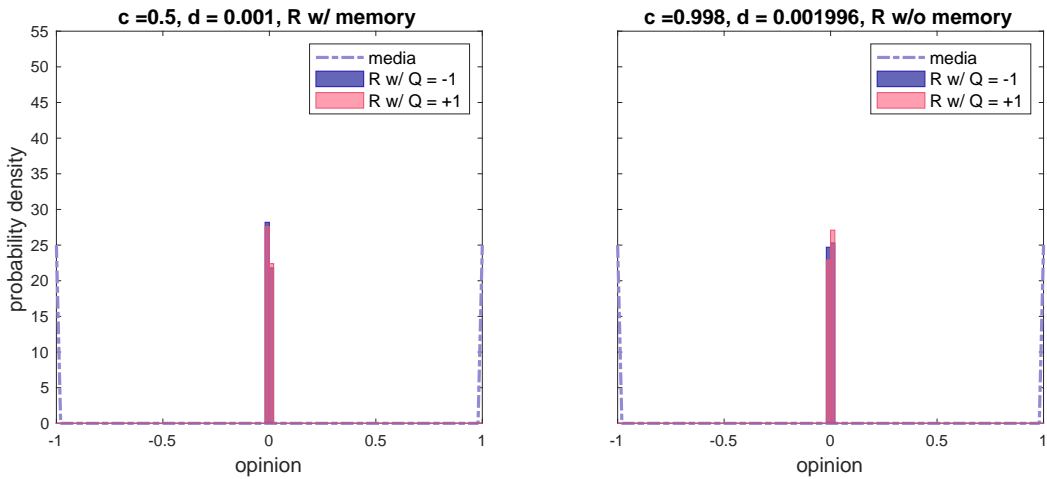}
    \caption{Empirical distribution of opinions in an Erd\H os-R\'enyi graph $G(1000, 0.03)$ with internal opinion $Q_i \sim \text{Unif}(-1,1)$, and media signals $Z_{i}^{(k)}\sim \text{Unif}\{-1,1\}$, independent of the vertex attributes. }
    \label{fig:consensus_small_d}
\end{figure}

\begin{figure}
    \centering
  \includegraphics[scale = 0.6, trim={2.3cm 5cm 2.5cm 5cm},clip]{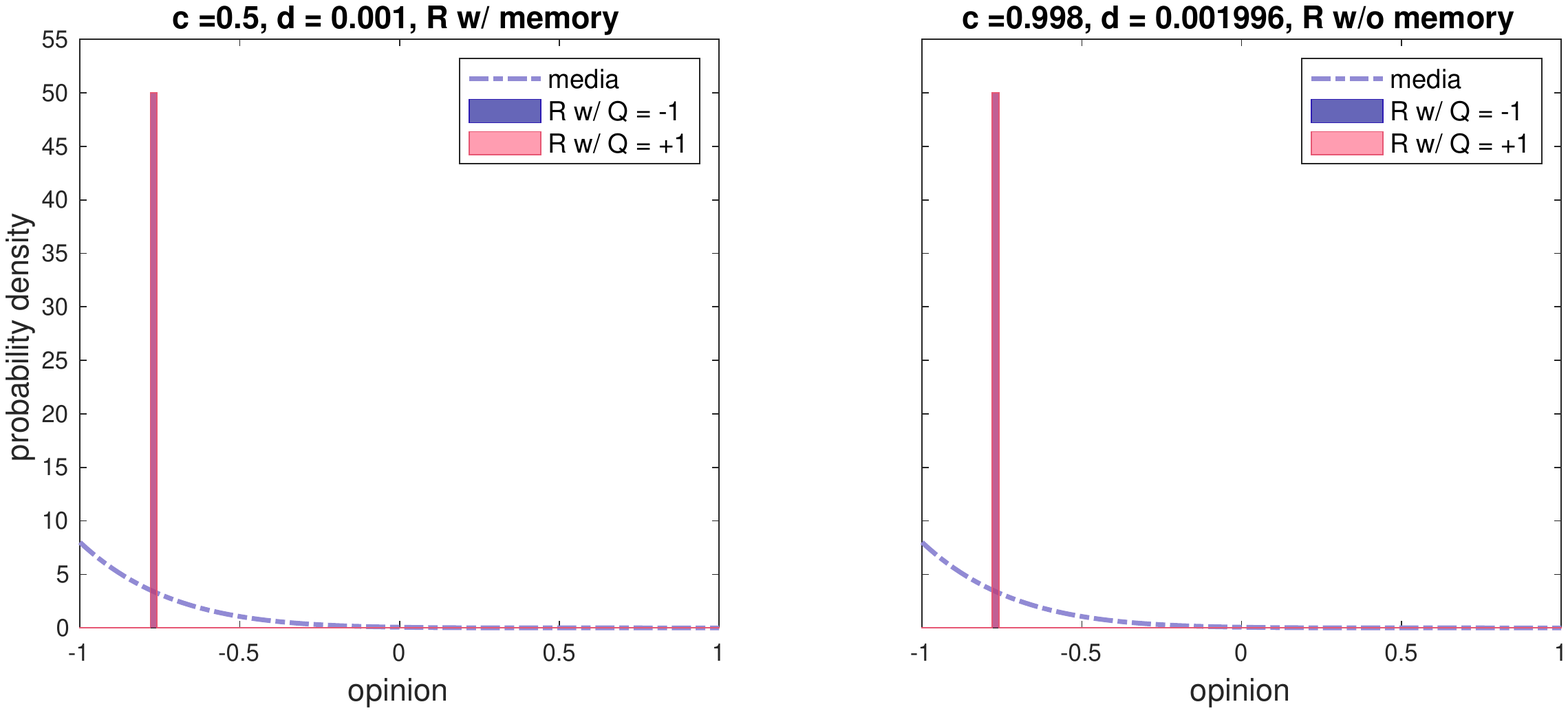}
    \caption{Empirical distribution of opinions in an Erd\H os-R\'enyi graph $G(1000, 0.03)$ with internal opinion $Q_i \sim \text{Unif}(-1,1)$, and media signals $Z_{i}^{(k)}\sim -1+2\text{Beta}(1,8)$, independent of the vertex attributes. }
    \label{fig:consensus_not_in_the_middle}
\end{figure}

\subsection{Polarization}

We now move on to explain how our model can also result in opinions that are polarized, in the sense that opinions in the network are split into two groups with opposing beliefs on the topic of interest. Our model produces polarization by allowing individuals in the network to listen to media signals that are aligned with their internal beliefs, which is consistent with the phenomenon that social scientists refer to as \emph{selective exposure} \cite{festinger1957cognitive}. Mathematically, polarization means that the distribution of $\mathcal{R}^*$ is bimodal and $\var(\mathcal{R}^*)$ is not small, with the two modes representing the two groups that attain some level of consensus within themselves but not with each other.  Our model is designed to incorporate each individual's profile through the vertex attributes, so we use the vertex attributes as a basis for separating individuals into distinct groups. In this setting, polarization would imply that similar individuals are likely to end up in the same group, which translates into $\var(\mathcal{R}^* | \boldsymbol{\mathcal{X}}_\emptyset)$ being small. The precise result is given below.

\begin{prop}\label{prop:polarization}
Suppose the assumptions of Theorems~\ref{thm:treelim} hold, with a (delayed) marked Galton-Watson tree  $\mathcal{T}(\boldsymbol{\mathcal{X}})$ as the local limit. Then, if $P(\mathcal{N}_\emptyset > 0) = P(\mathcal{N}_1 > 0) = 1$, and $c+d=1$, the conditional mean and variance of $\mathcal{R}^*$ given the vertex attribute $\boldsymbol{\mathcal{X}}_\emptyset$ are given by:
\begin{align*}
E\left[ \mathcal{R}^* | \boldsymbol{\mathcal{X}}_\emptyset \right] & = dE\left[\mathcal{Z}_\emptyset|\boldsymbol{\mathcal{X}}_\emptyset \right]+ cE[\mathcal{Z}_1], \\
\var(\mathcal{R}^* |\boldsymbol{\mathcal{X}}_\emptyset )
&=   d^2 \var\left(\left.\mathcal{Z}_\emptyset \right|\boldsymbol{\mathcal{X}}_\emptyset\right)+\frac{d^2}{1 - \rho_2}\sum_{i = 1}^{\mathcal{N}_{\emptyset}}\mathcal{C}_{i}^2\var(\mathcal{Z}_1),
\end{align*}
where $\rho_2 = E\left[\sum_{i = 1}^{\mathcal{N}_1}C_{(1,i)}^2\right]$.
\end{prop}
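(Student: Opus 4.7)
The plan is to use the representation from Remark \ref{rem:SFPE}, which in the no-memory case with $P(\mathcal{N}_\emptyset>0) = 1$ gives
$$
\mathcal{R}^* = \sum_{j=1}^{\mathcal{N}_\emptyset} \mathcal{C}_j \mathcal{R}_j + \mathcal{W}_\emptyset^{(0)},
$$
where the $\{\mathcal{R}_j\}$ are i.i.d.\ copies of the special endogenous solution to the smoothing transform, independent of $\boldsymbol{\mathcal{X}}_\emptyset$ and of $\mathcal{W}_\emptyset^{(0)}$. Moreover, since $\sum_{j=1}^{\mathcal{N}_\emptyset} \mathcal{C}_{(\emptyset,j)} = c$ almost surely (by the weight normalization and $\mathcal{N}_\emptyset>0$), the external signal simplifies to $\mathcal{W}_\emptyset^{(0)} = d\mathcal{Z}_\emptyset$. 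The key structural fact is that in the delayed marked Galton-Watson tree, the subtrees rooted at the children of $\emptyset$ carry marks independent of $\boldsymbol{\mathcal{X}}_\emptyset$, so that the $\{\mathcal{R}_j\}$ are independent of $\boldsymbol{\mathcal{X}}_\emptyset$, while $(\mathcal{N}_\emptyset, \mathcal{C}_1, \mathcal{C}_2, \ldots)$ is measurable with respect to $\boldsymbol{\mathcal{X}}_\emptyset$.

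First I would establish $E[\mathcal{R}]$ and $\var(\mathcal{R})$, where $\mathcal{R}$ denotes a generic copy of $\mathcal{R}_j$. From the SFPE $\mathcal{R} \eqlaw \sum_{j=1}^{\mathcal{N}_1}\mathcal{C}_{(1,j)}\mathcal{R}_j + d\mathcal{Z}_1$ and the constraint $\sum_{j=1}^{\mathcal{N}_1}\mathcal{C}_{(1,j)} = c$, taking expectations gives $E[\mathcal{R}] = c E[\mathcal{R}] + d E[\mathcal{Z}_1]$, hence $E[\mathcal{R}] = E[\mathcal{Z}_1]$ using $c+d=1$. For the variance, I would apply the law of total variance conditioning on $\boldsymbol{\mathcal{X}}_1$: the inner variance equals $\sum_j \mathcal{C}_{(1,j)}^2 \var(\mathcal{R}) + d^2\var(\mathcal{Z}_1\mid \boldsymbol{\mathcal{X}}_1)$ by independence of the $\mathcal{R}_j$ from $\boldsymbol{\mathcal{X}}_1$; the conditional mean is $c E[\mathcal{R}] + d E[\mathcal{Z}_1\mid \boldsymbol{\mathcal{X}}_1]$, whose variance is $d^2 \var(E[\mathcal{Z}_1\mid \boldsymbol{\mathcal{X}}_1])$ (the deterministic $c E[\mathcal{R}]$ drops out). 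Summing and using the standard law-of-total-variance identity for $\mathcal{Z}_1$ yields
$$
\var(\mathcal{R}) = \rho_2 \var(\mathcal{R}) + d^2 \var(\mathcal{Z}_1),
$$
so $\var(\mathcal{R}) = d^2 \var(\mathcal{Z}_1)/(1-\rho_2)$. Note $\rho_2 < 1$ is necessary here; this is ensured by the standing contraction assumption $d>0$ together with finiteness of the solution, which can be justified as in Theorem~\ref{thm:treelim}.

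Finally, I would compute the conditional mean and variance of $\mathcal{R}^*$ directly. Conditioning on $\boldsymbol{\mathcal{X}}_\emptyset$, the coefficients $\mathcal{N}_\emptyset, \mathcal{C}_1, \mathcal{C}_2,\ldots$ are constants, $\mathcal{Z}_\emptyset$ has conditional distribution $\nu(\boldsymbol{\mathcal{X}}_\emptyset)$, and the $\mathcal{R}_j$ are i.i.d.\ with mean $E[\mathcal{Z}_1]$ and variance $d^2\var(\mathcal{Z}_1)/(1-\rho_2)$, all mutually independent. Thus
$$
E[\mathcal{R}^*\mid \boldsymbol{\mathcal{X}}_\emptyset] = \Big(\sum_{j=1}^{\mathcal{N}_\emptyset}\mathcal{C}_j\Big) E[\mathcal{Z}_1] + dE[\mathcal{Z}_\emptyset\mid \boldsymbol{\mathcal{X}}_\emptyset] = cE[\mathcal{Z}_1] + dE[\mathcal{Z}_\emptyset\mid\boldsymbol{\mathcal{X}}_\emptyset],
$$
and
$$
\var(\mathcal{R}^*\mid \boldsymbol{\mathcal{X}}_\emptyset) = \sum_{j=1}^{\mathcal{N}_\emptyset} \mathcal{C}_j^2 \var(\mathcal{R}) + d^2 \var(\mathcal{Z}_\emptyset\mid \boldsymbol{\mathcal{X}}_\emptyset),
$$
which are the stated formulas after substituting $\var(\mathcal{R})$.

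The only real subtlety is justifying the independence claim between $\{\mathcal{R}_j\}$ and $\boldsymbol{\mathcal{X}}_\emptyset$: this relies on the delayed Galton-Watson structure, where only the root's mark is size-biased while the subtrees under each child are constructed from i.i.d.\ non-root marks and independent signal sequences. I would state this carefully once at the outset and then the two moment computations are routine. The rest is a bookkeeping exercise combining the SFPE with the law of total variance.
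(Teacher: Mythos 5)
Your proposal is correct and follows essentially the same route as the paper's own proof: the SFPE representation $\mathcal{R}^* = \sum_{j=1}^{\mathcal{N}_\emptyset}\mathcal{C}_j\mathcal{R}_j + d\mathcal{Z}_\emptyset$ with the $\{\mathcal{R}_j\}$ independent of $\boldsymbol{\mathcal{X}}_\emptyset$, the law of total variance conditioned on $\boldsymbol{\mathcal{X}}_1$ to obtain $E[\mathcal{R}] = E[\mathcal{Z}_1]$ and $\var(\mathcal{R}) = d^2\var(\mathcal{Z}_1)/(1-\rho_2)$, and then direct conditioning on $\boldsymbol{\mathcal{X}}_\emptyset$. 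Your explicit flagging of the independence of the children's subtrees from the root's size-biased mark is exactly the structural fact the paper relies on.
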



\begin{remarks} \label{rem:Polarization}\
\begin{enumerate}[leftmargin=*]
\item The unconditional terms in both the mean and variance correspond to the neighbors' contribution, which in our model are equally likely to hold opinions of any type. As we can see from the expression for the conditional mean, the term $cE[\mathcal{Z}_1]$ reflects the entire network's mean opinion, while the term $d E[\mathcal{Z}_\emptyset | \boldsymbol{\mathcal{X}}_\emptyset]$ can shift the mean opinion towards one end of the spectrum depending on the individual's profile.

\item Note that the variance of the stationary opinion over the entire network corresponds to $\var(\mathcal{R}^*)$, which by Proposition~\ref{prop:consensus} is given by:
$$
\var(\mathcal{R}^*) = d^2 \var(\mathcal{Z}_\emptyset)  +  \frac{ \rho_2^*d^2}{1-\rho_2} \var(\mathcal{Z}_1),
$$
which implies that
$$
E\left[ \var(\mathcal{R}^*)  - \var(\mathcal{R}^* |\boldsymbol{\mathcal{X}}_\emptyset ) \right]  = d^2 \var\left( E \left[ \left.\mathcal{Z}_\emptyset \right|\boldsymbol{\mathcal{X}}_\emptyset\right]  \right).
$$
It follows that polarization will occur whenever $d$ is not too small and $\var\left( E \left[ \left.\mathcal{Z}_\emptyset \right|\boldsymbol{\mathcal{X}}_\emptyset\right]  \right)$ is large. 

\item A straightforward way to introduce selective exposure is to allow the media signals $\{ \mathcal{Z}_\mathbf{i}^{(k)}: k \geq 0 \}$ to depend on the internal opinion $\mathcal{Q}_\mathbf{i}$ in such a way that if $\mathcal{Q}_\mathbf{i} > 0$, then the media signals $\{ \mathcal{Z}_\mathbf{i}^{(k)}: k \geq 0 \}$ tend to be close to $+1$, while for $\mathcal{Q}_\mathbf{i} < 0$ the media signals tend to be close to $-1$.  Our numerical examples below illustrate different levels of correlation between an individual's internal opinion and the media signals they receive. 

\item We point out that the confirmation bias that our results in this section can cover is limited to the media signals through selective exposure, in other words, it does not apply to connections between individuals. The more realistic setting where individuals with opposing views on a topic are less likely to be connected can be incorporated into the model through the underlying graph, say by assuming that $G(V_n, E_n; \mathscr{A}_n)$ is a stochastic block model (SBM). In a SBM one can modulate the connectivity between individuals based on their internal opinion, which would conceivably result in even higher levels of polarization. The SBM is in fact covered by Theorem~\ref{thm:treelim}, however, its local weak limit is a marked multi-type Galton-Watson tree, rather than the single-type considered in Section~\ref{sec:explicit_characterizations}. We will give the characterization of our opinion model with full confirmation bias in a future follow up work. 

\end{enumerate}
\end{remarks}

%

We end this section with a numerical example to illustrate a polarized stationary opinion. The general simulation setup is the same from Section~\ref{sec:consensus}, with the only difference being that now we choose the media signals $\{ Z_i^{(k)}: k \geq 0\}$ to depend on the internal opinion $Q_i$ for each $i \in V_n$.  In Figure~\ref{fig:polarization_biased_media} we chose $Q_i$ to be equally likely $+1$ or $-1$. The media signals are chosen in alignment with the internal opinion, as follows:
\begin{align*}
(Z_i^{(k)} | Q_i = +1) &\sim -1+2\text{Beta}(8,1), \\
(Z_i^{(k)} | Q_i = -1) &\sim -1+2\text{Beta}(1,8),
\end{align*}
for each $i \in V_n$. The corresponding population variance is $\var(\mathcal{R}^*) = 0.1484$, and the conditional means and variances are:
\begin{align*}
E\left[ \mathcal{R}^* | \mathcal{Q}_\emptyset = -1\right] & \approx -0.3684, \qquad E\left[ \mathcal{R}^* | \mathcal{Q}_\emptyset = 1\right]  \approx 0.3684,\\
\var(\mathcal{R}^* | \mathcal{Q}_\emptyset = -1) & = 0.0095, \qquad \var(\mathcal{R}^* | \mathcal{Q}_\emptyset = 1) = 0.0095.
\end{align*}

\begin{figure}[ht]
    \centering
    \includegraphics[scale = 0.6, trim={2.3cm 5cm 2.5cm 5cm},clip]{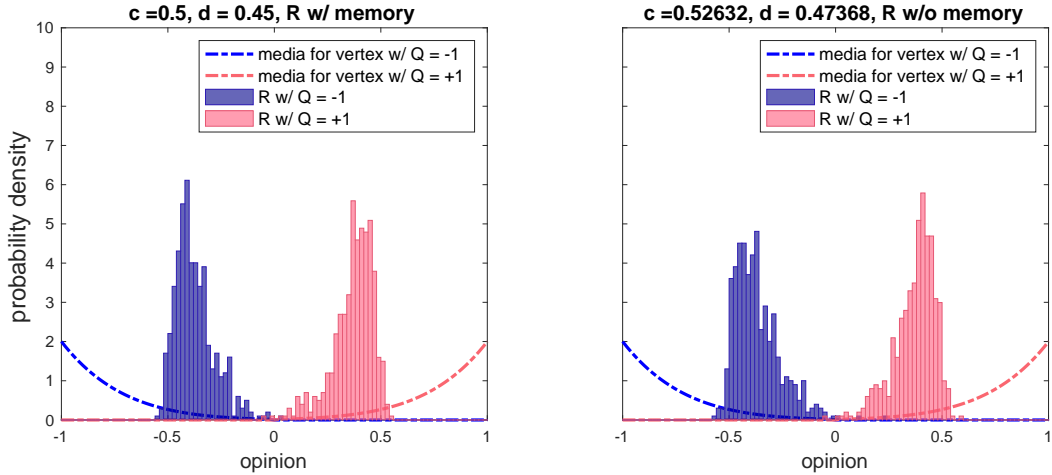}
    \caption{Empirical distribution of opinions in an Erd\H os-R\'enyi graph $G(1000, 0.03)$ with internal opinion $Q_i \sim \text{Unif}\{-1,1\}$. The media signals are biased towards one's internal opinion by setting $(Z_{i}^{(k)} | Q_i = +1) \sim -1+2\text{Beta}(8,1)$ and $(Z_i^{(k)}| Q_i = -1) \sim -1 + 2\text{Beta}(1,8)$.}\label{fig:polarization_biased_media}
\end{figure}

\subsection{Stubborn agents}

The numerical examples in the previous two sections were designed to have an average in-degree/out-degree of 30, which means that in practice there are very few, if any, vertices with zero in-degree. However, it is easy to incorporate a positive fraction of vertices with no in-degree by choosing an inhomogeneous random graph model, e.g., a configuration model, a Chung-Lu model, a Norros-Reittu model, or a generalized random graph. These models have parameters that regulate the in-degree/out-degree distribution of the vertices in the graphs they produce, so simply choosing a distribution where zero in-degree occurs with positive probability can give the desired result. Once the underlying graph is chosen to produce a positive fraction of zero in-degree vertices, we can model the presence of disruptive ``stubborn agents" or ``bots" by giving them an internal opinion that is extreme and choosing their media signals to copy their internal opinion (so that they never change their opinion). Our setting even allows us to have both ordinary stubborn agents (who may espouse typical internal opinions and/or still be influenced by the media) and bots by setting the vertex attribute $S_i = 1$ for the bots and $S_i = 0$ for all other vertices. 

We point out that modeling the presence of stubborn agents as we described above is not equivalent to modeling a full confirmation bias, since it only allows us to separate the behavior of certain individuals based on whether they have zero in-degree/out-degree or not, but it does not allow us to modulate the connectivity between regular vertices based on their internal opinions. As explained in Remark~\ref{rem:Polarization}(4), the easiest way to model individual-level confirmation bias would be to assume $G(V_n, E_n; \mathscr{A}_n)$ is a SBM. 

Given that the setting of Section~\ref{sec:explicit_characterizations} excludes the SBM, bots cannot target individuals based on their internal opinions, which reduces their effectiveness for polarizing the stationary opinion distribution. To see why this is the case, suppose one introduces bots that are equally likely to have opinions in $\{-1,1\}$. Then, each regular individual will be equally likely to have a $-1$ bot as an inbound neighbor than a $+1$ bot, hence balancing the effect of bots on the overall network. However, if the bots are unbalanced, in the sense that there is a greater proportion of bots with opinion $+1$ than bots with opinion $-1$, or vice versa, then they have the power to shift the entire stationary opinion distribution in their direction. Figure~\ref{fig:unbalanced_stubborn} depicts exactly this situation on a graph that is very similar to the one used in Figure~\ref{fig:polarization_biased_media}, with the exception of the presence of bots. 

\begin{figure}[ht]
    \centering
    \includegraphics[scale = 0.6, trim={2.3cm 5cm 2.5cm 5cm},clip]{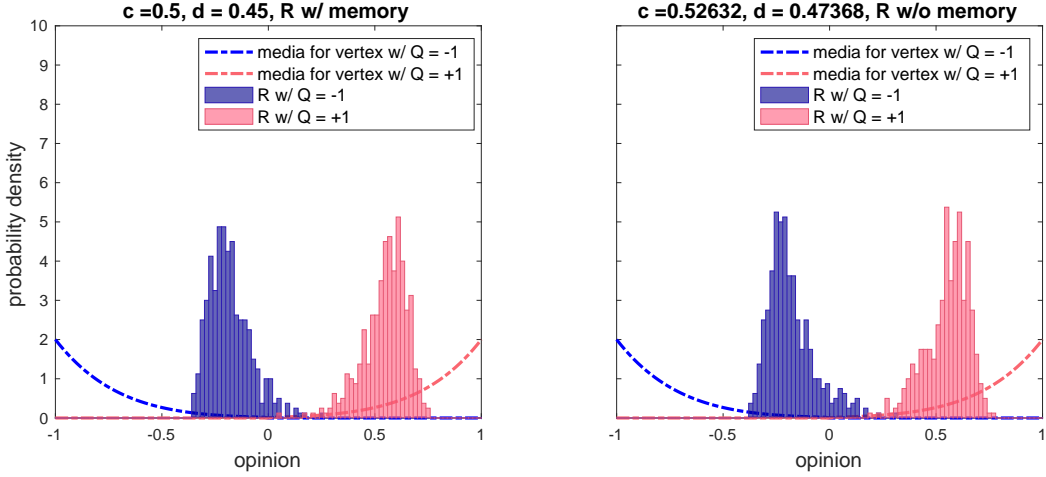}
    \caption{Empirical distribution of opinions in an inhomogeneous random digraph with 800 regular vertices that are connected using an Erd\H os-R\'enyi graph $G(800, 0.03)$ with internal opinions $(Q_i | S_i = 0) \sim \text{Unif}\{-1,1\}$. Their media signals are biased towards one's internal opinion by setting $(Z_i^{(k)}| Q_i = +1, S_i=0)\sim -1+2\text{Beta}(8,1)$ and $(Z_i^{(k)}| Q_i = -1, S_i=0)\sim -1+2\text{Beta}(1,8)$. In addition to the 800 regular vertices, the graph has 200 bots that have zero in-degree, internal opinions $(Q_i|S_i = 1) =1$, and media signals $(Z_i^{(k)}| S_i = 1) = 1$ for all $k \geq 0$. Bots connect to regular vertices using independent Bernoulli$(0.03)$ trials. } \label{fig:unbalanced_stubborn}
\end{figure}
\subsection{Memory vs no-memory} \label{sec:memory}

All the numerical experiments that have been presented in the previous sections have included histograms for the stationary opinion on the network for both the general recursion $(c+d < 1)$ and the no-memory recursion $(c+d = 1)$. In those examples the difference between the two cases is very subtle. To better understand the impact of the autoregressive term in the general case, it is useful to consider scenarios where the variance of the stationary opinion, $\var(\mathcal{R}^*)$ is relatively large.   Intuitively, the autoregressive term is slowing down the speed at which individuals update their opinions, and therefore, we can expect individual trajectories to be smoother, which in turn would imply a less variable stationary opinion. This is indeed the case, as Proposition~\ref{prop:memory} shows. 

\begin{prop}\label{prop:memory}
Suppose the assumptions of Theorems~\ref{thm:treelim} hold, with a (delayed) marked Galton-Watson tree  $\mathcal{T}(\boldsymbol{\mathcal{X}})$ as the local limit. Let $\mathcal{M}^*$ and $\mathcal{R}^*$ denote the limiting opinion for the general recursion
\begin{equation} \label{eq:recursion_on_graph} 
\mathcal{M}_\mathbf{i}^{(k+1)} = \sum_{j=1}^{\mathcal{N}_\mathbf{i}} \mathcal{C}_{(\mathbf{i}, j)} \mathcal{M}_{(\mathbf{i},j)}^{(k)} + \ \mathcal{W}_\mathbf{i}^{(k)} + (1-c-d) \mathcal{M}_\mathbf{i}^{(k)}, \qquad \mathbf{i} \in \mathcal{T}(\boldsymbol{\mathcal{X}}), \qquad k \geq 0,
\end{equation}
with $c,d > 0$ and $c+d <1$, and the no-memory recursion
\begin{equation} \label{eq:no_memory_recursion}
\mathcal{R}_\mathbf{i}^{(k+1)} = \sum_{j=1}^{\mathcal{N}_\mathbf{i}} \frac{\mathcal{C}_{(\mathbf{i}, j)}}{c+d} \,  \mathcal{R}_{(\mathbf{i},j)}^{(k)} + \frac{\mathcal{W}_\mathbf{i}^{(k)}}{c+d} , \qquad \mathbf{i} \in \mathcal{T}(\boldsymbol{\mathcal{X}}), \qquad k \geq 0,
\end{equation}
Then, if $P(\mathcal{N}_\emptyset > 0) = P(\mathcal{N}_1 > 0) = 1$, we have
$$
\var(\mathcal{M}^*) \leq \var( \mathcal{R}^*).
$$
\end{prop}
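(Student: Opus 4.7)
The plan is to compare $\mathcal{M}^*$ and $\mathcal{R}^*$ through the explicit series representations in Proposition~\ref{prop:explicit}. Writing $u := 1-c-d \in (0,1)$, \eqref{eq:sol_memory} gives
\[
\mathcal{M}^* = \sum_{l\geq 0}\sum_{|\mathbf{j}|=l} \Pi_\mathbf{j} \sum_{s\geq l} \binom{s}{l} u^{s-l}\, \mathcal{W}_\mathbf{j}^{(s)}.
\]
The no-memory recursion \eqref{eq:no_memory_recursion} is obtained from \eqref{eq:recursion_on_graph} by rescaling weights and signals by $1/(c+d)$, yielding effective parameters $c'+d'=1$; substituting into \eqref{eq:sol_no_memory} produces
\[
\mathcal{R}^* = \sum_{l\geq 0}\sum_{|\mathbf{j}|=l} \frac{\Pi_\mathbf{j}}{(c+d)^{l+1}}\, \mathcal{W}_\mathbf{j}^{(l)}.
\]
Both series are linear functionals of the \emph{same} conditionally independent family $\{\mathcal{W}_\mathbf{j}^{(s)}\}$, so the strategy is to condition on $\mathcal{F} := \sigma(\mathcal{T}(\boldsymbol{\mathcal{X}}))$, compare coefficient by coefficient, and then integrate via the law of total variance.

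First I would check that the conditional means agree. Setting $\mu_\mathbf{j} := E[\mathcal{W}_\mathbf{j}^{(0)}\mid \boldsymbol{\mathcal{X}}_\mathbf{j}]$ and using $\sum_{m\geq 0}\binom{m+l}{l} u^m = (1-u)^{-(l+1)}$ with $1-u = c+d$, the coefficient of $\Pi_\mathbf{j}\mu_\mathbf{j}$ in $E[\mathcal{M}^*\mid \mathcal{F}]$ collapses to $(c+d)^{-(l+1)}$, exactly matching the corresponding coefficient in $E[\mathcal{R}^*\mid \mathcal{F}]$. Hence $\var(E[\mathcal{M}^*\mid \mathcal{F}]) = \var(E[\mathcal{R}^*\mid \mathcal{F}])$, and it suffices to establish $\var(\mathcal{M}^*\mid \mathcal{F}) \leq \var(\mathcal{R}^*\mid \mathcal{F})$ almost surely.

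Conditional independence of $\{\mathcal{W}_\mathbf{j}^{(s)}\}$ across $(\mathbf{j},s)$ splits both conditional variances as sums over nodes of $\Pi_\mathbf{j}^2\sigma_\mathbf{j}^2$ (with $\sigma_\mathbf{j}^2 := \var(\mathcal{W}_\mathbf{j}^{(0)}\mid \boldsymbol{\mathcal{X}}_\mathbf{j})$) times a deterministic time-weight factor. The comparison then reduces, uniformly in $\mathbf{j}$, to the purely analytic inequality
\[
\sum_{m\geq 0} \binom{m+l}{l}^2 u^{2m} \;\leq\; \frac{1}{(1-u)^{2(l+1)}}, \qquad l\geq 0,\ u\in[0,1).
\]
To prove this, expand the right-hand side as the Cauchy square of $(1-u)^{-(l+1)} = \sum_m \binom{m+l}{l} u^m$: its coefficient of $u^{2m}$ equals $\sum_{k=0}^{2m}\binom{k+l}{l}\binom{2m-k+l}{l}$, and retaining only the middle term $k=m$ already yields $\binom{m+l}{l}^2$, which is precisely the coefficient of $u^{2m}$ on the left. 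Since the left-hand side has no odd powers of $u$ while the right-hand side has non-negative coefficients, the bound holds termwise as a power-series comparison.

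The main obstacle is conceptual rather than computational: one must notice that \eqref{eq:sol_memory} and \eqref{eq:sol_no_memory} can be aligned on the same underlying signal family by rescaling \eqref{eq:no_memory_recursion}, so that coefficient-wise comparison is meaningful at all. Once this alignment is in place, the mean identity is a single generating-function calculation, the variance inequality reduces to a Vandermonde-style one-term bound, and combining the two pieces via total variance yields $\var(\mathcal{M}^*) \leq \var(\mathcal{R}^*)$.
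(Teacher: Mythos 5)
Your argument is correct, and it takes a genuinely different route from the paper. The paper proves this proposition by invoking the closed-form variance formulas from the appendix (Theorem~\ref{thm:LimitR_MeanVariance} and Remark~\ref{Rem:NoBotsMeanVar}), observing that $\var(\mathcal{M}^*)-\var(\mathcal{R}^*)$ consists of exactly two terms, one multiplying $E[\var(\mathcal{Z}_\emptyset|\boldsymbol{\mathcal{X}}_\emptyset)]$ and one multiplying $E[\var(\mathcal{Z}_1|\boldsymbol{\mathcal{X}}_1)]$, and checking that both coefficients are nonpositive; the harder coefficient is handled by the bound $p_{T+1}\in(0,1)$, where $p_l=\sum_s \binom{s+l}{s}^2(1-c-d)^{2s}(c+d)^{2(l+1)}$. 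Your termwise inequality $\sum_{m}\binom{m+l}{l}^2u^{2m}\le(1-u)^{-2(l+1)}$ is precisely the statement $p_l\le 1$, so the same analytic kernel drives both proofs --- but the paper merely asserts it, whereas your Cauchy-square argument (keeping only the diagonal term $k=m$ in the convolution) actually proves it. What your route buys: it bypasses the appendix machinery entirely, it delivers the strictly stronger pointwise conclusions $E[\mathcal{M}^*\mid\mathcal{F}]=E[\mathcal{R}^*\mid\mathcal{F}]$ and $\var(\mathcal{M}^*\mid\mathcal{F})\le\var(\mathcal{R}^*\mid\mathcal{F})$ almost surely, and it uses only conditional independence of the signals given the marked tree, so it does not need the (delayed) Galton-Watson structure or the condition $P(\mathcal{N}_\emptyset>0)=P(\mathcal{N}_1>0)=1$ at all. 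What the paper's route buys is the explicit quantitative expressions for both variances, which it wants anyway for Propositions~\ref{prop:consensus} and~\ref{prop:polarization}. The only point worth making explicit in a final write-up is the routine justification for exchanging (conditional) variance with the infinite sums, which follows from $\sum_{|\mathbf{j}|=l}\Pi_{\mathbf{j}}\le c^l$, the boundedness of the signals, and $u<1$.
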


We illustrate this phenomenon with a numerical example having non-negligible variance. In Figure~\ref{fig:memory_hist} we simulate again an Erd\H os-R\'enyi graph with $n = 1000$ vertices and edge probabilities $p = 0.03$, as in previous experiments. The media signals $\{Z_i^{(k)}: k \geq 0, i \in V_n\}$ were sampled uniformly on the interval $[-1,1]$, and independently of the internal opinions. The internal opinions have no impact on the dynamics of the model. The choice of $c$ and $d$ in the no-memory case was done proportionally to the values of $c$ and $d$ in the general case. There are few, if any, vertices with zero in-degree, and they have no special attributes. 

\begin{figure}
    \centering
    \includegraphics[scale = 0.6, trim={2.3cm 5cm 2.5cm 5cm},clip]{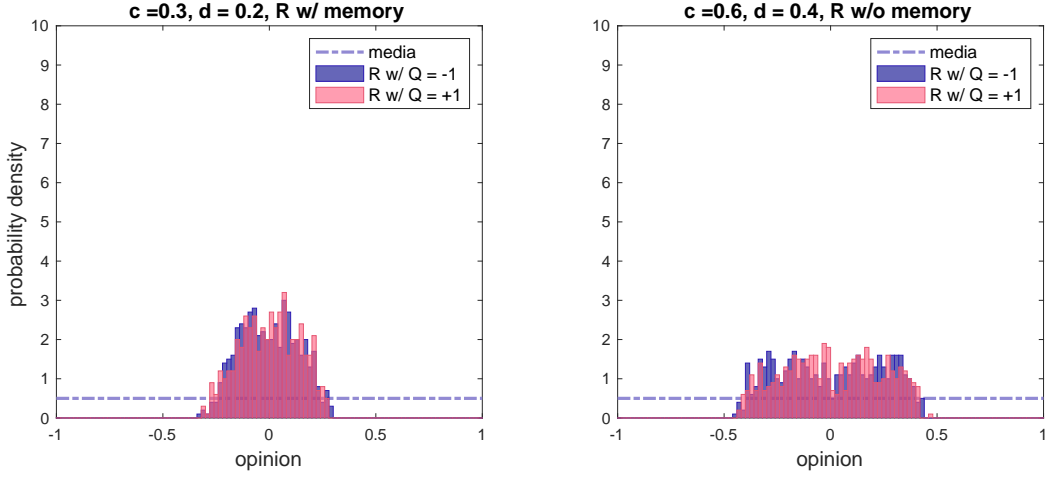}
    \caption{Empirical distribution of opinions in an Erd\H os-R\'enyi graph $G(1000, 0.03)$ with internal opinion $Q_i \sim \text{Unif}\{-1,1\}$ and media signals $Z_{i}^{(k)}\sim \text{Unif}(-1,1)$, independent of the vertex attributes.} \label{fig:memory_hist}
\end{figure} 

As mentioned earlier, the effect of the autoregressive term also makes the individual trajectories $\{ R_i^{(k)}: k \geq 0\}$ for each vertex $i \in V_n$, smoother. Figure~\ref{fig:memory_trajectory} shows the trajectories of two vertices, one starting from an initial state $R_i^{(0)} = -1$ and another starting from the initial state $R_j^{(0)} = +1$. The setting for the simulation is exactly the same as the one for Figure~\ref{fig:memory_hist}. As we can see, the opinions for the no-memory recursion~\eqref{eq:no_memory_recursion} appear to converge to stationarity in roughly one iteration, where they fluctuate around the interval $[-0.5, 0.5]$. The general recursion~\eqref{eq:recursion_on_graph}, on the other hand, takes about 10 iterations to reach stationarity, and once there it fluctuates around a smaller interval.  

\begin{figure}
    \centering
    \includegraphics[scale = 0.6, trim={2cm 7cm 2.5cm 7cm},clip]{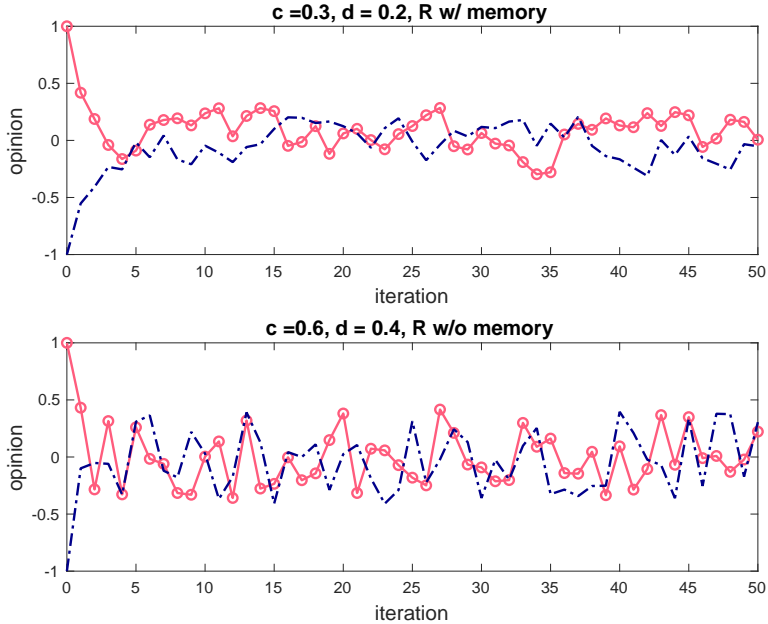}
    \caption{Trajectories of a typical vertex in an Erd\H os-R\'enyi graph $G(1000, 0.03)$ with internal opinion $Q_i \sim \text{Unif}\{-1,1\}$ and media signals $Z_{i}^{(k)}\sim \text{Unif}(-1,1)$, independent of the vertex attributes. The figure contains the trajectories of two distinct vertices, one initialized at $R_i^{(0)} = -1$ and another at $R_j^{(0)} = +1$.}     \label{fig:memory_trajectory}
\end{figure}

\section{Related Literature}\label{sec:literature}

The early work in opinion dynamics modeling dates back to the 1960s, when the first opinion-averaging models on strongly connected graphs appeared \cite{abelson1964mathematical, degroot1974reaching, french1956formal}. Among these models, the classical DeGroot model \cite{degroot1974reaching} has been the paradigm for discrete-time opinion dynamics on a continuous state space. The main question of interest in both \cite{degroot1974reaching}, and the latter work in \cite{berger1981necessary}, is the existence of consensus, interpreted as the existence of a single opinion that all individuals in the network will eventually agree on. 

The study of disagreement and/or polarization is more recent, starting with the work by Friedkin and Johnsen  \cite{friedkin1990social}, where they extended the DeGroot model to incorporate the concept of social prejudice to account for disagreements in social networks. Mathematically, the original Friedkin-Johnsen model is a special case of \eqref{eq:OpinionRec}, where $c+d = 1$ and the external signals $\{W_i^{(k)}: k \geq 0, i \in V\}$ are assumed to be of the form $W_i^{(k)} = dQ_i$ for all $k \geq 0$. That is, the model does not have any randomness but it does allow for vertex attributes to influence the evolution of opinions. The convergence of the Friedkin-Johnsen model under asynchronous updating was studied in \cite{parsegov2016novel, ravazzi2014ergodic}, and the question of consensus formation under the effect of increasing peer pressure has been investigated in \cite{semonsen2018opinion}. Empirical studies of the prevalence of persistent disagreement in society include \cite{baron1996social,dimaggio1996have}.

There has also been significant work on opinion dynamics models that incorporate some randomness in the updating rule. The random noise can be interpreted as media signals \cite{yang2017innovation}, individuals' free will \cite{pineda2009noisy}, environmental uncertainty \cite{zhao2016bounded}, etc. The noisy DeGroot and Friedkin-Johnsen models have been numerically examined in \cite{konovalchikova2020opinion, stern2021impact}, and theoretically in \cite{xiao2007distributed, yang2017innovation}. The model in \cite{xiao2007distributed} as well as Model II in \cite{yang2017innovation} correspond to taking the external signals $\{W_i^{(k)}: k \geq 0, i \in V\}$ i.i.d.~and independent of the underlying graph in \eqref{eq:OpinionRec}. The main question of interest in these last two works is the existence or non-existence of consensus, and their findings are consistent with ours. Although some of the references mentioned above do use random graphs in their numerical analyses, none of them use random graph techniques to characterize the stationary opinion distribution. 

Our work in this paper is also closely related to the problem of modeling the bias in the formation of opinions and how it can lead to polarization \cite{bakshy2015exposure,garimella2018political,lord1979biased,munro2002biased}. Theoretically, this type of bias has mostly been tackled by opinion models where during the updating step, a vertex only considers the opinions of neighbors that are close enough to their own current opinion. These types of models are generically known as bounded confidence models, with two of the most popular ones being the Deffuant–Weisbuch model \cite{carro2013role, deffuant2000mixing, weisbuch2004bounded} and the Hegselmann-Krause model \cite{canuto2012eulerian, hegselmann2002opinion,  mirtabatabaei2011opinion}. The analysis of bounded confidence models often relies on continuous-time dynamical systems, which have been shown to lead to consensus, fragmentation, and polarization, depending on their parameters. However, the predictions based on continuous dynamical systems do not always agree with more realistic simulation-based experiments. Another model worth mentioning due to its ability to predict polarization is the generalization of the DeGroot model proposed in \cite{dandekar2013biased}. There, the influence of neighboring opinions is determined by a bias parameter that produces non-linear updates, and that is reminiscent of preferential attachment rules. 

Our approach to modeling polarization in the current paper is very different from the works we just described, and is based on the introduction of selective exposure into the external signals $\{ W_i^{(k)}: k \geq 0, i \in V\}$, while preserving the linearity of the model. As we described in Remark~\ref{rem:Polarization}(4), we can model individual-level confirmation bias by selecting the underlying graph to prioritize connections between individuals holding similar internal opinions (e.g., by using a SBM), but we have left that analysis for future work. Another distinguishing feature of our work is the use of random graph theory to characterize the stationary opinion distribution on large complex networks, which to the best of our knowledge, has not been done in the existing literature on opinion dynamics. 

Before concluding our overview of opinion modeling on directed networks, we would like to mention some other directions that people have considered. The DeGroot and Friedkin-Johnsen models were generalized in \cite{rey2017evolution} to include more than one topic at a time. In \cite{mackin2019maximizing} the authors consider the problem of maximizing diversity in the opinions by strategically placing two disruptive stubborn agents on a fixed graph. The work in \cite{proskurnikov2015opinion} analyzes the DeGroot model on a signed graph, where consensus, disagreement, and polarization are all possible. In addition, the dynamics of the DeGroot model (often in continuous time) have been generalized to allow the weights given to neighbors to depend on the evolution of the opinion process itself. Although the underlying graph is fixed in most cases, the fact that edges can appear or disappear over time depending on the evolution of the opinions allows these models to describe the co-evolution of the opinion process and that of the social network. Some examples of this type of work covering different mechanisms in which the network changes are \cite{auletta2019consensus, friedkin2011formal, jia2015opinion, ye2017analysis}.

To end this section, we briefly mention that the study of stochastic recursions on random graphs and their connections to their local weak limits is an area of active research. A classical reference for stochastic recursions on trees and branching SFPEs is \cite{Aldo_Band_05}, and a rigorous connection between recursions on directed random graph models and the solutions to branching SFPEs can be found in \cite{Fra_Lin_Olv_22}. Related work for interacting diffusions on unimodular Galton-Watson trees can be found in \cite{Kavita_etal_2019, Kavita_etal_2020}. The notion of strong couplings and the many families of random graph models for which strong couplings exist can be found in \cite{Olvera_21}. A thorough discussion of complex networks and features such as the small-world phenomenon or the scale-free property that are prevalent in social networks can be found in \cite{Hofstad1,Hofstad2}. As we pointed out earlier, the work in this paper covers any directed random graph sequence having a locally finite limit, which includes most of the popular models used for replicating social networks. 

\section{Proofs of the main results}\label{sec:proofs}
In this section, we will prove all the results presented in the paper. For the reader's convenience, we have organized them in the same order in which they have appeared.

\subsection{Stationary Behavior}

We start with the proof of Theorem~\ref{thm:graphlim}, which establishes the convergence in distribution of the Markov chain $\{\mathbf{R}^{(k)}: k \geq 0\}$ on any fixed directed graph $G$. As mentioned earlier, the existence of a stationary distribution is a consequence of Theorem~1.1 in \cite{Diac_Freed_99}, however, we include a short proof here since it allows us to explicitly describe the connection between $\{\mathbf{R}^{(k)}: k \geq 0\}$ and the characterizations for a typical component provided by Theorem~\ref{thm:treelim}. The first step of the proof shifts the support of the recursion to make it nonnegative. The second step uses the fact that the external signals $\{ W_i^{(k)}: k\geq 0, i \in V\}$ are conditionally independent given the vertex marks $\{\mathbf{x}_i: i \in V\}$, with the $\{ W_i^{(k)}: k \geq 0\}$ conditionally i.i.d.~given $\mathbf{x}_i$, to create a time-reversed process that converges monotonically. The condition $d > 0$ is used to ensure that the limit does not depend on the initial vector $\mathbf{R}^{(0)}$, in other words, that the distribution of the limit corresponds to the unique stationary distribution of the process.


Let $G=(V,E;\mathscr{A})$ be a (potentially infinite) locally finite vertex-weighted directed graph with marks $\{\mathbf{x}_i: i \in V\}$.
Before going into the proof of the theorem, we need to introduce the supremum norm and its induced operator norm for functions of the vertices of $G$. For $f:V\to\mathbb{R}$ we define the norm
$$
\|f\|_\infty = \sup_{v\in V} f(v)
$$
and let $L^\infty(V) = \{f: \|f\|_\infty <\infty \}$. Next, we define an operator $\Delta$ related to our recursion \ref{eq:OpinionRec} that acts on $L^\infty(V)$ as follows 
\begin{equation}\label{eq:delta-def}
\Delta f(i) = \sum_{r=1}^{d_i^-} c(i,r) f(\ell(i,r)) + (1-c-d) f(i),
\end{equation}
where $\ell(i,r)$ is the vertex label of the $r$th inbound neighbor of $i$.
Finally, we define the operator norm
$$
\|\Delta\|_\infty = \sup_{f\in L^\infty(V)} \frac{\|\Delta f\|_\infty}{\|f\|_\infty}.
$$
Note that for all $i\in V$
$$
|\Delta f(i)| \leq \sum_{r=1}^{d_i^-} c(i,r) \|f\|_\infty + (1-c-d) \|f\|_\infty \leq (1-d) \|f\|_\infty
$$
which implies that
$\|\Delta\|_\infty \leq 1-d <1$. We are now ready to prove Theorem~\ref{thm:graphlim}. 

\begin{proof}[Proof of Theorem~\ref{thm:graphlim}.]
To start, note that the support of recursion \eqref{eq:OpinionRec} is $[-1,1]$. Therefore, by adding 1 to both sides of \eqref{eq:OpinionRec}, we obtain a new recursion given by 
\begin{equation}\label{eq:shifted_recursion}
\tilde{R}_i^{(k+1)} = \sum_{r = 1}^{d_i^-} c(i,r) \tilde{R}_{\ell(i,r)}^{(k)} +  \tilde{W}_i^{(k)} + (1-c-d) \tilde{R}_i^{(k)},
\end{equation}
with $\tilde{W}_i^{(k)} = W_i^{(k)}+d+c-\sum_{r=1}^{d_i^-} c(i,r)$. Since $\tilde{R}_i^{(k)} = R_i^{(k)}+1$, the new recursion has support $[0,2]$.

Note that the opinions $\mathbf{\tilde R}^{(k)}=\{R^{(k)}_i: i\in V\}$ and external signals $\mathbf{\tilde W}^{(k)}=\{W^{(k)}_i: i\in V\}$ are elements of $L^\infty(V)$. Moreover, using the $\Delta$ operator defined in \eqref{eq:delta-def} the shifted recursion \eqref{eq:shifted_recursion} can be written as 
$$
\mathbf{\tilde{R}}^{(k+1)} = \Delta \mathbf{\tilde{R}}^{(k)} + \mathbf{\tilde{W}}^{(k)}.
$$
Iterating this recursion we obtain that
$$
\mathbf{\tilde{R}}^{(k)} = \Delta^{k}\mathbf{\tilde{R}}^{(0)} + \sum_{r = 0}^{k-1} \Delta^r\mathbf{\tilde{W}}^{(k-r-1)}.
$$

Consider now the time-reversed process:
$$
\mathbf{\tilde{B}}^{(k)} := \sum_{r = 0}^{k-1} \Delta^r\mathbf{\tilde W}^{(r)},
$$
and note that since the external signals $\{\mathbf{\tilde{W}}^{(k)}: k\geq 0\}$ are conditionally i.i.d.~given the marks $\{ \mathbf{x}_i: i \in V\}$, we have that 
$$
\mathbf{\tilde{R}}^{(k)} \eqlaw \Delta^{k}\mathbf{\tilde{R}}^{(0)} + \mathbf{\tilde{B}}^{(k)}.
$$
Since all the summands in $\mathbf{\tilde{B}}^{(k)}$ are nonnegative, it follows that it is monotone increasing, and therefore  
$$
\mathbf{\tilde{B}}^{(k)} \nearrow   \sum_{r = 0}^\infty \Delta^r\mathbf{\tilde W}^{(r)} =: \mathbf{\tilde{B}} \qquad k \to \infty.
$$
Now use the triangle inequality and the observation that $\|\mathbf{\tilde W}^{(k)}\|_\infty\leq 2$ to obtain that
$$
\|\mathbf{\tilde{B}}^{(k)} - \mathbf{\tilde{B}} \|_\infty 
\leq \sum_{r = k}^\infty \| \Delta^r \mathbf{\tilde W}^{(k)}\|_\infty \leq 2 \sum_{r=k}^\infty \|\Delta\|_\infty^r \leq 2 \sum_{r=k}^\infty (1-d)^k \leq 2(1-d)^k (1/d) \to 0,
$$
as $k \to \infty$. Similarly, since $\|\mathbf{\tilde{R}}^{(0)}\|_\infty\leq 2$ we have
$$
\| \Delta^{k}\mathbf{\tilde{R}}^{(0)} \|_\infty \leq 2(1-d)^k \to 0,
$$
as $k \to \infty$. Combining the two upper bounds, it follows that
\begin{equation*}
\|\mathbf{\tilde{R}}^{(k)} - \mathbf{\tilde{B}}\|_\infty 
\leq \| \Delta^{k}\mathbf{\tilde{R}}^{(0)} \|_\infty + \|\mathbf{\tilde{B}}^{(k)} - \mathbf{\tilde{B}}\|_\infty
\leq (1-d)^{k}(2/d+2),
\end{equation*}
and so $ \|\mathbf{\tilde{R}}^{(k)} - \mathbf{\tilde{B}}\|_\infty \to 0$ as $k \to \infty$. This completes the proof of Theorem~\ref{thm:graphlim} because we have $\mathbf{\tilde{R}}^{(k)} \eqlaw \mathbf{R}^{(k)}+ \mathbf{1}$, so we can take $\mathbf{R} = \mathbf{\tilde{B}}-\mathbf{1}$, where $\mathbf{1}$ is the vector of ones. 
\end{proof}

\begin{remark} 
In the next sections we will take the graph to be random, in which case the convergence above is to be understood in a quenched sense. That is, if we let $\mathscr{G}_n = \sigma(G(V_n, E_n; \mathscr{A}_n))$ be the $\sigma$-algebra generated by the graph, and $\mathbf{P}_n(\,\cdot\,) = P(\,\cdot\,|\mathscr{G}_n)$ be the corresponding conditional probability measure, then Theorem~\ref{thm:graphlim} holds $\mathbf{P}_n$-almost surely.
\end{remark}

\subsection{Typical Behavior} \label{sec:proofThm2}

In this section we prove Theorem~\ref{thm:treelim}. From this point onwards, we assume that $\{ G(V_n, E_n; \mathscr{A}_n): n \geq 1\}$ is a sequence of random graphs for which a strong coupling with a locally finite marked, rooted, directed graph $\mathcal{G}(\boldsymbol{\mathcal{X}})$ exists. Recall that $I_n$ is a uniformly chosen vertex in $V_n$, which corresponds to the root $\emptyset$ of $\mathcal{G}(\boldsymbol{\mathcal{X}})$. 

Let $\mathscr{F}_n = \sigma( \mathscr{A}_n)$ denote the $\sigma$-algebra generated by the vertex attributes $\mathscr{A}_n$, which does not include the presence/absence of edges in the graph $G(V_n, E_n; \mathscr{A}_n)$, and let $\mathbb{P}_n(\,\cdot\, | \mathscr{F}_n)$ and $\mathbb{E}_n[ \,\cdot\, | \mathscr{F}_n]$ denote the corresponding conditional probability and expectation. 

Recall that $\{ \mathbf{R}^{(k)}: k \geq 0\}$ is the Markov chain describing the evolution of the opinions on the graph $G(V_n, E_n; \mathscr{A}_n)$. As such, the trajectories $\{ R_i^{(k)}: k \geq 0 \}$ for each of its vertices $i \in V_n$, tend to fluctuate over time. The key to the proof of Theorem~\ref{thm:graphlim} was the construction of a time-reversed process, $\{ \mathbf{\tilde B}^{(k)}: k \geq 0\}$, that had monotone trajectories and whose limit coincided with the stationary distribution of $\{ \mathbf{R}^{(k)} + \mathbf{1}: k \geq 0\}$. The proof of Theorem~\ref{thm:treelim} uses the original trajectories for the finite time part of the theorem, and the time-reversed versions for the statement regarding the stationary vector $\mathbf{R}$. 

To define the relevant processes, given $G(V_n, E_n; \mathscr{A}_n)$ we omit the dependence on $n$ and write $\bar \Delta$ for the operator defined in \eqref{eq:delta-def} that acts on $L^\infty(V_n)$ as:
$$
\bar \Delta f(i) = \sum_{r=1}^{D_i^-} C(i,r) f( L(i,r)) + (1-c-d) f(i),
$$
where $L(i,r)$ is the vertex label  of the $r$th inbound neighbor of $i$. Let $\mathbf{W}^{(k)} = \{ W_i^{(k)}: i \in V_n\}$ be the vector of external signals at time $k$, and note that from the proof of Theorem~\ref{thm:graphlim} we obtain that
$$
\mathbf{R}^{(k)} = \bar \Delta^k \mathbf{R}^{(0)} + \sum_{r=0}^{k-1} \bar \Delta^r \mathbf{W}^{(k-r-1)}.
$$
The time-reversed process we will use is:
$$
\mathbf{B}^{(k)} = \sum_{r=0}^k \bar \Delta^r \mathbf{W}^{(r)},
$$
which although it does not have monotone trajectories (since its support has not been shifted), converges almost surely, as $k \to \infty$, to the stationary vector $\mathbf{R}$. The $i$th coordinates of $\mathbf{R}^{(k)}, \mathbf{R}$, and $\mathbf{B}^{(k)}$ are denoted $R_i^{(k)}$, $R_i$, and $B_i^{(k)}$, respectively. 

We will start by proving a convergence result for the finite time processes $\mathbf{R}^{(k)}$ and $\mathbf{B}^{(k)}$. This is the key step needed in the second moment method that will later be used in the proof of Theorem~\ref{thm:treelim}, since it provides the asymptotic independence of two randomly chosen vertices.

\begin{theorem} \label{thm:LocalWeakConv}
Suppose the graph sequence $\{G(V_n, E_n; \mathscr{A}_n): n \geq 1\}$ admits a strong coupling with a marked, rooted, directed graph $\mathcal{G}(\boldsymbol{\mathcal{X}})$, and assume the conditions of Theorem~\ref{thm:graphlim} hold. Let $g,h: [-1,1] \to \mathbb{R}$ be two bounded and continuous functions, and let $I_n, J_n$ be two independent random variables uniformly chosen in $V_n$; let $\mathbf{R}^{(k)}$ and $\mathbf{B}^{(k)}$ be the vectors defined above. Then, for any $k \geq 1$ there exist i.i.d.~random sequences $\{(\mathcal{R}^{(t)}, \mathcal{B}^{(t)}): 0 \leq t \leq k\}$ and $\{ (\mathcal{\hat R}^{(t)}, \mathcal{\hat B}^{(t)}): 0 \leq t \leq k\}$, whose distribution does not depend on $\mathscr{F}_n$, such that
\begin{align*}
\sup_{0\leq t \leq k} \mathbb{E}_n\left[  \left|( g(R_{I_n}^{(t)}) - g(\mathcal{R}^{(t)}) ) (h(R_{J_n}^{(t)}) - h(\mathcal{\hat R}^{(t)}) ) \right|  \right] &\xrightarrow{P} 0 \\
\sup_{0 \leq t \leq k} \mathbb{E}_n\left[  \left|( g(B_{I_n}^{(t)}) - g(\mathcal{B}^{(t)}) ) (h(B_{J_n}^{(t)}) - h(\mathcal{\hat B}^{(t)}) ) \right|  \right]  &\xrightarrow{P} 0  \qquad n \to \infty.
\end{align*}
Moreover, $\mathcal{B}^{(k)}$ and $\mathcal{\hat B}^{(k)}$ converge almost surely as $k \to \infty$. 
\end{theorem}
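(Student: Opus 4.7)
The plan is to build $\mathcal{R}^{(t)}$ and $\mathcal{B}^{(t)}$ by running the opinion recursion \eqref{eq:OpinionRec} and its time-reversed analogue on the local weak limit $\mathcal{G}_\emptyset(\boldsymbol{\mathcal{X}})$, equipping each node $\mathbf{i}$ with external signals $\mathcal{W}_\mathbf{i}^{(s)}$ that are conditionally i.i.d.~given $\boldsymbol{\mathcal{X}}_\mathbf{i}$ with common law $\nu(\boldsymbol{\mathcal{X}}_\mathbf{i})$, and with an initial opinion at the root drawn from the same mark-dependent distribution used on the finite graph. The hatted sequences are constructed identically on the independent copy $\mathcal{\hat G}_{\hat\emptyset}(\boldsymbol{\mathcal{X}})$ produced by the two-vertex clause of Definition~\ref{def:StrongCoupling}, which automatically makes the two sequences i.i.d.~with a joint distribution not depending on $\mathscr{F}_n$.

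The key structural observation is that $R_i^{(t)}$ and $B_i^{(t)}$ both depend only on the depth-$t$ in-component $G_i^{(t)}(\mathbf{X})$ of $i$, together with the initial opinions and external signals at those vertices. Hence, on the coupling event $\mathcal{E}_{I_n}^{(k,\epsilon)}$ with $k \geq t$, the bijection $\theta$ pairs $G_{I_n}^{(t)}$ with $\mathcal{G}_\emptyset^{(t)}$ and forces the associated marks to agree up to $\epsilon$ in the metric $\rho$. Because $\rho$ contains $|d^- - \tilde d^-|$ (an integer) and the coordinate-wise weight discrepancy, choosing $\epsilon < 1$ forces the in-degrees to match exactly and bounds each $|C(\theta(\mathbf{i}),r) - \mathcal{C}_{(\mathbf{i},r)}|$ by $\epsilon$. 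Using the Lipschitz hypothesis $d_1(\nu(\mathbf{x}),\nu(\mathbf{\tilde x})) \leq K\rho(\mathbf{x},\mathbf{\tilde x})$, I would then enlarge the coupling so that each pair $(W_{\theta(\mathbf{j})}^{(s)}, \mathcal{W}_\mathbf{j}^{(s)})$ is an optimal Wasserstein coupling, and do the same for the initial opinions.

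A short induction on $t$, using $|R|, |\mathcal{R}| \leq 1$ and the fact that $\sum_r C(\theta(\mathbf{i}),r) = \sum_r \mathcal{C}_{(\mathbf{i},r)} = c$ (so weight discrepancies enter only through differences in the neighbor opinions they multiply), would then give
\begin{equation*}
\max_{0 \leq t \leq k} \mathbb{E}_n\bigl[|R_{I_n}^{(t)} - \mathcal{R}^{(t)}|\bigr] \leq \alpha_k(\epsilon) + C_k \,\mathbb{P}_n\bigl((\mathcal{E}_{I_n}^{(k,\epsilon)})^c\bigr),
\end{equation*}
with $\alpha_k(\epsilon) \to 0$ as $\epsilon \to 0$ and $C_k$ deterministic. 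Letting $n \to \infty$ and then $\epsilon \to 0$ gives $\max_{0 \leq t \leq k}\mathbb{E}_n[|R_{I_n}^{(t)} - \mathcal{R}^{(t)}|] \xrightarrow{P} 0$; continuity of $g$ on the compact interval $[-1,1]$ upgrades this to the same statement with $g$ applied, and the boundedness $|h(R_{J_n}^{(t)}) - h(\mathcal{\hat R}^{(t)})| \leq 2\|h\|_\infty$ then yields the product claim for $R$ after invoking the pairwise version $\mathcal{E}_{I_n}^{(k,\epsilon)} \cap \mathcal{E}_{J_n}^{(k,\epsilon)}$ of the strong coupling. The argument for $B$ is identical in structure because $B_i^{(t)}$ also reads only the depth-$t$ in-component.

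The main obstacle, and the only ingredient beyond Theorem~\ref{thm:graphlim} and the strong coupling, is the almost sure convergence of $\mathcal{B}^{(k)}$ and $\mathcal{\hat B}^{(k)}$ as $k \to \infty$ on the potentially infinite limit graph. I would handle this by transferring the proof of Theorem~\ref{thm:graphlim} verbatim: after shifting by $\mathbf{1}$ to obtain nonnegative coordinates, the per-vertex contraction estimate $\|\Delta\|_\infty \leq 1-d$ is a purely local computation and hence holds on $\mathcal{G}_\emptyset(\boldsymbol{\mathcal{X}})$ as well, so the shifted time-reversed process is coordinatewise monotone increasing and uniformly bounded by $2/d$; monotone convergence then delivers the almost sure limit.
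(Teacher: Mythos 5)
Your proposal follows essentially the same route as the paper's proof: construct the limit sequences by running the forward and time-reversed recursions on the strongly coupled local limit and its independent copy, optimally couple the external signals via the Wasserstein--Lipschitz condition on $\nu(\cdot)$, propagate the $\epsilon$-discrepancy through the depth-$t$ in-component using the contraction $\|\Delta\|_\infty \le 1-d$, and obtain the almost sure convergence of $\mathcal{B}^{(k)}$ by transferring the argument of Theorem~\ref{thm:graphlim} to the locally finite limit graph. The only cosmetic difference is that the paper copies the initial opinions exactly across the coupling (setting $\mathcal{R}_i^{(0)} = R_{\theta(i)}^{(0)}$ on the coupled region) rather than re-drawing them from a mark-dependent law, which avoids needing any regularity of the initial distribution in the marks.
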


\begin{proof}
To start, sample $I_n$ and $J_n$ uniformly from $V_n$ and independently of each other. For these two vertices $I_n$ and $J_n$ we will construct two independent rooted graphs $\mathcal{G}_\emptyset(\boldsymbol{\mathcal{X}})$ and $\mathcal{\hat G}_{\hat \emptyset}(\boldsymbol{\mathcal{X}})$, having the same distribution as $\mathcal{G}(\boldsymbol{\mathcal{X}})$, on which we can construct the random variables $\mathcal{R}^{(k)}, \mathcal{\hat R}^{(k)}, \mathcal{B}^{(k)}$ and $\mathcal{\hat B}^{(k)}$.

First, fix $k \geq 1$ and $\epsilon \in (0,1)$. Now let $G_{I_n}^{(k)}(\mathbf{X})$ and $G_{J_n}^{(k)}(\mathbf{X})$ denote the in-components of vertices $I_n$ and $J_n$ in $G(V_n, E_n; \mathscr{A}_n)$, respectively, and let $\mathcal{G}^{(k)}_\emptyset(\boldsymbol{\mathcal{X}})$ and $\mathcal{\hat G}_{\hat \emptyset}^{(k)}(\boldsymbol{\mathcal{X}})$ denote their strong couplings. Define the events $\mathcal{E}_{I_n}^{(k,\epsilon)}$ and $\mathcal{E}_{J_n}^{(k,\epsilon)}$ according to Definition~\ref{def:StrongCoupling}; recall that these correspond to the events that the inbound neighborhoods of depth $k$ of vertices $I_n$ and $J_n$ are isomorphic to their local weak limits, with their marks $\epsilon$ close to each other. Since we have a strong coupling for any fixed $k$, we can extend $\mathcal{G}_\emptyset^{(k)}(\boldsymbol{\mathcal{X}})$ in a consistent way so that $\mathcal{G}_\emptyset(\boldsymbol{\mathcal{X}}) := \lim_{m \to \infty} \mathcal{G}_\emptyset^{(m)}(\boldsymbol{\mathcal{X}}) \eqlaw \mathcal{G}(\boldsymbol{\mathcal{X}})$, and do the same with $\mathcal{\hat G}_{\hat \emptyset}^{(k)}(\boldsymbol{\mathcal{X}})$ in a way that the limiting graphs $\mathcal{G}_\emptyset (\boldsymbol{\mathcal{X}})$ and $\mathcal{\hat G}_{\hat \emptyset}(\boldsymbol{\mathcal{X}})$ are independent of each other. 

Next, let $V$ denote the set of vertices of $\mathcal{G}_\emptyset(\boldsymbol{\mathcal{X}})$ and let 
$$
\boldsymbol{\mathcal{X}}_i = (\boldsymbol{\mathcal{A}}_i, \mathcal{Q}_i, \mathcal{S}_i, \mathcal{N}_i, \mathcal{C}(i,1) 1(\mathcal{N}_i \geq 1), \mathcal{C}(i,2) 1(\mathcal{N}_i \geq 2), \dots )
$$
denote the mark of vertex $i \in V$. Analogously to \eqref{eq:delta-def} define the operator $\Delta$ acting on $L^\infty(V)$ as:
$$
\Delta f(i) = \sum_{r=1}^{\mathcal{N}_i} \mathcal{C}(i,r) f( \mathcal{L}(i,r)) + (1-c-d) f(i), \qquad i \in V,
$$
where $\mathcal{L}(i,r)$ is the $r$th inbound neighbor of vertex $i \in V$. Let $\hat V$ denote the vertices of $\mathcal{\hat G}_{\hat \emptyset}(\boldsymbol{\mathcal{X}})$, $\{ \boldsymbol{\mathcal{\hat X}}_i\}$ the vertex marks, and $\hat \Delta$ the corresponding operator on $L^\infty(\hat V)$. 

We now need to construct suitable sequences of external signals that are optimally coupled. Let $V_k, \hat V_k, V_{k,I_n}$ and $V_{k,J_n}$ denote the set of vertices of $\mathcal{G}^{(k)}_\emptyset, \mathcal{\hat G}_{\hat \emptyset}^{(k)}, G_{I_n}^{(k)}$, and $G_{J_n}^{(k)}$, respectively. Let $\theta: V_k \to V_{k,I_n}$ and $\hat \theta: \hat V_k \to V_{k,J_n}$ denote the bijections determining the strong couplings on the event. $\mathcal{E}_{I_n}^{(k,\epsilon)} \cap \mathcal{E}_{J_n}^{(k,\epsilon)}$. For any vertex $i \in V_k$ and any time $r \geq 0$, set the external signals at time $r$ to be:
$$
\left(\mathcal{W}_i^{(r)}, W_{\theta(i)}^{(r)} \right) = \left( H_{\boldsymbol{\mathcal{X}}_i}^{-1}(U_i^{(r)}), H_{\mathbf{X}_{\theta(i)}}^{-1}(U_i^{(r)}) \right),
$$
where $H_{\mathbf{x}}(w) = P( W_i^{(1)} \leq w \mid \mathbf{X}_i = \mathbf{x})$, $H^{-1}(u) = \inf\{ w \in \mathbb{R}: H(w) > u\}$, and the $\{U_i^{(r)}\}$ are i.i.d.~uniform $[0,1]$ random variables. This makes $(\mathcal{W}_i^{(r)}, W_{\theta (i)}^{(r)})$ an optimal coupling under any Wasserstein metric. Repeat this for all vertices $j \in \hat V_k$ and all $r \geq 0$ to obtain:
$$
\left(\mathcal{\hat W}_j^{(r)}, W_{\hat \theta(j)}^{(r)} \right) = \left( H_{\boldsymbol{\mathcal{\hat X}}_j}^{-1}(U_j^{(r)}), H_{\mathbf{X}_{\hat \theta(j)}}^{-1}(U_j^{(r)}) \right).
$$
For each vertex $i \in V \setminus V_k$, sample $\{\mathcal{W}_i^{(r)}: r \geq 0\}$ to be i.i.d.~with distribution $\nu(\boldsymbol{\mathcal{X}}_i)$, independent of everything else, and repeat for all vertices $j \in \hat V \setminus \hat V_k$. Finally, for each vertex $i \in V_n \setminus (V_{k,I_n} \cup V_{k,J_n})$ sample $\{ W_i^{(r)}: r\geq 0\}$ i.i.d.~with distribution $\nu(\mathbf{X}_i)$, independently of everything else. Now let for each $r \geq 0$, $\mathbf{W}^{(r)} = \{ W_i^{(r)}: i \in V_n\}$, $\boldsymbol{\mathcal{W}}^{(r)} = \{ \mathcal{W}_i^{(r)}: i \in V\}$ and $\boldsymbol{\mathcal{\hat W}}^{(r)} = \{ \mathcal{\hat W}_i^{(r)}: i \in V\}$; note that $\{ (\mathbf{W}^{(r)}, \boldsymbol{\mathcal{W}}^{(r)},\boldsymbol{\mathcal{\hat W}}^{(r)}  ): r \geq 0\}$ are i.i.d.~by construction. For the last step of the construction, define the vectors $\boldsymbol{\mathcal{R}}^{(0)} = \{ \mathcal{R}^{(0)}_i: i \in V\}$ and $\boldsymbol{\mathcal{\hat R}}^{(0)} = \{ \mathcal{\hat R}_i: i \in \hat V\}$ so that $\mathcal{R}^{(0)}_i = R_{\theta(i)}^{(0)}$ for $i \in V_k$ and $\mathcal{\hat R}^{(0)}_j = R_{\hat\theta(j)}^{(0)}$ for $j \in \hat V_k$, with the initial values on all other vertices chosen arbitrarily. 

We note that the construction given above holds on the event $\mathcal{E}_{I_n}^{(k,\epsilon)} \cap \mathcal{E}_{J_n}^{(k,\epsilon)}$, which occurs with high probability. On the complementary event, we can define all the relevant graphs and vectors to be independent of each other. 

Now define for each $0 \leq t \leq k$:
\begin{align*}
\boldsymbol{\mathcal{R}}^{(t)} &= \Delta^t \boldsymbol{\mathcal{R}}^{(0)} + \sum_{r=0}^{t-1} \Delta^r \boldsymbol{\mathcal{W}}^{(t-1-r)}, \quad \boldsymbol{\mathcal{\hat R}}^{(t)} = \hat\Delta^t \boldsymbol{\mathcal{\hat R}}^{(0)} + \sum_{r=0}^{t-1} \hat\Delta^r \boldsymbol{\mathcal{\hat W}}^{(t-1-r)}, \\
\boldsymbol{\mathcal{B}}^{(t)} &= \sum_{r=0}^{t} \Delta^r \boldsymbol{\mathcal{W}}^{(r)}, \hspace{11mm} \text{and} \hspace{11mm}  \boldsymbol{\mathcal{\hat B}}^{(t)} =  \sum_{r=0}^{t} \hat\Delta^r \boldsymbol{\mathcal{\hat W}}^{(r)}.
\end{align*}
The random variables in the statement of the theorem are:
$$
\mathcal{R}^{(t)} := \mathcal{R}^{(t)}_\emptyset, \quad \mathcal{\hat R}^{(t)} := \mathcal{\hat R}^{(t)}_{\hat \emptyset}, \quad \mathcal{B}^{(t)} := \mathcal{B}^{(t)}_\emptyset, \quad \text{and} \quad \mathcal{\hat B}^{(t)} := \mathcal{\hat B}^{(t)}_{\hat\emptyset},
$$
where $\boldsymbol{\mathcal{R}}^{(t)} = \{ \mathcal{R}_i^{(t)}: i \in V\}$, $\boldsymbol{\mathcal{\hat R}}^{(t)} = \{ \mathcal{\hat R}_i^{(t)}: i \in \hat V\}$, $\boldsymbol{\mathcal{B}}^{(t)} = \{ \mathcal{B}_i^{(t)}: i \in V\}$, and $\boldsymbol{\mathcal{\hat B}}^{(t)} = \{ \mathcal{\hat B}_i^{(t)}: i \in \hat V\}$ for each $0 \leq t \leq k$. 

We now prove the statement of the theorem, for which we start by noting that $g$ and $h$ are uniformly continuous on $[-1,1]$. Let $\omega_g(\delta) = \sup\{ |g(x) - g(y)|: |x-y| \leq \delta\}$ and $\omega_h(\delta) = \sup\{|h(x) - h(y)|: |x-y| \leq \delta\}$. Now note that for any $\delta \in (0,1)$:
\begin{align*}
&\sup_{0\leq t \leq k} \mathbb{E}_n\left[  \left|( g(R_{I_n}^{(t)}) - g(\mathcal{R}^{(t)}) ) (h(R_{J_n}^{(t)}) - h(\mathcal{\hat R}^{(t)}) ) \right|  \right] \\
&\leq \| h\|_\infty \| g \|_\infty \mathbb{P}_n\left( \left( \mathcal{E}_{I_n}^{(k,\epsilon)} \cap \mathcal{E}_{J_n}^{(k,\epsilon)} \right)^c \right) + \| h\|_\infty \| g \|_\infty \max_{0\leq t \leq k} \mathbb{P}_n\left( \mathcal{E}_{I_n}^{(k,\epsilon)} , \, \left| R_{I_n}^{(t)} - \mathcal{R}^{(t)} \right| > \delta  \right) \\
&\hspace{5mm} + \| h\|_\infty \| g \|_\infty \max_{0\leq t \leq k} \mathbb{P}_n\left( \mathcal{E}_{J_n}^{(k,\epsilon)} , \, \left| R_{J_n}^{(t)} - \mathcal{\hat R}^{(t)} \right| > \delta  \right) + \omega_g(\delta) \omega_f(\delta) .
\end{align*}
Note that the first conditional probability converges to zero in probability, as $n\to\infty$, by Definition~\ref{def:StrongCoupling}, while the second and third conditional probabilities are equal to each other since $I_n \eqlaw J_n$. 

To analyze the remaining probability, let $\mathcal{D}_r$ denote the set of vertices in $\mathcal{G}_\emptyset^{(k)}$ that have a directed path of length $r$ to $\emptyset$. Let $\mathscr{H}_k = \sigma\left( G_{I_n}^{(k)}(\mathbf{X}), \mathcal{G}_\emptyset^{(k)}(\boldsymbol{\mathcal{X}}) \right)$. Now define on the event $\mathcal{E}_{I_n}^{(k,\epsilon)}$, and for vertices $i \in \mathcal{G}_\emptyset$ and any $0\leq r \leq t \leq k$:
$$
J_i^{(t,r)} = E\left[ \left. \left| R_{\theta(i)}^{(r)} - \mathcal{R}_i^{(r)} \right| \right| \mathscr{H}_k\right] 1( i \in \mathcal{D}_{t-r} ),
$$
and let $\mathbf{J}^{(t,r)}  = \{ J_i^{(t,r)}: i \in V\}$. Now note that
\begin{align*}
J_\emptyset^{(t,t)} &\leq E\left[  \left| \sum_{j=1}^{\mathcal{N}_\emptyset} \mathcal{C}(\emptyset, j) \left( \mathcal{R}_{\mathcal{L}(\emptyset,j)}^{(t-1)} - R_{L(I_n,j)}^{(t-1)}  \right)  + (1-c-d) \left( R_{I_n}^{(t-1)}  -  \mathcal{R}_\emptyset^{(t-1)} \right)  \right| \right. \\
&\hspace{5mm} + \left. \left. \left|   \sum_{j=1}^{\mathcal{N}_\emptyset} \left( C(I_n,j) -  \mathcal{C}(\emptyset,j) \right)  R_{L(I_n,j)}^{(t-1)}  \right| + \left|  W_{I_n}^{(t-1)} - \mathcal{W}_\emptyset^{(t-1)} \right| \right| \mathscr{H}_k \right]  \\
&\leq \left( \Delta \mathbf{J}^{(t,t-1)} \right)_\emptyset + \rho(\mathbf{X}_{I_n}, \boldsymbol{\mathcal{X}}_\emptyset) E\left[ \left.  \max_{1 \leq j \leq \mathcal{N}_\emptyset} \left| R_{L(I_n, j)}^{(t-1)} \right| \right| \mathscr{H}_k \right] +K  \rho(\mathbf{X}_{I_n},\boldsymbol{\mathcal{X}}_\emptyset)  \\
&\leq  \left( \Delta \mathbf{J}^{(t,t-1)} \right)_\emptyset + (2+K) \epsilon   .
\end{align*}
It follows that $\| \mathbf{J}^{(t,t)} \|_\infty \leq \| \Delta \mathbf{J}^{(t,t-1)} \|_\infty + (2+K)\epsilon \leq || \Delta \|_\infty \| \mathbf{J}^{(t,t-1)} \|_\infty + (2+K)\epsilon$. Note that the same steps give for any $i \in \mathcal{D}_1$ that
$$
J_i^{(t,t-1)} \leq \left( \Delta \mathbf{J}^{(t,t-2)} \right)_i + (2+K)\epsilon, \qquad \text{and} \qquad \| \mathbf{J}^{(t,t-1)} \|_\infty \leq \| \Delta\|_\infty  \| \mathbf{J}^{(t,t-2)} \|_\infty + (2+K)\epsilon.
$$
Iterating for $t-2$ more steps and using that $\| \Delta\|_\infty \leq 1-d$, gives
$$
\| \mathbf{J}^{(t,t)} \|_\infty \leq \sum_{r=0}^{t-1} \| \Delta \|_\infty^r (2+K)\epsilon + \| \Delta \|_\infty^t \| \mathbf{J}^{(t,0)} \|_\infty \leq (2+K)\epsilon \sum_{r=0}^{t-1} (1-d)^r.
$$

We now obtain by conditioning on $\mathscr{H}_k$ that
\begin{align*}
\max_{0\leq t \leq k} \mathbb{P}_n\left( \mathcal{E}_{I_n}^{(k,\epsilon)} , \, \left| R_{I_n}^{(t)} - \mathcal{R}^{(t)} \right| > \delta  \right)  &\leq \max_{0\leq t \leq k} \mathbb{E}_n\left[ 1\left( \mathcal{E}_{I_n}^{(k,\epsilon)}   \right)  \delta^{-1} \| \mathbf{J}^{(t,t)} \|_\infty \right] \\
&\leq \delta^{-1} (2+K)\epsilon \sum_{r=0}^{k-1} (1-d)^r.
\end{align*}
We conclude that
\begin{align*}
&\sup_{0\leq t \leq k} \mathbb{E}_n\left[  \left|( g(R_{I_n}^{(t)}) - g(\mathcal{R}^{(t)}) ) (h(R_{J_n}^{(t)}) - h(\mathcal{\hat R}^{(t)}) ) \right|  \right] \\
&\leq \| h\|_\infty \| g \|_\infty \mathbb{P}_n\left( \left( \mathcal{E}_{I_n}^{(k,\epsilon)} \cap \mathcal{E}_{J_n}^{(k,\epsilon)} \right)^c \right) + 2 \| h\|_\infty \| g \|_\infty  \delta^{-1} (2+K)\epsilon \sum_{r=0}^{k-1} (1-d)^r + \omega_g(\delta) \omega_f(\delta) \\
&\xrightarrow{P} 2 \| h\|_\infty \| g \|_\infty  \delta^{-1} (2+K)\epsilon \sum_{r=0}^{k-1} (1-d)^r + \omega_g(\delta) \omega_f(\delta)
\end{align*}
as $n \to \infty$. Taking $\epsilon \downarrow 0$ followed by $\delta \downarrow 0$ gives the first statement of the theorem. The one involving the time reversed processes follows exactly the same steps, so the proof is complete. 
\end{proof}

We are now ready to prove Theorem~\ref{thm:treelim}. 

\begin{proof}[Proof of Theorem~\ref{thm:treelim}.]
We start with the statements in (1), involving the original recursions. The statement about the expectations was proven within the proof of Theorem~\ref{thm:LocalWeakConv} (it can also be directly derived from Theorem~\ref{thm:LocalWeakConv} by choosing $g(x) = h(x) = x$ and using the monotonicity of the $L_p$ norm). For the second statement let $\mu_k = E[f(\mathcal{R}^{(k)})]$ and note that by Theorem~\ref{thm:LocalWeakConv}, there exist i.i.d.~random variables $\mathcal{R}^{(k)}, \mathcal{\hat R}^{(k)}$ such that
\begin{align*}
E\left[ \left( \frac{1}{n} \sum_{i=1}^n f(R_i^{(k)}) - \mu_k \right)^2 \right] &= \frac{1}{n^2} \sum_{i=1}^n \sum_{j =1}^n E\left[ \left( f(R_i^{(k)}) - \mu_k \right) \left( f(R_j^{(k)}) - \mu_k \right) \right] \\
&= E\left[  \left( f(R_{I_n}^{(k)}) - \mu_k \right) \left( f(R_{J_n}^{(k)}) - \mu_k \right) \right] \\
&= E\left[  \left( f(R_{I_n}^{(k)}) - f(\mathcal{R}^{(k)})  \right) \left( f(R_{J_n}^{(k)}) - f(\mathcal{\hat R}^{(k)})  \right) \right]  \\
&\hspace{5mm} + E\left[  \left( f(\mathcal{R}^{(k)}) -  \mu_k \right) \left( f(R_{J_n}^{(k)}) - f(\mathcal{\hat R}^{(k)})  \right) \right] \\
&\hspace{5mm} +  E\left[  \left( f(R_{I_n}^{(k)}) - f(\mathcal{R}^{(k)})  \right) \left( f(\mathcal{\hat R}^{(k)}) - \mu_k \right) \right] \\
&\hspace{5mm} + E\left[  \left( f(\mathcal{R}^{(k)}) -  \mu_k \right) \left(  f(\mathcal{\hat R}^{(k)}) - \mu_k \right) \right] \\
&\leq E\left[ \mathbb{E}_n\left[ \left| \left( f(R_{I_n}^{(k)}) - f(\mathcal{R}^{(k)})  \right) \left( f(R_{J_n}^{(k)}) - f(\mathcal{\hat R}^{(k)})  \right) \right|  \right] \right]   \\
&\hspace{5mm} + 2 \| f\|_\infty E\left[ \mathbb{E}_n\left[ \left| f(R_{I_n}^{(k)}) - f(\mathcal{R}^{(k)})  \right| \right] \right] .
\end{align*}
Now use Theorem~\ref{thm:LocalWeakConv} and the bounded convergence theorem to obtain that
$$
\lim_{n \to \infty} E\left[ \left( \frac{1}{n} \sum_{i=1}^n f(R_i^{(k)}) - \mu_k \right)^2 \right] = 0,
$$
from where the convergence in probability follows. 

For the statements in part (2) we use the time-reversed processes and take $\mathbf{R} = \sum_{r=0}^\infty \bar \Delta^r \mathbf{W}^{(r)}$ and $\mathcal{R}^* = \lim_{k\to \infty} \mathcal{B}^{(k)}$. For the expectation result, note that for any $k \geq 1$, 
\begin{align*}
\mathbb{E}_n\left[ \left| R_{I_n} - \mathcal{R}^* \right| \right] &\leq \mathbb{E}_n\left[ \left| R_{I_n} - B_{I_n}^{(k)}  \right| \right] + \mathbb{E}_n\left[ \left| B_{I_n}^{(k)}  - \mathcal{B}^{(k)} \right| \right] + E\left[ \left| \mathcal{B}^{(k)} - \mathcal{R}^* \right| \right] .\end{align*}
Now note that
\begin{align*}
\mathbb{E}_n\left[ \left|R_{I_n} - B_{I_n}^{(k)}  \right| \right]  &\leq \mathbb{E}_n\left[ \| \mathbf{R} - \mathbf{B}^{(k)} \|_\infty \right] \leq \sum_{r=k+1}^\infty \mathbb{E}_n\left[ \| \bar \Delta^r \mathbf{W}^{(r)} \|_\infty \right] \\
&\leq \sum_{r=k+1}^\infty \mathbb{E}_n\left[ \| \bar \Delta \|_\infty^r \| \mathbf{W}^{(r)} \|_\infty \right]  \leq 2 \sum_{r=k+1}^\infty (1-d)^r.
\end{align*}
Therefore,
$$
\lim_{n \to \infty} \mathbb{E}_n\left[ \left| R_{I_n} - \mathcal{R}^* \right| \right] \leq 2 \sum_{r=k+1}^\infty (1-d)^r + E\left[ \left| \mathcal{B}^{(k)} - \mathcal{R}^* \right| \right] ,
$$
and the result follows from taking $k \to \infty$. To obtain the convergence in probability result, let $\mu = E[f(\mathcal{R}^*)]$ and $\lambda_k = E[f(\mathcal{B}^{(k)})]$, and note that for any $k \geq 1$, 
\begin{align*}
\left| \frac{1}{n} \sum_{i=1}^n f(R_i) -\mu \right| &\leq \left| \frac{1}{n} \sum_{i=1}^n (f(R_i) - f(B_i^{(k)}) )\right| + \left| \frac{1}{n} \sum_{i=1}^n f(B_i^{(k)}) - \lambda_k \right| + |\lambda_k - \mu|.
\end{align*}
The second term on the right converges to zero in probability as $n \to \infty$ by the same arguments used in part (1). And the first term satisfies for any $\delta > 0$ and $\omega_f(\delta) = \sup\{ |f(x) - f(y)|: |x-y|\leq \delta\}$,
\begin{align*}
\left| \frac{1}{n} \sum_{i=1}^n (f(R_i) - f(B_i^{(k)}) )\right| &\leq \frac{1}{n} \sum_{i=1}^n | f(R_i) - f(B_i^{(k)}) | 1( |R_i-B_i^{(k)}| > \delta) +  \omega_f(\delta) \\
&\leq \frac{ \|f \|_\infty}{\delta n} \sum_{i=1}^n |R_i - B_i^{(k)} | + \omega_f(\delta) \leq \frac{ \| f \|_\infty \| \mathbf{R} - \mathbf{B}^{(k)} \|_\infty}{\delta} +  \omega_f(\delta) \\
&\leq \frac{2\| f\|_\infty}{\delta} \sum_{r=k+1}^\infty (1-d)^r +  \omega_f(\delta). 
\end{align*}
We conclude that
$$
\lim_{n \to \infty} \left| \frac{1}{n} \sum_{i=1}^n (f(R_i) - f(B_i^{(k)}) )\right| \leq  \frac{2\| f\|_\infty}{\delta} \sum_{r=k+1}^\infty (1-d)^r +  \omega_f(\delta) + |\lambda_k - \mu |,
$$
and taking $k \to \infty$ followed by $\delta \downarrow 0$ proves the convergence in probability. That $\mathcal{R}^{(k)} \Rightarrow \mathcal{R}^*$ as $k \to \infty$ follows from their explicit constructions given in the proof of Theorem~\ref{thm:treelim}. This completes the proof. 
\end{proof} 

\subsection{Explicit Characterizations}\label{sec:explicit_proofs}

In this section we present the proofs of Propositions \ref{prop:explicit}, \ref{prop:consensus}, \ref{prop:polarization} and \ref{prop:memory}. We begin by providing an explicit characterization of $\mathcal{R}^*$ when the local limit $\mathcal{G}(\boldsymbol{\mathcal{X}})$ is a tree.

\begin{proof}[Proof of Proposition \ref{prop:explicit}.]
Let $\boldsymbol{\mathcal{R}}$ be the vector of stationary opinions in the limit tree $\mathcal{T}(\boldsymbol{\mathcal{X}})$ (not necessarily a Galton-Watson tree). From the proof of Theorem \ref{thm:graphlim} we know that $\boldsymbol{\mathcal{R}} \eqlaw \sum_{s=0}^\infty \Delta^s \boldsymbol{\mathcal{W}}^{(s)}$ where $\Delta$ is defined in \eqref{eq:delta-def}. Therefore, it is enough to show that $\Delta^s$ acts on the tree according to 
$$
\Delta^s f(\mathbf{i}) = \sum_{l=0}^s \sum_{|\mathbf{j}|=l} \frac{\Pi_{(\mathbf{i}, \mathbf{j})} }{\Pi_{\mathbf{i}}} a_{l,s} f((\mathbf{i}, \mathbf{j})),
$$
for all $\mathbf{i}\in \mathcal{T}(\boldsymbol{\mathcal{X}})$. We prove this by induction. 

Without loss of generality, let $\mathbf{i} \in \mathcal{T}(\boldsymbol{\mathcal{X}})$ with $|\mathbf{i}| = r$, for some fix $r \geq 0$. When $s = 0$,
\begin{align*}
    \Delta^0 f(\mathbf{i})  = \sum_{|\mathbf{j}|=0} \frac{\Pi_{(\mathbf{i}, \mathbf{j})} }{\Pi_{\mathbf{i}}} a_{0,0} f((\mathbf{i}, \mathbf{j})) = a_{0,0}f(\mathbf{i}) = f(\mathbf{i}).
\end{align*}
Next, suppose that the expression holds for $\Delta^{k}$. Then, applying $\Delta$ once we obtain that
\begin{align*}
\Delta^{(k+1)}f(\mathbf{i}) & = \sum_{m = 1}^{\mathcal{N}_\mathbf{i}} \mathcal{C}_{(\mathbf{i}, m)} \Delta^k f((\mathbf{i}, m)) + (1-c-d) \Delta^k f(\mathbf{i}) \\
& = \sum_{m = 1}^{\mathcal{N}_\mathbf{i}} \mathcal{C}_{(\mathbf{i}, m)} \sum_{l = 0}^{k}\sum_{|\mathbf{j}|=l} \frac{\Pi_{(\mathbf{i}, m, \mathbf{j})} }{\Pi_{(\mathbf{i}, m)}}a_{l,k} f((\mathbf{i}, m, \mathbf{j}))
+ (1-c-d) \sum_{l = 0}^{k}\sum_{ |\mathbf{j}|=l} \frac{\Pi_{(\mathbf{i}, \mathbf{j})} }{\Pi_{\mathbf{i}}}a_{l,k} f((\mathbf{i}, \mathbf{j})).
\end{align*}
Recall that $\Pi_{(\mathbf{i}, m)} = \Pi_{\mathbf{i}}\mathcal{C}_{(\mathbf{i},m)}$, let $l' = l+1$, and rearrange the terms in the first sum to obtain
\begin{align*}
\Delta^{(k+1)}f(\mathbf{i}) 
& =  \sum_{l' = 1}^{k+1}\sum_{{ |\mathbf{j}| = l'}} \frac{\Pi_{(\mathbf{i}, \mathbf{j})} }{\Pi_{\mathbf{i}}}a_{l'-1,k} f((\mathbf{i}, \mathbf{j}))
+ (1-c-d) \sum_{l = 0}^{k}\sum_{{ |\mathbf{j}| = l}} \frac{\Pi_{(\mathbf{i}, \mathbf{j})} }{\Pi_{\mathbf{i}}}a_{l,k} f((\mathbf{i}, \mathbf{j})).
\end{align*}
Combining the common terms yields
\begin{align*}
\Delta^{(k+1)}f(\mathbf{i}) 
& =  \sum_{|\mathbf{j}| = k+1} \frac{\Pi_{(\mathbf{i}, \mathbf{j})} }{\Pi_{\mathbf{i}}}a_{k,k} f((\mathbf{i}, \mathbf{j})) + \sum_{l = 1}^{k} \sum_{ |\mathbf{j}|=l} \frac{\Pi_{(\mathbf{i}, \mathbf{j})} }{\Pi_{\mathbf{i}}}(a_{l-1,k}+(1-c-d)a_{l,k})  f((\mathbf{i}, \mathbf{j})) \\
& \hspace{5mm} +  (1-c-d)a_{0,k} f(\mathbf{i}).
\end{align*}
Now note that 
$$
a_{l-1, k} +(1-c-d)a_{l,k} = \left(\binom{k}{l-1} + \binom{k}{l}\right) (1-c-d)^{k+1-l} = a_{l,k+1},
$$
$$
a_{k,k} = a_{k+1, k+1} = 1, \qquad \text{ and } \qquad  (1-c-d)a_{0,k} = a_{0,k+1},
$$ 
from where it follows that
\begin{align*}
    \Delta^{(k+1)}f(\mathbf{i}) 
    & =  \sum_{l = 0}^{k+1} \sum_{ |\mathbf{j}| = l} \frac{\Pi_{(\mathbf{i}, \mathbf{j})} }{\Pi_{\mathbf{i}}}a_{l,k+1}  f((\mathbf{i}, \mathbf{j})).
\end{align*}
This completes the proof. 
\end{proof}

We now present the proofs of Proposition \ref{prop:consensus} and Proposition \ref{prop:polarization} on the expected value and variance of the typical stationary opinion $\mathcal{R}^*$. 

\begin{proof}[Proof of Propositions \ref{prop:consensus} and \ref{prop:polarization}.]
Recall that, as shown in Remark \ref{rem:SFPE}, under the assumption that the sequence of random graphs converges to a delayed marked Galton-Watson tree, the limiting opinion $\mathcal{R}^*$ for the recursion with no memory (c+d = 1) and $P(\mathcal{N}_\emptyset > 0) = 1$ can be written as:
$$
\mathcal{R}^*  = \sum_{i = 1}^{\mathcal{N}_\emptyset}\mathcal{C}_i\mathcal{R}_i + d\mathcal{Z}_\emptyset,
$$
where the $\{\mathcal{R}_i\}$ are independent of $\boldsymbol{\mathcal{X}}_\emptyset$ and i.i.d. copies of the solution to the stochastic fixed point equation:
$$
\mathcal{R} \eqlaw \sum_{i = 1}^{\mathcal{N}_1}\mathcal{C}_{(1,i)}\mathcal{R}_i + d\mathcal{Z}_1,
$$
with the $\{\mathcal{R}_i\}$ i.i.d. copies of $\mathcal{R}$, independent of $\boldsymbol{\mathcal{X}}_1$. The above recursions can be used directly to compute the mean of $\mathcal{R}^*$ and its higher moments. Recall that, due to the size-bias encountered in the exploration of the graph, the limiting tree is a delayed Galton-Watson tree, meaning its root $\emptyset$ may have a vertex mark $\boldsymbol{\mathcal{X}}_\emptyset$ that has a different distribution from all other vertex marks $\{ \boldsymbol{\mathcal{X}}_\mathbf{i}: \mathbf{i} \neq \emptyset\}$. This, in turn, makes the stationary opinion of the root, $\mathcal{R}^*$, also different from that of all other nodes, $\mathcal{R}$.  The conditional expectation and conditional variance of $\mathcal{R}^*$ given $\boldsymbol{\mathcal{X}}_\emptyset$ take the form:
\begin{align}
E[\mathcal{R}^*|\boldsymbol{\mathcal{X}}_\emptyset] & = E\left[\left.\sum_{i = 1}^{\mathcal{N}_\emptyset}\mathcal{C}_i\mathcal{R}_i\right|\boldsymbol{\mathcal{X}}_\emptyset\right] + d E[\mathcal{Z}_\emptyset|\boldsymbol{\mathcal{X}}_\emptyset]   = cE[\mathcal{R}] + d E[\mathcal{Z}_\emptyset|\boldsymbol{\mathcal{X}}_\emptyset], \label{eq:Ezero}\\
\var(\mathcal{R}^*|\boldsymbol{\mathcal{X}}_\emptyset) &= \var\left(\left.\sum_{i = 1}^{\mathcal{N}_\emptyset}\mathcal{C}_i\mathcal{R}_i + d\mathcal{Z}_\emptyset\right|\boldsymbol{\mathcal{X}}_\emptyset\right) = \sum_{i = 1}^{\mathcal{N}_\emptyset}\mathcal{C}_i^2\var(\mathcal{R}) + d^2\var(\mathcal{Z}_\emptyset |\boldsymbol{\mathcal{X}}_\emptyset). \label{eq:Vzero}
\end{align}
Similarly, the conditional expectation and variance of $\mathcal{R}$ are given by
\begin{align*}
E[\mathcal{R}|\boldsymbol{\mathcal{X}}_1] = cE[\mathcal{R}] + d E[\mathcal{Z}_1|\boldsymbol{\mathcal{X}}_1]
\quad\text{and}\quad
\var(\mathcal{R}|\boldsymbol{\mathcal{X}}_1) = \sum_{i = 1}^{\mathcal{N}_1}\mathcal{C}{(1,i)}^2\var(\mathcal{R}) + d^2\var(\mathcal{Z}_1 |\boldsymbol{\mathcal{X}}_1).
\end{align*}
Taking expected values over $\boldsymbol{\mathcal{X}}_1$ we get that $E[\mathcal{R}] = cE[\mathcal{R}] + d E[\mathcal{Z}_1]$ and
\begin{align*}
\var(\mathcal{R}) = 
E[\var(\mathcal{R}|\boldsymbol{\mathcal{X}}_1)] + \var(E[\mathcal{R}|\boldsymbol{\mathcal{X}}_1])
= \rho_2\var(\mathcal{R}) + d^2 \var(\mathcal{Z}_1),
\end{align*}
where $\rho_2 = E\left[\sum_{i = 1}^{\mathcal{N}_1}\mathcal{C}_{(1,i)}^2\right]$. Using $c+d = 1$, we conclude that
\begin{align}\label{eq:EV1}
E\left[\mathcal{R}\right]  = E[\mathcal{Z}_1],
\quad\text{and}\quad
\var(\mathcal{R})  = \frac{d^2}{1-\rho_2}\var(\mathcal{Z}_1).
\end{align}
Combining \eqref{eq:EV1} with \eqref{eq:Ezero} and \eqref{eq:Vzero}, we obtain
\begin{align*}
E[\mathcal{R}^*|\boldsymbol{\mathcal{X}}_\emptyset] & =  cE\left[\mathcal{Z}_1\right] + dE[\mathcal{Z}_\emptyset|\boldsymbol{\mathcal{X}}_\emptyset], \qquad \text{and} \\    \var(\mathcal{R}^*|\boldsymbol{\mathcal{X}}_\emptyset) & = \frac{d^2}{1-\rho_2}\sum_{i = 1}^{\mathcal{N}_\emptyset}\mathcal{C}_i^2\var(\mathcal{Z}_1) + d^2\var\left( \mathcal{Z}_\emptyset\middle|\boldsymbol{\mathcal{X}}_\emptyset\right),
\end{align*}
as claimed in Proposition \ref{prop:polarization}.
Moreover, averaging over $\boldsymbol{\mathcal{X}}_\emptyset$, we compute the expectation of $\mathcal{R}^*$ to be
$$
E[\mathcal{R}^*] =  cE\left[\mathcal{Z}_1\right] + dE[\mathcal{Z}_\emptyset].
$$
And using the law of total variance for $\mathcal{R}^*$ we obtain that
\begin{align*}
\var(\mathcal{R}^*) & = E[\var(\mathcal{R}^*|\boldsymbol{\mathcal{X}}_\emptyset)] + \var(E[\mathcal{R}^*|\boldsymbol{\mathcal{X}}_\emptyset]) \\
& = \frac{d^2\rho_2^*}{1-\rho_2}\var(\mathcal{Z}_1) + d^2E[\var(\mathcal{Z}_\emptyset|\boldsymbol{\mathcal{X}}_\emptyset)] + 
d^2\var(E[\mathcal{Z}_\emptyset|\boldsymbol{\mathcal{X}}_\emptyset]) \\
& = \frac{d^2\rho_2^*}{1-\rho_2}\var(\mathcal{Z}_1) + d^2\var(\mathcal{Z}_\emptyset),
\end{align*}
where $\rho_2^* = E\left[\sum_{i = 1}^{\mathcal{N}_\emptyset}\mathcal{C}_i^2\right]$ and $\rho_2 = E\left[\sum_{i = 1}^{\mathcal{N}_1}\mathcal{C}_{(1,i)}^2\right]$. The expressions for the expected value and variance of $\mathcal{R}^*$ yield the result in Proposition \ref{prop:consensus}. 
\end{proof}

We finish the section proving Proposition \ref{prop:memory} which compares the variance of the limiting distribution for the general case $c+d < 1$, with that of the no memory case $c+d = 1$. The proportional influence of the media signals and of the neighbors' opinions is preserved by renormalizing according to $c+d$ in the general recursion.

\begin{proof}[Proof of Proposition \ref{prop:memory}.]
When the assumptions of Theorem~\ref{thm:treelim} hold, and the local limit $\mathcal{T}(\boldsymbol{\mathcal{X}})$ is a (delayed) marked Galton-Watson tree, one can use the explicit characterization of the limiting opinion given in Proposition \ref{prop:explicit} to directly compute the variances of $\mathcal{M}^*$ and $\mathcal{R}^*$. The variance of $\mathcal{R}$ was shown in Proposition~\ref{prop:consensus} to be:
\begin{align}\label{eq:A}
    \var(\mathcal{R}^*)  & = \frac{d^2}{(c+d)^2}\var(\mathcal{Z}_\emptyset)   +  \frac{ \rho_2^*d^2}{(c+d)^2((c+d)^2-\rho_2)} \var(\mathcal{Z}_1).
\end{align}
The detailed derivation of the variance of $\mathcal{M}^*$ (including the case $P(\mathcal{N}_\emptyset = 0) + P(\mathcal{N}_1 = 0) > 0$) can be found in the appendix (see Remark~5), and yields
\begin{align}\label{eq:B}
    \var(\mathcal{M}^*)  &= \frac{d^2}{(c+d)^2}\var(\mathcal{Z}_\emptyset) + \frac{d^2\rho_2^*}{(c+d)^2((c+d)^2-\rho_2)}\var(\mathcal{Z}_1) \notag \\
    & \hspace{5mm} +  \frac{2d^2(c+d-1)}{(c+d)^2(2-(c+d))}E[\var(\mathcal{Z}_\emptyset|\boldsymbol{\mathcal{X}}_\emptyset)] \notag\\
    &\hspace{5mm} +d^2\rho_2^*\left(\frac{1}{1-\rho_2}E\left[ (c+d)^{-2(T+2)}p_{T+1} \right] - \frac{1}{(c+d)^2((c+d)^2-\rho_2)}\right)E[\var(\mathcal{Z}_1|\boldsymbol{\mathcal{X}}_1)] \notag.
\end{align}
where $p_{T+1} = \sum_{s=0}^\infty \binom{s+T+1}{s}^2 (1-c-d)^{2s} (c+d)^{2(T+2)}$, and $T$ is a geometric random variable (on $\{0,1,2,\dots\}$) with success probability $1-\rho_2$. Since $E[\var(\mathcal{Z}_\emptyset|\boldsymbol{\mathcal{X}}_\emptyset)]$ and $E[\var(\mathcal{Z}_1|\boldsymbol{\mathcal{X}}_1)]$ are always positive, it only remains to show that their corresponding coefficients are negative. The coefficient of $E[\var(\mathcal{Z}_\emptyset|\boldsymbol{\mathcal{X}}_\emptyset)]$ is clearly negative since $c+d < 1$. To see that the coefficient of $E[\var(\mathcal{Z}_1|\boldsymbol{\mathcal{X}}_1)]$ is also negative, note that since $p_{T+1}\in(0,1)$ we obtain that
\begin{align*}
    E[(c+d)^{-2(T+2)}p_{T+1}] & \leq  E[(c+d)^{-2(T+2)}] \\
    & = \frac{1}{(c+d)^4} \sum_{k = 0}^\infty \rho_2^{k}(1-\rho_2) (c+d)^{-2k}
    = \frac{1-\rho_2}{(c+d)^2((c+d)^2-\rho_2)}.
\end{align*}
We have thus shown that
\begin{equation*}
    \frac{1}{1-\rho_2}E\left[ (c+d)^{-2(T+2)}p_{T+1}\right] - \frac{1}{(c+d)^2((c+d)^2-\rho_2)}\leq 0. 
\end{equation*}
It follows now by comparing to the expression in \eqref{eq:A} that
$$\var(\mathcal{M}^*) \leq \frac{d^2}{(c+d)^2}\var(\mathcal{Z}_\emptyset) + \frac{d^2\rho_2^*}{(c+d)^2((c+d)^2-\rho_2)}\var(\mathcal{Z}_1) = \var(\mathcal{R}^*).$$
\end{proof}
\section*{Acknowledgments}
Tzu-Chi Lin has been partially supported by NSF DMS-1929298, awarded to the Statistical and Applied Mathematical Sciences Institute, and NSF CMMI-2243261.

\section*{Appendix}

In the appendix, we relax the assumptions in Proposition~\ref{prop:consensus} and compute the mean and variance of $\mathcal{R}^*$ for the general recursion~\eqref{eq:OpinionRec} (which may have $c+d < 1$ and $P(\mathcal{N}_\emptyset = 0) + P(\mathcal{N}_1 = 0) > 0$) on any directed random graph whose local weak limit $\mathcal{T}(\boldsymbol{\mathcal{X}})$ is a (delayed) marked Galton-Watson tree. The node marks $\{\boldsymbol{\mathcal{X}}_\mathbf{i}: \mathbf{i} \in \mathcal{U}\}$ in a (delayed) marked Galton-Watson tree are independent of each other, with $\{\boldsymbol{\mathcal{X}}_\mathbf{i}: \mathbf{i} \in \mathcal{U}, \mathbf{i} \neq \emptyset\}$ i.i.d. This structure allows us to directly compute the mean and variance of $\mathcal{R}^*$ using \eqref{eq:sol_memory} in Proposition~\ref{prop:explicit}:
$$
\mathcal{R}^*: =  \sum_{s=0}^{\infty} \sum_{l=0}^{s}  \sum_{|\mathbf{j}| = l} \Pi_{\mathbf{j}} a_{l,s} \mathcal{W}_{\mathbf{j}}^{(s)},
$$
where $a_{l,s} = \binom{s}{l}(1-c-d)^{s-l}$, and $\Pi_{\mathbf{j}}$ is recursively defined by $\Pi_{(\mathbf{i}, j)} = \Pi_{\mathbf{i}}\mathcal{C}_{(\mathbf{i},j)}$ with $\Pi_\emptyset = 1$.

\begin{theorem}\label{thm:LimitR_MeanVariance}
Suppose the assumptions of Theorem~\ref{thm:treelim} hold, with the local weak limit being a (delayed) marked Galton-Watson tree $\mathcal{T}(\boldsymbol{\mathcal{X}})$. If we let $\mathcal{R}^*$ denote the limiting opinion of the root node in $\mathcal{T}(\boldsymbol{\mathcal{X}})$ and $\mathcal{R}$ denote the limiting opinion of its neighbors, then the mean and variance are given by
\begin{align*}
E\left[ \mathcal{R}^*\right] &= \frac{1}{c+d}E[\mathcal{W}_\emptyset]  + \frac{\rho_1^*}{(c+d-\rho_1) (c+d)} E[\mathcal{W}_1], \\
E\left[ \mathcal{R} \right] &= \frac{E\left[\mathcal{W}_1\right] }{c+d-\rho_1}, 
\end{align*}
where $\rho_1^* = E\left[\sum_{i = 1}^{\mathcal{N}_\emptyset}\mathcal{C}_{i}\right]$ and $\rho_1 = E\left[\sum_{i = 1}^{\mathcal{N}_1}\mathcal{C}_{(1,i)}\right]$. Moreover, if we let 
$$
\rho_2^* = E\left[\sum_{i = 1}^{\mathcal{N}_\emptyset}\mathcal{C}_{i}^2\right],
\qquad
\rho_2 = E\left[\sum_{i = 1}^{\mathcal{N}_1}\mathcal{C}_{(1,i)}^2\right],
\qquad
\mathcal{Y}_\mathbf{i} = E[\mathcal{W}_\mathbf{i}^{(s)}|\boldsymbol{\mathcal{X}}_\mathbf{i}],
\qquad
\mathcal{V}_\mathbf{i} = \var(\mathcal{W}_\mathbf{i}^{(s)}|\boldsymbol{\mathcal{X}}_\mathbf{i}),
$$
then
\begin{align*}
\var\left(\mathcal{R}^*\right)  &=\var\left(\sum_{i = 1}^{\mathcal{N}_\emptyset} \mathcal{C}_{i}\right)  E[\mathcal{W}_1]^2 \frac{1}{(c+d)^2(c+d-\rho_1)^2} \\
& \hspace{5mm} +  \var(\mathcal{Y}_\emptyset)  \frac{1}{(c+d)^2}\\
& \hspace{5mm}+ 2\cov\left(\sum_{i = 1}^{\mathcal{N}_\emptyset} \mathcal{C}_{i}, \mathcal{Y}_\emptyset\right)E[\mathcal{W}_1] \frac{1}{(c+d)^2(c+d-\rho_1)} \\
& \hspace{5mm} + E[\mathcal{V}_\emptyset] \left( \frac{1}{1-(1-c-d)^2} \right)\\
& \hspace{5mm} + \rho_2^* \var\left(\sum_{i = 1}^{\mathcal{N}_1} \mathcal{C}_{(1,i)}\right)E[\mathcal{W}_1]^2\frac{1}{(c+d)^2(c+d- \rho_1)^2((c+d)^2 - \rho_2)}\\
&\hspace{5mm} + \rho_2^*E[\mathcal{V}_1] \frac{1}{1-\rho_2}E\left[ (c+d)^{-2(T+2)}p_{T+1}\right] \\
& \hspace{5mm}+ \rho_2^*\var(\mathcal{Y}_1) \frac{1}{(c+d)^2((c+d)^2-\rho_2)}\\
&\hspace{5mm} + 2 \rho_2^* \cov\left(\sum_{i = 1}^{\mathcal{N}_1} \mathcal{C}_{(1,i)}, \mathcal{Y}_1\right)   E[\mathcal{W}_1] \frac{1}{(c+d)^2(c+d-\rho_1)((c+d)^2-\rho_2)},  \\
\var\left(\mathcal{R}\right) &= \var\left(\sum_{i = 1}^{\mathcal{N}_1} \mathcal{C}_{(1,i)}\right)  E[\mathcal{W}_1]^2 \frac{1}{\left((c+d)- \rho_1\right)^2((c+d)^2 - \rho_2)} \\
&\hspace{5mm} +E[\mathcal{V}_1] \frac{1}{1-\rho_2}E\left[ (c+d)^{-2(T+1)}p_T\right]  + \var(\mathcal{Y}_1)  \frac{1}{(c+d)^2-\rho_2} \\
&\hspace{5mm} + 2 \cov\left(\sum_{i = 1}^{\mathcal{N}_1} \mathcal{C}_{(1,i)}, \mathcal{Y}_1\right)  E[\mathcal{W}_1] \frac{1}{(c+d-\rho_1)((c+d)^2-\rho_2)},
\end{align*}
where $p_T =  \sum_{s = 0}^{\infty} \binom{s+T}{s}^2(1-c-d)^{2s}(c+d)^{2(T+1)} \in (0,1)$ with $T$ being a Geometric random variable (on $\{0, 1, 2,\dots\}$) with success probability $1-\rho_2$.
\end{theorem}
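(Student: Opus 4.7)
The plan is to combine the explicit series representation of $\mathcal{R}^*$ from Proposition~\ref{prop:explicit} with the law of total variance, exploiting the independence of node marks in the (delayed) marked Galton-Watson tree $\mathcal{T}(\boldsymbol{\mathcal{X}})$.

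For the means, I observe that in $\mathcal{R}^* = \sum_{s \geq 0}\sum_{l=0}^s \sum_{|\mathbf{j}|=l} a_{l,s} \Pi_\mathbf{j} \mathcal{W}_\mathbf{j}^{(s)}$ the factor $\Pi_\mathbf{j}$ depends only on attributes of strict ancestors of $\mathbf{j}$, while $E[\mathcal{W}_\mathbf{j}^{(s)}\mid\boldsymbol{\mathcal{X}}_\mathbf{j}] = \mathcal{Y}_\mathbf{j}$ depends only on $\boldsymbol{\mathcal{X}}_\mathbf{j}$; by the independence of marks in $\mathcal{T}(\boldsymbol{\mathcal{X}})$ these factor, and the first-moment many-to-one identity yields $E\bigl[\sum_{|\mathbf{j}|=l}\Pi_\mathbf{j}\bigr] = \rho_1^*\rho_1^{l-1}$ for $l\geq 1$ (and $1$ for $l=0$). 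Combined with $\sum_{s\geq l}\binom{s}{l}(1-c-d)^{s-l} = (c+d)^{-(l+1)}$ and summing the resulting geometric series in $l$, this produces $E[\mathcal{R}^*]$; the formula for $E[\mathcal{R}]$ follows by the same argument applied to the non-delayed subtree rooted at a child of $\emptyset$.

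For the variances I decompose
\[
\var(\mathcal{R}^*) = \var\bigl(E[\mathcal{R}^* \mid \mathcal{T}(\boldsymbol{\mathcal{X}})]\bigr) + E\bigl[\var(\mathcal{R}^* \mid \mathcal{T}(\boldsymbol{\mathcal{X}}))\bigr].
\]
Conditional on the tree, $\{\mathcal{W}_\mathbf{j}^{(s)}\}$ are mutually independent, so
\[
\var(\mathcal{R}^* \mid \mathcal{T}(\boldsymbol{\mathcal{X}})) = \sum_\mathbf{j} \Pi_\mathbf{j}^2\, S_{|\mathbf{j}|}\, \mathcal{V}_\mathbf{j}, \qquad S_l := \sum_{s\geq l}\binom{s}{l}^{\!2}(1-c-d)^{2(s-l)}.
\]
The second-moment many-to-one identity $E\bigl[\sum_{|\mathbf{j}|=l}\Pi_\mathbf{j}^2\bigr] = \rho_2^*\rho_2^{l-1}$ for $l\geq 1$, together with $S_0 = 1/(1-(1-c-d)^2)$, reduces $E[\var(\mathcal{R}^*\mid\mathcal{T}(\boldsymbol{\mathcal{X}}))]$ to a multiple of $E[\mathcal{V}_\emptyset]$ plus $\rho_2^*E[\mathcal{V}_1]\sum_{l\geq 1}\rho_2^{l-1}S_l$. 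Writing $S_l = (c+d)^{-2(l+1)}p_l$ and recognising the weights $\rho_2^{l-1}(1-\rho_2)$ as the mass function of $T+1$ for $T\sim\mathrm{Geom}(1-\rho_2)$ yields $\sum_{l\geq 1}\rho_2^{l-1}S_l = (1-\rho_2)^{-1}E\bigl[(c+d)^{-2(T+2)}p_{T+1}\bigr]$; the analogous sum starting from $l=0$ produces the $E[\mathcal{V}_1]$ coefficient in $\var(\mathcal{R})$.

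The remaining and most structural piece is $\var(E[\mathcal{R}^*\mid\mathcal{T}(\boldsymbol{\mathcal{X}})])$. Setting $U_\mathbf{i} := E[\mathcal{R}_\mathbf{i}\mid\mathcal{T}(\boldsymbol{\mathcal{X}})]$ and taking conditional expectation of recursion~\eqref{eq:OpinionRec} at node $\mathbf{i}$, then passing to the $k\to\infty$ limit from Theorem~\ref{thm:graphlim}, yields the pointwise fixed-point equation
\[
U_\mathbf{i} = (c+d)^{-1}\Bigl[\mathcal{Y}_\mathbf{i} + \sum_{j=1}^{\mathcal{N}_\mathbf{i}}\mathcal{C}_{(\mathbf{i},j)}\,U_{(\mathbf{i},j)}\Bigr].
\]
Since the subtrees rooted at the children of $\emptyset$ are i.i.d.\ non-delayed Galton-Watson trees independent of $\boldsymbol{\mathcal{X}}_\emptyset$, the non-root $U$'s are i.i.d.\ copies of a single random variable $U$ solving the branching SFPE $U \eqlaw (c+d)^{-1}\bigl[\mathcal{Y}_1 + \sum_{j=1}^{\mathcal{N}_1}\mathcal{C}_{(1,j)}U_j\bigr]$. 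One application of the law of total variance to this SFPE, conditioning on $\boldsymbol{\mathcal{X}}_1$ so that the i.i.d.\ $U_j$ contribute $\rho_2\var(U)$, solves for $\var(U)$ as the linear combination of $\var(\mathcal{Y}_1)$, $E[\mathcal{W}_1]^2\var\bigl(\sum_j\mathcal{C}_{(1,j)}\bigr)$ and $E[\mathcal{W}_1]\cov\bigl(\mathcal{Y}_1,\sum_j\mathcal{C}_{(1,j)}\bigr)$ divided by $(c+d)^2-\rho_2$ appearing in the theorem. Repeating the same decomposition one step at the root, applied to $U^* = (c+d)^{-1}\bigl[\mathcal{Y}_\emptyset + \sum_j\mathcal{C}_jU_j\bigr]$, supplies the remaining ``root-level'' terms with $\rho_1^*,\rho_2^*$ in place of $\rho_1,\rho_2$. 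The main obstacle is the bookkeeping identifying $\sum_l\rho_2^{l-1}S_l$ with the geometric average in the statement; once that identity is in hand the rest reduces to routine Wald-type algebra on the Galton-Watson tree.
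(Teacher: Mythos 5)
Your argument is correct and arrives at every term in the statement, but it is organized quite differently from the paper's proof. The paper first proves Lemma~\ref{lem:Rk_MeanVariance}, computing $E[\mathcal{R}_\emptyset^{(k+1)}]$ and $\var(\mathcal{R}_\emptyset^{(k+1)})$ at \emph{finite} horizon by conditioning only on $\boldsymbol{\mathcal{X}}_\emptyset$ and peeling off one generation at a time via the recursion $x_{k-1,1}=B_{k-1,1}+\rho_2\,x_{k-2,2}$, and only then passes to the limit $k\to\infty$, evaluating $\lim_k S_{k,m}$, $\lim_k U_{k,m}$, $\lim_k T_{k,m}$ through the geometric random variable $T$ and the quantity $p_T$. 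You instead work directly at stationarity with the series of Proposition~\ref{prop:explicit} and condition once on the \emph{entire} marked tree: the term $E[\var(\mathcal{R}^*\mid\mathcal{T}(\boldsymbol{\mathcal{X}}))]$ is dispatched by the second-moment many-to-one identity together with $\sum_{l}\rho_2^{l}S_l=(1-\rho_2)^{-1}E[(c+d)^{-2(T+1)}p_T]$ (which is exactly the paper's $\lim_k\sum_m\rho_2^mT_{k-m,m}$ computation in disguise), while $\var(E[\mathcal{R}^*\mid\mathcal{T}(\boldsymbol{\mathcal{X}})])$ is obtained from the branching SFPE $U\eqlaw(c+d)^{-1}[\mathcal{Y}_1+\sum_j\mathcal{C}_{(1,j)}U_j]$ for the conditional means plus one root-level step. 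Your decomposition is conceptually cleaner and makes transparent why only $\rho_1,\rho_2$ and the first two conditional moments of $\mathcal{W}$ enter; what it gives up is the finite-$k$ trajectory moments that the paper gets as a by-product of Lemma~\ref{lem:Rk_MeanVariance}. Two small points you should make explicit in a write-up: (i) absolute summability of the series, $\sum_{l\leq s}\sum_{|\mathbf{j}|=l}\Pi_\mathbf{j}a_{l,s}\leq(1-d)^s$, which licenses all the Fubini interchanges and the passage to the limit in the fixed-point equation for $U_\mathbf{i}$; and (ii) the inequality $\rho_2\leq c^2<(c+d)^2$, which guarantees that solving the law-of-total-variance identity for $\var(U)$ by dividing by $(c+d)^2-\rho_2$ is legitimate.
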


\begin{remarks} \label{Rem:NoBotsMeanVar} \
\begin{enumerate}[leftmargin=*]
    \item In the case when $\boldsymbol{\mathcal{X}}_\emptyset \stackrel{\mathcal{D}}{=} \boldsymbol{\mathcal{X}}_1$, i.e., there is no size-bias, then $\mathcal{R}^* \stackrel{\mathcal{D}}{=} \mathcal{R}$ and the formulas for the mean and variance of $\mathcal{R}^*$ are simpler.
    \item If we choose $
W_i^{(k)} = d Z_i^{(k)}  + Q_i \left( c - \sum_{j=1}^{N_i} C(i,j) \right)$ and $P(\mathcal{N}_\emptyset>0) = P(\mathcal{N}_1>0) = 1$, the mean and variance of $\mathcal{R}^*$ reduce to:
\begin{align*}
    E\left[ \mathcal{R}^*\right] &= \frac{1}{c+d}E[\mathcal{Z}_\emptyset]  + \frac{\rho_1^*}{(c+d-\rho_1) (c+d)} E[\mathcal{Z}_1], \\
    \var(\mathcal{R}^*)  &= \frac{d^2}{(c+d)^2}\var(\mathcal{Z}_\emptyset) + \frac{d^2\rho_2^*}{(c+d)^2((c+d)^2-\rho_2)}\var(\mathcal{Z}_1) \notag \\
    & \hspace{5mm} +  \frac{2d^2(c+d-1)}{(c+d)^2(2-(c+d))}E[\var(\mathcal{Z}_\emptyset|\boldsymbol{\mathcal{X}}_\emptyset)] \notag\\
    &\hspace{5mm} +d^2\rho_2^*\left(\frac{1}{1-\rho_2}E\left[ (c+d)^{-2(T+2)}p_{T+1}\right] - \frac{1}{(c+d)^2((c+d)^2-\rho_2)}\right)E[\var(\mathcal{Z}_1|\boldsymbol{\mathcal{X}}_1)].
\end{align*}
\end{enumerate}

\end{remarks}

The proof of Theorem~\ref{thm:LimitR_MeanVariance} is separated into two steps. In the first step (Lemma~\ref{lem:Rk_MeanVariance} below), we compute the mean and variance of $\mathcal{R}_\emptyset^{(k)}$ for any fixed $k \geq 1$. Then, in the main proof of Theorem~\ref{thm:LimitR_MeanVariance}, we take the limit as $k \to \infty$.

\begin{lemma} \label{lem:Rk_MeanVariance}
Suppose $\mathcal{R}_\mathbf{i}^{(0)} \equiv 0$ for all $\mathbf{i} \in \mathcal{T}(\boldsymbol{\mathcal{X}})$. Let $\rho_1^*, \, \rho_1, \, \rho_2^*, \, \rho_2, \, \mathcal{Y}_\mathbf{i}$ and $\mathcal{V}_\mathbf{i}$ be defined as in Theorem~\ref{thm:LimitR_MeanVariance}. 
Then, for any $k \geq 0$,
\begin{align*}
E\left[ \mathcal{R}_\emptyset^{(k+1)} \right] &= E[\mathcal{W}_1] \frac{\rho_1^*}{\rho_1}\sum_{s=0}^{k} (1-c - d+\rho_1)^s + \left( E[\mathcal{W}_\emptyset] - E[\mathcal{W}_1] \frac{\rho_1^*}{\rho_1} \right) \sum_{s=0}^{k} (1-c-d)^s,\\
E\left[ \mathcal{R}_\mathbf{i}^{(k+1)} \right] &= E[\mathcal{W}_1]  \sum_{s=0}^{k} (1- c - d + \rho_1)^s, \qquad \mathbf{i} \neq \emptyset.
\end{align*}
Moreover, for $k \geq 0$:
\begin{align*}
\var\left(\mathcal{R}_\emptyset^{(k+1)} \right) &= E[\mathcal{W}_1]^2 \var\left(  \sum_{i=1}^{\mathcal{N}_\emptyset} \mathcal{C}_{i}\right) S_{k,0}^2 + U_{k,0}^2 \var(\mathcal{Y}_\emptyset) + 2 E[\mathcal{W}_1] S_{k,0} U_{k,0} \cov\left( \sum_{i=1}^{\mathcal{N}_\emptyset} \mathcal{C}_{i}, \mathcal{Y}_\emptyset\right) + E[\mathcal{V}_\emptyset] T_{k,0} \\
&\hspace{5mm} + \rho_2^*\rho_2^{k-1} \var(\mathcal{W}_1) +  \rho_2^* E[\mathcal{W}_1]^2 \var\left(\sum_{i = 1}^{\mathcal{N}_1} \mathcal{C}_{(1,i)}\right) \sum_{m=1}^{k-1} \rho_2^{m-1} S_{k-m,m}^2 \\
&\hspace{5mm} + \rho_2^* E[\mathcal{V}_1]  \sum_{m=1}^{k-1} \rho_2^{m-1} T_{k-m,m}  +  \rho_2^* \var(\mathcal{Y}_1) \sum_{m=1}^{k-1} \rho_2^{m-1} U_{k-m,m}^2 \\
&\hspace{5mm} + 2 \rho_2^* E[\mathcal{W}_1] \cov\left(\sum_{i = 1}^{\mathcal{N}_1} \mathcal{C}_{(1,i)}, \mathcal{Y}_1\right)  \sum_{m=1}^{k-1} \rho_2^{m-1} S_{k-m,m} U_{k-m,m},\\
\var\left(\mathcal{R}_\mathbf{i}^{(k+1)} \right) &=    \rho_2^k \var(\mathcal{W}_1) + E[\mathcal{W}_1]^2 \var\left(\sum_{i = 1}^{\mathcal{N}_1} \mathcal{C}_{(1,i)}\right)  \sum_{m=0}^{k-1} \rho_2^m S_{k-m,m}^2 + E[\mathcal{V}_1] \sum_{m=0}^{k-1} \rho_2^m T_{k-m,m} \\
&\hspace{5mm} + \var(\mathcal{Y}_1) \sum_{m=0}^{k-1} \rho_2^m U_{k-m,m}^2 + 2 E[\mathcal{W}_1] \cov\left(\sum_{i = 1}^{\mathcal{N}_1} \mathcal{C}_{(1,i)}, \mathcal{Y}_1\right) \sum_{m=0}^{k-1} \rho_2^m S_{k-m,m} U_{k-m,m},
\end{align*}
where
\begin{align*}
S_{k,m} &=  \sum_{s=1}^{k} \sum_{l=1}^s a_{l+m,s+m} \rho_1^{l-1}, \\
U_{k,m} &= \sum_{s=0}^k a_{m, s+m}, \\
T_{k,m} &=  \sum_{s=0}^k a_{m,s+m}^2.
\end{align*}
\end{lemma}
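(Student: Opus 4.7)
My starting point is the finite-$k$ analogue of the explicit representation from Proposition~\ref{prop:explicit}. Iterating recursion~\eqref{eq:recursion_on_graph} from the zero initial condition and applying the identity $\Delta^{s} f(\mathbf{i}) = \sum_{l=0}^{s}\sum_{|\mathbf{j}|=l} (\Pi_{(\mathbf{i},\mathbf{j})}/\Pi_\mathbf{i})\, a_{l,s}\, f((\mathbf{i},\mathbf{j}))$ established in the proof of Proposition~\ref{prop:explicit} yields
\[
\mathcal{R}_\mathbf{i}^{(k+1)} \eqlaw \sum_{s=0}^{k}\sum_{l=0}^{s}\sum_{|\mathbf{j}|=l} \frac{\Pi_{(\mathbf{i},\mathbf{j})}}{\Pi_\mathbf{i}}\, a_{l,s}\, \mathcal{W}^{(s)}_{(\mathbf{i},\mathbf{j})},
\]
since the signals are conditionally i.i.d.~in time, so that relabeling times does not affect means or variances. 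For the mean I take expectations term by term. In a (delayed) marked Galton--Watson tree, $E[\sum_{|\mathbf{j}|=l}\Pi_{(\mathbf{i},\mathbf{j})}/\Pi_\mathbf{i}]$ equals $\rho_1^{l}$ when $\mathbf{i}\neq\emptyset$, and equals $\rho_1^{*}\rho_1^{l-1}$ for $l\ge 1$ (with value $1$ for $l=0$) when $\mathbf{i}=\emptyset$. Interchanging the order of summation and applying the binomial identity $\sum_{l=0}^{s} a_{l,s}\,\rho_1^{l} = (1-c-d+\rho_1)^{s}$ produces the non-root formula, while separating the $l=0$ term in the root case produces the $\rho_1^{*}/\rho_1$ adjustment appearing in the root formula.

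For the variance I use the conditional-variance identity with respect to $\mathscr{F}$, the $\sigma$-algebra generated by the tree topology and all vertex marks. Conditional on $\mathscr{F}$ the signals $\{\mathcal{W}_\mathbf{j}^{(s)}\}$ are independent with variance $\mathcal{V}_\mathbf{j}$, which yields
\[
\var(\mathcal{R}_\mathbf{i}^{(k+1)}\mid \mathscr{F}) = \sum_{l=0}^{k}\sum_{|\mathbf{j}|=l}\Bigl(\frac{\Pi_{(\mathbf{i},\mathbf{j})}}{\Pi_\mathbf{i}}\Bigr)^{2} \mathcal{V}_{(\mathbf{i},\mathbf{j})}\sum_{s=l}^{k} a_{l,s}^{2}.
\]
Taking expectations, recognizing $\sum_{s=l}^{k} a_{l,s}^{2} = T_{k-l,l}$, and using $E[\sum_{|\mathbf{j}|=l} (\Pi_{(\mathbf{i},\mathbf{j})}/\Pi_\mathbf{i})^{2}] = \rho_2^{l}$ in the non-root case (and $\rho_2^{*}\rho_2^{l-1}$ for $l\geq 1$ at the root) accounts for the $E[\mathcal{V}]\,T$ contributions in both stated formulas. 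The outer piece $\var(E[\mathcal{R}_\mathbf{i}^{(k+1)}\mid \mathscr{F}])$ is a quadratic form in the per-generation sums $Z_l := \sum_{|\mathbf{j}|=l}(\Pi_{(\mathbf{i},\mathbf{j})}/\Pi_\mathbf{i})\,\mathcal{Y}_{(\mathbf{i},\mathbf{j})}$, weighted by the $U$-type coefficients $\sum_{s=l}^{k} a_{l,s}$; by symmetry it suffices to compute $\cov(Z_{l_1}, Z_{l_2})$ for $l_1 \leq l_2$ and assemble the double sum.

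The decisive step evaluates $\cov(Z_{l_1}, Z_{l_2})$ by splitting according to the depth $m$ of the most recent common ancestor of the two indexing paths $\mathbf{j}, \mathbf{j}'$. Beyond that ancestor the two branches become independent by the branching property, so the joint contribution factors into the second-moment of the shared prefix, equal to $\rho_2^{m}$ in the non-root case and $\rho_2^{*}\rho_2^{m-1}$ at the root for $m\geq 1$, times a one-step branch-point covariance that is a linear combination of $\var(\sum_i \mathcal{C}_{(1,i)})$, $\var(\mathcal{Y}_1)$, and $\cov(\sum_i \mathcal{C}_{(1,i)}, \mathcal{Y}_1)$ (with the starred analogues replacing these for the $m=0$ term in the root case), multiplied by the expected descendant path factors $\rho_1^{l_1-m-1}\rho_1^{l_2-m-1}$ on each branch. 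Summing the outer $(l_1,l_2,s)$-indices collapses the inner sums into the auxiliary quantities $S_{k-m,m}$ and $U_{k-m,m}$, and the diagonal contribution at the deepest generation combines with the $m=k$ piece of the signal-noise sum via $\var(\mathcal{W}_1) = E[\mathcal{V}_1] + \var(\mathcal{Y}_1)$ to produce the $\rho_2^{k}\,\var(\mathcal{W}_1)$ term (or $\rho_2^{*}\rho_2^{k-1}\,\var(\mathcal{W}_1)$ at the root). The main obstacle is this combinatorial bookkeeping: I must verify that the $(s,l,\mathbf{j},\mathbf{j}')$-sums regroup cleanly by shared-ancestor depth $m$, that the $m=0$ contribution in the root formula correctly isolates the starred moments while the non-root formula uses only non-starred moments throughout, and that the resulting inner $a_{l,s}$-sums match $S_{k-m,m}, T_{k-m,m}, U_{k-m,m}$ exactly as defined.
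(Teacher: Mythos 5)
Your mean computation coincides with the paper's: both start from the finite-horizon version of the series in Proposition~\ref{prop:explicit}, use $E[\sum_{|\mathbf{j}|=l}\Pi_\mathbf{j}] = (\rho_1^*)^{l\wedge 1}\rho_1^{(l-1)^+}$, and close the sum with the binomial identity $\sum_{l=0}^s a_{l,s}\rho_1^l=(1-c-d+\rho_1)^s$. For the variance you take a genuinely different route. The paper conditions only on the root mark $\boldsymbol{\mathcal{X}}_\emptyset$, which isolates the first four terms plus a remainder $\rho_2^*x_{k-1,1}$, and then drives a one-generation peeling recursion $x_{k-1,1}=B_{k-1,1}+\rho_2 x_{k-2,2}$ whose iteration produces the sums over $m$ automatically, with each $B_{k-m,m}$ delivering one full block of $S$, $U$, $T$ and covariance terms. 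You instead condition on the entire marked tree, split off the pure-noise part (which yields the $E[\mathcal{V}]\,T$ contributions directly), and compute the variance of the conditional mean as a two-point function over pairs of generations, regrouped by the depth $m$ of the most recent common ancestor of the two indexing paths. Both are valid and land on the same formula. Your decomposition gives each term a direct probabilistic meaning (the index $m$ is literally the depth at which two contributions decorrelate, with the starred moments appearing exactly when the branch point is the root), at the cost of heavier bookkeeping: one must check that the double sum over generation pairs $(l_1,l_2)$ branching at depth $m$ factors into $S_{k-m,m}^2$, $S_{k-m,m}U_{k-m,m}$ and $U_{k-m,m}^2$ --- which it does, via the identity $\sum_{l>m}U_{k-l,l}\,\rho_1^{l-m-1}=S_{k-m,m}$ after shifting indices --- and that the deepest-generation diagonal recombines with the noise term through $\var(\mathcal{W}_1)=E[\mathcal{V}_1]+\var(\mathcal{Y}_1)$, which you correctly flag. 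The paper's recursion sidesteps this pairwise accounting entirely but is less transparent about the provenance of each term.
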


\begin{proof}
To start, recall that in a delayed Galton-Watson process the mark of the root node, $\boldsymbol{\mathcal{X}}_\emptyset$ is allowed to be different from that of all other nodes (a feature needed to model the size-bias incurred while exploring graphs). The sequence of marks $\{ \boldsymbol{\mathcal{X}}_\mathbf{i}: \mathbf{i} \in \mathcal{U}\}$ are assumed to be independent of each other. The main idea behind the proofs of the expressions for $E[\mathcal{R}_\emptyset^{(k)}]$ and $\var(\mathcal{R}_\emptyset^{(k)})$ is to derive recursive relations based on the tree's structure. The first observation we need to make is to use Proposition~\ref{prop:explicit} and the i.i.d.~nature of the noises given the vertex marks to obtain:   
$$
\mathcal{R}_\emptyset^{(k+1)} = \sum_{s=0}^{k} \sum_{l=0}^s \sum_{ |\mathbf{j}| = l } \Pi_{\mathbf{j}} a_{l,s} \mathcal{W}_\mathbf{j}^{(k-s)}  \eqlaw \sum_{s=0}^{k} \sum_{l=0}^s \sum_{ |\mathbf{j}| = l} \Pi_{\mathbf{j}} a_{l,s} \mathcal{W}_\mathbf{j}^{(s)}.
$$

To compute the mean, note that
\begin{align}
E\left[\mathcal{R}_\emptyset^{(k+1)} \right] &= \sum_{s=0}^{k} \sum_{l=1}^s a_{l,s} E\left[ \sum_{ |\mathbf{j}| = l} \Pi_{\mathbf{j}} \mathcal{W}_\mathbf{j}^{(s)} \right]  + \sum_{s=0}^{k} a_{0,s} E\left[ \mathcal{W}_\emptyset^{(s)} \right] \notag \\
&= E[ \mathcal{W}_1] \sum_{s=0}^{k} \sum_{l=1}^s a_{l,s} E\left[ \sum_{|\mathbf{j}| = l} \Pi_{\mathbf{j}}  \right]  + E\left[ \mathcal{W}_\emptyset \right] \sum_{s=0}^{k} a_{0,s}. \label{eq:MeanFromRoot}
\end{align}
We will now derive a recursion to compute the expectation of the sum of the weights, for which it is useful to remove the influence of the root node's mark first. To this end, define 
\begin{equation} \label{eq:MeanNoRoot}
b_0 : = 1, \quad \text{and} \quad b_l :=  E\left[ \sum_{|(1,\mathbf{j})| = l+1} \frac{\Pi_{(1,\mathbf{j})}}{\mathcal{C}_1} \right], \qquad l \geq 1,
\end{equation}
and recall that 
$$
\rho_1 =E\left[\sum_{m = 1}^{\mathcal{N}_1}\mathcal{C}_{(1,m)} \right] = cP(\mathcal{N}_1>0).
$$ 
Expanding the sum and then conditioning on $\boldsymbol{\mathcal{X}}_1$, $b_l$ can be rewritten as
\begin{align*}
b_l &= E\left[  \sum_{m=1}^{\mathcal{N}_1} \mathcal{C}_{(1,m)} \sum_{|(1, m, \mathbf{j})| = l+1 } \frac{ \Pi_{(1,m,\mathbf{j})} }{\Pi_{(1,m)}}  \right]  = E\left[  \sum_{m=1}^{\mathcal{N}_1} \mathcal{C}_{(1,m)} E\left[ \sum_{|(1, \mathbf{j}) | = l} \frac{ \Pi_{(1,\mathbf{j})} }{\mathcal{C}_1} \right] \right] = \rho_1 b_{l-1}. 
\end{align*}
Iterating the recursion for $b_l$ gives:
$$
b_l = \rho_1^{l}, \qquad l \geq 1.
$$
We now compute the expectation including the root's mark by noting that
$$E\left[ \sum_{|\mathbf{j}| = l} \Pi_{\mathbf{j}}  \right] = E\left[ \sum_{m=1}^{\mathcal{N}_\emptyset} \mathcal{C}_{m} \sum_{|(m,\mathbf{j})| = l} \frac{\Pi_{(m,j)}}{\mathcal{C}_m} \right] = E\left[ \sum_{m=1}^{\mathcal{N}_\emptyset} \mathcal{C}_{m} \rho_1^{(l-1)^+}   \right] = (\rho_1^*)^{l \wedge 1} \rho_1^{(l-1)^+}, \qquad l \geq 0, $$
where $\rho_1^* = E\left[\sum_{m = 1}^{\mathcal{N}_{\emptyset}} \mathcal{C}_m \right]$. 
Now use \eqref{eq:MeanFromRoot} to obtain that
\begin{align*}
E\left[ \mathcal{R}_\emptyset^{(k+1)} \right] &= E[ \mathcal{W}_1] \sum_{s=0}^{k} \sum_{l=1}^s a_{l,s} {\left(\rho_1^*\right)}^{l\wedge 1} \rho_1^{(l-1)^+}+ E\left[ \mathcal{W}_\emptyset \right] \sum_{s=0}^{k} a_{0,s} \\
&= E[\mathcal{W}_1] \frac{\rho_1^*}{\rho_1} \sum_{s=0}^{k} \sum_{l=1}^s \binom{s}{l} (1-c-d)^{s-l}  \rho_1^l + E\left[ \mathcal{W}_\emptyset \right] \sum_{s=0}^{k} (1-c-d)^s \\
&= E[\mathcal{W}_1] \frac{\rho_1^*}{\rho_1} \sum_{s=0}^{k} \left( (1-c- d+\rho_1)^s - (1-c-d)^s \right) + E\left[ \mathcal{W}_\emptyset \right] \sum_{s=0}^{k} (1-c-d)^s \\
&= E[\mathcal{W}_1] \frac{\rho_1^*}{\rho_1}\sum_{s=0}^{k} (1-c - d+\rho_1)^s + \left( E[\mathcal{W}_\emptyset] - E[\mathcal{W}_1] \frac{\rho_1^*}{\rho_1} \right) \sum_{s=0}^{k} (1-c-d)^s. 
\end{align*}
This completes the proof for the mean of the root. For any other node $\mathbf{i} \neq \emptyset$ the entire subtree rooted at $\mathbf{i}$ consists of nodes whose marks are i.i.d., and therefore, we can simply replace $\rho_1^*$ and $E[\mathcal{W}_\emptyset]$ above with $\rho_1$ and $E\left[ \mathcal{W}_1 \right]$, which yields:
$$E\left[ \mathcal{R}_\mathbf{i}^{(k+1)} \right] = E[\mathcal{W}_1] \sum_{s=0}^{k} (1-c - d+\rho_1)^s, \qquad \mathbf{i} \neq \emptyset.$$

To compute the variance of the root we start by writing:
$$ \mathcal{R}_\emptyset^{(k+1)} \stackrel{\mathcal{D}}{=}    \sum_{i=1}^{\mathcal{N}_\emptyset} \mathcal{C}_{i} \sum_{s=1}^{k} \sum_{l=1}^s \sum_{ |(i,\mathbf{j})| = l} \frac{\Pi_{(i,\mathbf{j})}}{\mathcal{C}_{i}} a_{l,s} \mathcal{W}_{(i,\mathbf{j})}^{(s)} +  \sum_{s=0}^{k}  a_{0,s} \mathcal{W}_\emptyset^{(s)},$$
which allows us to separate the effect of the root as we did for the mean. Next, condition on $\boldsymbol{\mathcal{X}}_\emptyset$ and use the law of total variance, we obtain
\begin{align*}
\var\left( \mathcal{R}_\emptyset^{(k+1)} \right)
&= \var\left( E\left[ \left.  \sum_{i=1}^{\mathcal{N}_\emptyset} \mathcal{C}_{i} \sum_{s=1}^{k} \sum_{l=1}^s \sum_{ |(i,\mathbf{j})| = l} \frac{\Pi_{(i,\mathbf{j})}}{\mathcal{C}_{i}} a_{l,s} \mathcal{W}_{(i,\mathbf{j})}^{(s)} +  \sum_{s=0}^{k}  a_{0,s} \mathcal{W}_\emptyset^{(s)}  \right| \boldsymbol{\mathcal{X}}_\emptyset \right]  \right) \\
&\hspace{5mm} + E\left[ \var\left( \left.  \sum_{i=1}^{\mathcal{N}_\emptyset} \mathcal{C}_{i}\sum_{s=1}^{k} \sum_{l=1}^s \sum_{|(i,\mathbf{j})| = l} \frac{\Pi_{(i,\mathbf{j})}}{\mathcal{C}_{i}} a_{l,s} \mathcal{W}_{(i,\mathbf{j})}^{(s)} +  \sum_{s=0}^{k}  a_{0,s} \mathcal{W}_\emptyset^{(s)} \right| \boldsymbol{\mathcal{X}}_\emptyset \right) \right] \\
&= \var\left(   \sum_{i=1}^{\mathcal{N}_\emptyset} \mathcal{C}_{i} \sum_{s=1}^{k} \sum_{l=1}^s a_{l,s} E\left[   \sum_{|(1,\mathbf{j})| = l} \frac{\Pi_{(1,\mathbf{j})}}{\mathcal{C}_1} \mathcal{W}_{(1,\mathbf{j})}^{(s)}  \right] +  \sum_{s=0}^{k}  a_{0,s} E\left[ \left. \mathcal{W}_\emptyset^{(s)} \right| \boldsymbol{\mathcal{X}}_\emptyset \right]  \right) \\
&\hspace{5mm} + E\left[  \sum_{i=1}^{\mathcal{N}_\emptyset} \mathcal{C}_{i}^2 \var\left(  \sum_{s=1}^{k} \sum_{l=1}^s \sum_{|(1,\mathbf{j})| = l} \frac{\Pi_{(1,\mathbf{j})}}{\mathcal{C}_1} a_{l,s} \mathcal{W}_{(1,\mathbf{j})}^{(s)}  \right) +  \var\left( \left. \sum_{s=0}^{k}  a_{0,s} \mathcal{W}_\emptyset^{(s)} \right| \boldsymbol{\mathcal{X}}_\emptyset \right) \right] \\
&= \var\left(   \sum_{i=1}^{\mathcal{N}_\emptyset} \mathcal{C}_{i} E[\mathcal{W}_1] \sum_{s=1}^{k} \sum_{l=1}^s a_{l,s} E\left[   \sum_{|(1,\mathbf{j})| = l} \frac{\Pi_{(1,\mathbf{j})}}{\mathcal{C}_1} \right] +  \mathcal{Y}_\emptyset \sum_{s=0}^{k}  a_{0,s}  \right) \\
&\hspace{5mm} + E\left[   \sum_{i=1}^{\mathcal{N}_\emptyset} \mathcal{C}_{i}^2 \right]  \var\left(  \sum_{s=1}^{k} \sum_{l=1}^s \sum_{ |(1,\mathbf{j})| = l} \frac{\Pi_{(1,\mathbf{j})}}{\mathcal{C}_1} a_{l,s} \mathcal{W}_{(1,\mathbf{j})}^{(s)}  \right) +  E\left[ \mathcal{V}_\emptyset  \right]\sum_{s=0}^{k}  a_{0,s}^2,   
\end{align*}

where $\mathcal{Y}_\emptyset = E\left[ \left. \mathcal{W}_\emptyset^{(s)} \right| \boldsymbol{\mathcal{X}}_\emptyset \right]$ and $\mathcal{V}_\emptyset = \var\left( \left.  \mathcal{W}_\emptyset^{(s)} \right| \boldsymbol{\mathcal{X}}_\emptyset \right)$. Recall from our derivation of the mean that
$$
b_{l-1} = E\left[   \sum_{|(1,\mathbf{j})| = l} \frac{\Pi_{(1,\mathbf{j})}}{\mathcal{C}_1} \right] =  \rho_1^{l-1}.
$$

Since the $\{ \mathcal{W}_\mathbf{i}^{(s)} : s \geq 1\}$ are i.i.d., then
$$
 \sum_{s=1}^{k} \sum_{l=1}^s \sum_{ |(1,\mathbf{j})| = l} \frac{\Pi_{(1,\mathbf{j})}}{\mathcal{C}_1} a_{l,s} \mathcal{W}_{(i,\mathbf{j})}^{(s)} \stackrel{\mathcal{D}}{=}   \sum_{s=0}^{k-1} \sum_{l=0}^{s} \sum_{|\mathbf{j}| = l} \Pi_{\mathbf{j} } a_{l+1,s+1} \mathcal{W}_{\mathbf{j}}^{(s)}, 
$$
where the tree on the right-hand side has $\boldsymbol{\mathcal{X}}_\emptyset \eqlaw \boldsymbol{\mathcal{X}}_1$. Next, define for $k \geq 0, m \geq 1$:
\begin{align*}
x_{k,m} &: =  \var\left(  \sum_{s=0}^{k} \sum_{l=0}^{s} \sum_{|\mathbf{j}| = l} \Pi_{\mathbf{j}} a_{l+m,s+m} \mathcal{W}_{\mathbf{j}}^{(s)}  \right), \quad \text{where} \quad \boldsymbol{\mathcal{X}}_\emptyset \eqlaw \boldsymbol{\mathcal{X}}_1,
\end{align*}
and note that we can rewrite the variance of $\mathcal{R}_\emptyset^{(k+1)}$ as:
\begin{align*}
\var\left( \mathcal{R}_\emptyset^{(k+1)} \right) &=  \var\left(  \sum_{i=1}^{\mathcal{N}_\emptyset} \mathcal{C}_{i} E[\mathcal{W}_1] \sum_{s=1}^{k} \sum_{l=1}^s a_{l,s} \rho_1^{l-1} +  \mathcal{Y}_\emptyset \sum_{s=0}^{k}  a_{0,s}  \right)  + E\left[ \mathcal{V}_\emptyset  \right]\sum_{s=0}^{k}  a_{0,s}^2  + \rho_2^*  x_{k-1,1} . 
\end{align*}
The same arguments used to derive the above also yield
\begin{align*}
x_{k-1,1} &=  \var\left(  \sum_{i=1}^{\mathcal{N}_1} \mathcal{C}_{(1,i)} E[\mathcal{W}_1] \sum_{s=1}^{k-1} \sum_{l=1}^s a_{l+1,s+1} \rho_1^{l-1}+  \mathcal{Y}_1 \sum_{s=0}^{k-1}  a_{1,s+1}  \right)  + \rho_2  x_{k-2,2} +  E\left[ \mathcal{V}_1  \right]\sum_{s=0}^{k-1}  a_{1,s+1}^2 \\
&=: B_{k-1,1} + \rho_2 x_{k-2,2},
\end{align*}
where for any $k \geq 0, m \geq 1$:
\begin{align*}
B_{k,m} &= \var\left( \sum_{i = 1}^{\mathcal{N}_1} \mathcal{C}_{(1,i)} E[\mathcal{W}_1] \sum_{s=1}^{k} \sum_{l=1}^s a_{l+m,s+m} \rho_1^{l-1} +  \mathcal{Y}_1 \sum_{s=0}^{k}  a_{m,s+m}  \right) + E[\mathcal{V}_1] \sum_{s=0}^{k} a_{m,s+m}^2 \\
&= S_{k,m}^2 E[\mathcal{W}_1]^2 \var\left(\sum_{i = 1}^{\mathcal{N}_1} \mathcal{C}_{(1,i)}\right) + U_{k,m}^2 \var (\mathcal{Y}_1) + 2 S_{k,m} U_{k,m} E[\mathcal{W}_1] \cov\left(\sum_{i = 1}^{\mathcal{N}_1} \mathcal{C}_{(1,i)}, \mathcal{Y}_1\right) + E[\mathcal{V}_1] T_{k,m}
\end{align*}
where
\begin{align*}
 S_{k,m} =  \sum_{s=1}^{k} \sum_{l=1}^s a_{l+m,s+m} \rho_1^{l-1}, \quad U_{k,m} = \sum_{s=0}^k a_{m, s+m}, \quad \text{and} \quad T_{k,m} =  \sum_{s=0}^k a_{m,s+m}^2.
\end{align*}

It follows from iterating the recursion that
\begin{align*}
x_{k-1,1} &= \rho_2 x_{k-2,2} + B_{k-1,1} = \rho_2^2 x_{k-3,3} + \rho_2 B_{k-2,2} + B_{k-1,1} = \rho_2^{k-1} x_{0,k} + \sum_{m=1}^{k-1} \rho_2^{m-1} B_{k-m, m} \\
&= \rho_2^{k-1} \var\left( a_{k,k} \mathcal{W}_1^{(0)} \right) + \var\left(\sum_{i = 1}^{\mathcal{N}_1} \mathcal{C}_{(1,i)}\right) E[\mathcal{W}_1]^2 \sum_{m=1}^{k-1} \rho_2^{m-1} S_{k-m,m}^2 + E[\mathcal{V}_1] \sum_{m=1}^{k-1} \rho_2^{m-1} T_{k-m,m}  \\
&\hspace{5mm} + \var(\mathcal{Y}_1) \sum_{m=1}^{k-1} \rho_2^{m-1} U_{k-m,m}^2  + 2 E[\mathcal{W}_1] \cov\left(\sum_{i = 1}^{\mathcal{N}_1} \mathcal{C}_{(1,i)}, \mathcal{Y}_1\right)   \sum_{m=1}^{k-1} \rho_2^{m-1} S_{k-m,m} U_{k-m,m} .
\end{align*}
Combining the above with our previous computations, we obtain:
\begin{align*}
\var\left(\mathcal{R}_\emptyset^{(k+1)} \right) &= E[\mathcal{W}_1]^2 \var\left(  \sum_{i=1}^{\mathcal{N}_\emptyset} \mathcal{C}_{i}\right) S_{k,0}^2 + U_{k,0}^2 \var(\mathcal{Y}_\emptyset) + 2 S_{k,0} U_{k,0} E[\mathcal{W}_1] \cov\left( \sum_{i=1}^{\mathcal{N}_\emptyset} \mathcal{C}_{i}, \mathcal{Y}_\emptyset\right) + E[\mathcal{V}_\emptyset] T_{k,0} \\
&\hspace{5mm} + \rho_2^*\rho_2^{k-1} \var(\mathcal{W}_1) +  \rho_2^* \var\left(\sum_{i = 1}^{\mathcal{N}_1} \mathcal{C}_{(1,i)}\right) E[\mathcal{W}_1]^2 \sum_{m=1}^{k-1} \rho_2^{m-1} S_{k-m,m}^2 \\
&\hspace{5mm} + \rho_2^* E[\mathcal{V}_1]  \sum_{m=1}^{k-1} \rho_2^{m-1} T_{k-m,m}  +  \rho_2^* \var(\mathcal{Y}_1) \sum_{m=1}^{k-1} \rho_2^{m-1} U_{k-m,m}^2 \\
&\hspace{5mm} + 2 \rho_2^* E[\mathcal{W}_1]  \cov\left(\sum_{i = 1}^{\mathcal{N}_1} \mathcal{C}_{(1,i)}, \mathcal{Y}_1\right)  \sum_{m=1}^{k-1} \rho_2^{m-1} S_{k-m,m} U_{k-m,m}. 
\end{align*}
The formula for $\var(\mathcal{R}_\mathbf{i}^{(k+1)})$ for $\mathbf{i} \neq \emptyset$ is obtained by assuming $\boldsymbol{\mathcal{X}}_\emptyset\stackrel{\mathcal{D}}{=}\boldsymbol{\mathcal{X}}_1$ and simplifying the expression. This completes the proof. 
\end{proof}

We are now ready to prove Theorem~\ref{thm:LimitR_MeanVariance} giving the characterization of the mean and variance of the limiting opinion distribution. These are obtained by taking limits in the expression from Lemma \ref{lem:Rk_MeanVariance}.

\begin{proof}[Proof of Theorem~\ref{thm:LimitR_MeanVariance}.]
We start by computing the mean of the root. Using the expression derived in Lemma~\ref{lem:Rk_MeanVariance}, and taking the limit as $k \to \infty$ we obtain that the mean of the root can be written as:
\begin{align*}
E\left[\mathcal{R}^*\right] &= \lim_{k \to \infty} E[R_{\emptyset}^{(k)}] \\
&= E[W_1] \frac{\rho_1^*}{\rho_1}\lim_{k \to \infty}\sum_{s=0}^{k-1} (1-c - d+\rho_1)^s + \left( E[W_\emptyset] - E[W_1] \frac{\rho_1^*}{\rho_1} \right) \lim_{k \to \infty}\sum_{s=0}^{k-1} (1-c-d)^s \\
&= E[W_1] \frac{\rho_1^*}{\rho_1} \cdot 
\frac{1}{c+d-\rho_1}+ \left( E[W_\emptyset] - E[W_1] \frac{\rho_1^*}{\rho_1} \right) \frac{1}{c+d} \\
&= \frac{1}{c+d} E[W_\emptyset] + E[W_1]  \frac{\rho_1^*}{(c+d-\rho_1) (c+d)},
\end{align*}
where $\rho_1^* = E\left[\sum_{i = 1}^{\mathcal{N}_{\emptyset}} \mathcal{C}_{i} \right]$ and $ \rho_1 = E\left[\sum_{i = 1}^{\mathcal{N}_{1}}\mathcal{C}_{(1,i)}\right]$. 
Similarly, for $\mathbf{i} \neq \emptyset$, 
\begin{align*}
E\left[\mathcal{R}\right] = \lim_{k \to \infty} E\left[\mathcal{R}_{\mathbf{i}}^{(k)}\right] & = E\left[\mathcal{W}_1\right] \lim_{k \to \infty}\sum_{s=0}^{k} (1- c - d + \rho_1)^s = \frac{E\left[\mathcal{W}_1\right] }{c+d-\rho_1}.
\end{align*}
This completes the proof for the means. 

To compute the variances of the limiting opinions, the main idea is to use the characterizations for $\var\left(\mathcal{R}_\emptyset^{(k+1)} \right)$ and $\var\left(\mathcal{R}_\mathbf{i}^{(k+1)} \right)$ in Lemma~\ref{lem:Rk_MeanVariance}, take the limit as $k\to\infty$, and then apply the monotone convergence theorem to interchange the limits and the infinite sums in order to obtain the final expressions. We start by computing the variance of $\mathcal{R}$ since it has a simpler expression. Note that the bounded convergence theorem followed by the expressions derived in Lemma~\ref{lem:Rk_MeanVariance}, gives
\begin{align*}
\var\left(\mathcal{R} \right) & = \lim_{k \to \infty} \var\left(\mathcal{R}_\mathbf{i}^{(k+1)} \right) \\
& = \var(\mathcal{W}_1) \lim_{k \to \infty} \rho_2^k + E[\mathcal{W}_1]^2 \var\left(\sum_{i = 1}^{\mathcal{N}_1} \mathcal{C}_{(1,i)}\right)  \lim_{k \to \infty} \sum_{m=0}^{k-1} \rho_2^m S_{k-m,m}^2 + E[\mathcal{V}_1] \lim_{k \to \infty} \sum_{m=0}^{k-1}  \rho_2^m T_{k-m,m} \\
&\hspace{5mm} + \var(\mathcal{Y}_1) \lim_{k \to \infty} \sum_{m=0}^{k-1} \rho_2^m U_{k-m,m}^2 + 2 E[\mathcal{W}_1] \cov\left(\sum_{i = 1}^{\mathcal{N}_1} \mathcal{C}_{(1,i)}, \mathcal{Y}_1\right) \lim_{k \to \infty} \sum_{m=0}^{k-1} \rho_2^m S_{k-m,m} U_{k-m,m},
\end{align*}
where 
$$
\rho_2^* = E\left[\sum_{i = 1}^{\mathcal{N}_\emptyset}\mathcal{C}_{i}^2\right],
\qquad
\rho_2 = E\left[\sum_{i = 1}^{\mathcal{N}_1}\mathcal{C}_{(1,i)}^2\right],
\qquad
\mathcal{Y}_\mathbf{i} = E[\mathcal{W}_\mathbf{i}^{(s)}|\boldsymbol{\mathcal{X}}_\mathbf{i}],
\qquad
\mathcal{V}_\mathbf{i} = \var(\mathcal{W}_\mathbf{i}^{(s)}|\boldsymbol{\mathcal{X}}_\mathbf{i}),
$$
\begin{align*}
 S_{k,m} =  \sum_{s=1}^{k} \sum_{l=1}^s a_{l+m,s+m} \rho_1^{l-1}, \quad U_{k,m} = \sum_{s=0}^k a_{m, s+m}, \quad \text{and} \quad T_{k,m} =  \sum_{s=0}^k a_{m,s+m}^2.
\end{align*}
Since the assumptions of Theorem~\ref{thm:treelim} include $d > 0$, then $\rho_2 < 1$, which gives $\rho_2^k \to 0$ as $k \to \infty$. To compute the limit of $\sum_{m=0}^{k-1} \rho_2^m S_{k-m,m}^2$ as $k \to \infty$, it is useful to rewrite the sum as the expectation of a geometric random variable on $\{0,1,2,\dots\}$ with success probability $1-\rho_2$, which we will denote $T$, as follows:
\begin{align*}
 \sum_{m = 0}^{k-1} \rho_2^m S_{k-m,m}^2& =  \frac{1}{1-\rho_2} \sum_{m = 0}^{\infty} \rho_2^m (1- \rho_2)  \left(\sum_{s = 1}^{k-m}\sum_{l = 1}^s a_{l+m, s+ m}\rho_1^{l-1} \right)^21(m < k) \\
& = \frac{1}{1-\rho_2} E\left[ \left(\sum_{s = 1}^{k-T}\sum_{l = 1}^s a_{l+T, s+ T}\rho_1^{l-1} \right)^21(T < k)\right].
\end{align*}
To compute the expectation, define 
$$
X_k : = \sum_{s = 1}^{k-T}\sum_{l = 1}^s a_{l+T, s+ T}\rho_1^{l-1}1(T< k).
$$ 
Note that $\{X_k: k \geq T+1\} $ is a nonnegative and monotone increasing sequence of random variables, so by the monotone convergence theorem we have that $\lim_{k \to \infty} E[X_k^2] = E\left[ \lim_{k \to \infty} X_k^2 \right]$. Moreover, 
\begin{align*}
	 \lim_{k \to \infty} X_k & = \sum_{s = 1}^\infty \sum_{l = 1}^s \binom{s+T}{s-l} (1-c-d)^{s-l}\rho_1^{l-1} \\
	 & = \sum_{l = 1}^\infty \sum_{s= l}^\infty \binom{s+T}{s-l} (1-c-d)^{s-l}\rho_1^{l-1} \\
	 & = \sum_{l= 1}^\infty \rho_1^{l-1}(c+d)^{-T-l-1}\sum_{m = 0}^\infty \binom{m+l+T}{m} (1-c-d)^{m}(c+d)^{T+l+1} \\
	 & = \sum_{l= 1}^\infty \rho_1^{l-1}(c+d)^{-T-l-1} \\
	 & = \frac{1}{(c+d)^{T+2}(1-\rho_1/(c+d))},
\end{align*}
where in the third equality we multiplied and divided by $(c+d)^{T+l-1}$, and set $m = s-l$. It follows that
\begin{align*}
\lim_{k \to \infty} \sum_{m = 0}^{k-1} \rho_2^m S_{k-m,m}^2 & =  \frac{1}{1-\rho_2} \lim_{k \to \infty} E[X_k^2] = \frac{1}{(1-\rho_2)(c+d)^4(1-\rho_1/(c+d))^2} E\left[(c+d)^{-2T}\right]. 
\end{align*}
Now use the observation that $E[(c+d)^{-2T}] = (1-\rho_2)(1-\rho_2/(c+d)^2)^{-1}$ to obtain that
\begin{align*}
    \lim_{k \to \infty} \sum_{m = 0}^{k-1} \rho_2^m S_{k-m,m}^2 & = \frac{1}{\left(c+d- \rho_1\right)^2((c+d)^2 - \rho_2)}.
\end{align*}
The same arguments also yield:
\begin{align*} \lim_{k \to \infty} \sum_{m = 0}^{k-1} \rho_2^m U_{k-m,m}^2 & =  \frac{1}{1-\rho_2}E\left[ \lim_{k \to \infty}  \left(\sum_{s = 0}^{k-T} a_{T,s+T}\right)^21(T<  k)\right]\\
& =  \frac{1}{1-\rho_2}E\left[ \left(\sum_{s = 0}^\infty \binom{s+T}{T}(1-c-d)^s\right)^2\right]\\
& =  \frac{1}{1-\rho_2}E\left[ \left(\sum_{s = 0}^\infty \binom{s+T}{s}(1-c-d)^s\right)^2\right]\\
& =  \frac{1}{1-\rho_2}E\left[ \left((c+d)^{-T-1}\sum_{s = 0}^\infty \binom{s+T}{s}(1-c-d)^s(c+d)^{T+1}\right)^2\right]\\
& =  \frac{1}{(1-\rho_2)(c+d)^2}E\left[ (c+d)^{-2T}\right] \\
& = \frac{1}{(c+d)^2-\rho_2},\\
\lim_{k \to \infty} \sum_{m = 0}^{k-1} \rho_2^m S_{k-m,m} U_{k-m,m}& =  \frac{1}{(c+d)^3(1-\rho_2)(1-\rho_1/(c+d))} E\left[ (c+d)^{-2T}\right]\\
 & =  \frac{1}{(c+d-\rho_1)((c+d)^2-\rho_2)},
\end{align*}
and
\begin{align*}
 \lim_{k \to \infty} \sum_{m = 0}^{k-1} \rho_2^m T_{k-m,m}
& =  \frac{1}{1-\rho_2}E\left[ \lim_{k \to \infty} \sum_{s = 0}^{k-T} a_{T,s+T}^21(T < k)\right]\\
& =  \frac{1}{1-\rho_2}E\left[ \sum_{s = 0}^{\infty} \binom{s+T}{T}^2(1-c-d)^{2s}\right]\\
& =  \frac{1}{1-\rho_2}E\left[ (c+d)^{-2(T+1)}\sum_{s = 0}^{\infty} \binom{s+T}{s}^2(1-c-d)^{2s}(c+d)^{2(T+1)}\right]\\
& =  \frac{1}{1-\rho_2}E\left[ (c+d)^{-2(T+1)}p_T\right],
\end{align*}
where $p_T  = \sum_{s = 0}^{\infty} \binom{s+T}{s}^2(1-c-d)^{2s}(c+d)^{2(T+1)} \in (0,1)$ with $T$ a Geometric random variable (on $\{0, 1, 2,\dots\}$) with success probability $1-\rho_2$. Combining all the computations above, we obtain: 
\begin{align*}
\var\left(\mathcal{R}\right) &= \var\left(\sum_{i = 1}^{\mathcal{N}_1} \mathcal{C}_{(1,i)}\right)  E[\mathcal{W}_1]^2 \frac{1}{\left(c+d- \rho_1\right)^2((c+d)^2 - \rho_2)}\\
&\hspace{5mm} +E[\mathcal{V}_1] \frac{1}{1-\rho_2}E\left[ (c+d)^{-2(T+1)}p_T\right]  + \var(\mathcal{Y}_1)  \frac{1}{(c+d)^2-\rho_2} \\
&\hspace{5mm} + 2 \cov\left(\sum_{i = 1}^{\mathcal{N}_1} \mathcal{C}_{(1,i)}, Y_1\right)  E[\mathcal{W}_1] \frac{1}{(c+d-\rho_1)((c+d)^2-\rho_2)}.
\end{align*}
To compute the variance of $\mathcal{R}^*$, use the expression in Lemma~\ref{lem:Rk_MeanVariance} and take the limit as $k \to \infty$ to obtain:
\begin{align*}
\var\left(\mathcal{R}^*\right) & = \lim_{k \to \infty} \var\left(\mathcal{R}_\emptyset^{(k+1)} \right) \\
& = E[\mathcal{W}_1]^2  \var\left(  \sum_{i=1}^{\mathcal{N}_\emptyset} \mathcal{C}_{i}\right) \lim_{k \to \infty}S_{k,0}^2 + \var(\mathcal{Y}_\emptyset)\lim_{k \to \infty}U_{k,0}^2  + 2 E[\mathcal{W}_1] \cov\left( \sum_{i=1}^{\mathcal{N}_\emptyset} \mathcal{C}_{i}, \mathcal{Y}_\emptyset\right) \lim_{k \to \infty}S_{k,0} U_{k,0} \\
&\hspace{5mm} + E[\mathcal{V}_\emptyset] \lim_{k \to \infty}T_{k,0} 
 + \rho_2^*\var(\mathcal{W}_1)\lim_{k \to \infty}\rho_2^{k-1}  +  \rho_2^* E[\mathcal{W}_1]^2 \var\left(\sum_{i = 1}^{\mathcal{N}_1} \mathcal{C}_{(1,i)}\right) \lim_{k \to \infty}\sum_{m=1}^{k-1} \rho_2^{m-1} S_{k-m,m}^2 \\
&\hspace{5mm} + \rho_2^* E[\mathcal{V}_1]  \lim_{k \to \infty}\sum_{m=1}^{k-1} \rho_2^{m-1} T_{k-m,m}  +  \rho_2^* \var(\mathcal{Y}_1) \lim_{k \to \infty}\sum_{m=1}^{k-1} \rho_2^{m-1} U_{k-m,m}^2 \\
&\hspace{5mm} + 2 \rho_2^* E[\mathcal{W}_1] \cov\left(\sum_{i = 1}^{\mathcal{N}_1} \mathcal{C}_{(1,i)}, \mathcal{Y}_1\right)  \lim_{k \to \infty}\sum_{m=1}^{k-1} \rho_2^{m-1} S_{k-m,m} U_{k-m,m}.
\end{align*}
Let $T' \eqlaw T + 1$, where $T$ is a Geometric random variable (on $\{0,1,2,\dots\})$ with success probability $1-\rho_2$. Then the previous computations can be rewritten as:
\begin{align*}
\lim_{k \to \infty} \sum_{m = 1}^{k-1} \rho_2^{m-1} S_{k-m,m}^2 
 & = \frac{1}{(c+d)^2((c+d)-\rho_1)^2((c+d)^2 - \rho_2)},\\
  \lim_{k \to \infty} \sum_{m = 1}^{k-1} \rho_2^{m-1} T_{k-m,m} & =  \frac{1}{1-\rho_2}E\left[ (c+d)^{-2(T'+1)}p_{T'}\right]\\
 &  =  \frac{1}{1-\rho_2}E\left[ (c+d)^{-2(T+2)}p_{T+1}\right],\\
 \lim_{k \to \infty} \sum_{m = 1}^{k-1} \rho_2^{m-1} U_{k-m,m}^2 &  =  \frac{1}{(1-\rho_2)(c+d)^2}E\left[ (c+d)^{-2T'}\right] \\
& = \frac{1}{(c+d)^2((c+d)^2-\rho_2)},\\
 \lim_{k \to \infty} \sum_{m = 1}^{k-1} \rho_2^{m-1} S_{k-m,m} U_{k-m,m}& =  \frac{1}{(c+d)^3(1-\rho_2)(1-\rho_1/(c+d))} E\left[ (c+d)^{-2T'}\right]\\
 & =  \frac{1}{(c+d)^2(c+d-\rho_1)((c+d)^2-\rho_2)}.
\end{align*}
In addition to the above results, we also have:
\begin{align*}
\lim_{k \to \infty} S_{k,0} 
& = \frac{1}{\rho_1}\sum_{s = 1}^{\infty}\sum_{l = 1}^s \binom{s}{l} (1-c-d)^{s-l} \rho_1^{l}\\
& =  \frac{1}{\rho_1}\sum_{s = 1}^{\infty} \left(   (1-c-d + \rho_1)^{s} - (1-c-d)^s \right)\\
& =  \frac{1}{\rho_1} \left( \frac{ 1-c-d+ \rho_1}{c+ d-\rho_1}  - \frac{1-c-d}{c+d} \right)\\
& =  \frac{1}{(c+d)(c+d-\rho_1)}, \\
\lim_{k \to \infty} U_{k,0} & = \sum_{s = 0}^{\infty}  (1-c-d)^s = \frac{1}{c+d}, \text{ and}\\
\lim_{k \to \infty} T_{k,0} & = \sum_{s = 0}^{\infty}  (1-c-d)^{2s} = \frac{1}{1-(1-c-d)^2}.
\end{align*}
Combining all the limits computed above, we obtain that
\begin{align*}
\var\left(\mathcal{R}^*\right)  
&=\var\left(\sum_{i = 1}^{\mathcal{N}_\emptyset} \mathcal{C}_{i}\right)  E[\mathcal{W}_1]^2 \frac{1}{(c+d)^2(c+d-\rho_1)^2} \\
& \hspace{5mm} +  \var(\mathcal{Y}_\emptyset)  \frac{1}{(c+d)^2}\\
& \hspace{5mm}+ 2\cov\left(\sum_{i = 1}^{\mathcal{N}_\emptyset} \mathcal{C}_{i}, \mathcal{Y}_\emptyset\right)E[\mathcal{W}_1] \frac{1}{(c+d)^2(c+d-\rho_1)} \\
& \hspace{5mm} + E[\mathcal{V}_\emptyset] \left( \frac{1}{1-(1-c-d)^2} \right)\\
& \hspace{5mm} + \rho_2^* \var\left(\sum_{i = 1}^{\mathcal{N}_1} \mathcal{C}_{(1,i)}\right)E[\mathcal{W}_1]^2\frac{1}{(c+d)^2((c+d)-\rho_1)^2((c+d)^2 - \rho_2)}\\
&\hspace{5mm} + \rho_2^*E[\mathcal{V}_1] \frac{1}{1-\rho_2}E\left[ (c+d)^{-2(T+2)}p_{T+1}\right] \\
& \hspace{5mm}+ \rho_2^*\var(\mathcal{Y}_1) \frac{1}{(c+d)^2((c+d)^2-\rho_2)}\\
&\hspace{5mm} + 2 \rho_2^* \cov\left(\sum_{i = 1}^{\mathcal{N}_1} \mathcal{C}_{(1,i)}, \mathcal{Y}_1\right)   E[\mathcal{W}_1] \frac{1}{(c+d)^2(c+d-\rho_1)((c+d)^2-\rho_2)}.  \\
\end{align*}
This completes the proof. 
\end{proof}

\bibliographystyle{plain} 
\bibliography{DriftRecBib}

\end{document}